\documentclass[11pt,a4paper]{amsart}
\usepackage{graphicx}
\usepackage{amsmath,amsfonts}
\usepackage{comment}
\usepackage{amsthm,amssymb,latexsym}
\usepackage{url}
\usepackage{hyperref}
\usepackage[active]{srcltx}
\usepackage[usenames]{color}
\usepackage[utf8]{inputenc}
\usepackage{enumitem}
\usepackage[normalem]{ulem}
\setlist[enumerate]{
    leftmargin=*,
    labelsep=0pt,
    align=left
}
\usepackage{xcolor}

\newcommand{\bfs}{\boldsymbol}
\newtheorem{theorem}{Theorem}[section]

\newtheorem{lemma}[theorem]{Lemma}

\newtheorem*{fact*}{Fact}

\theoremstyle{definition}

\newtheorem*{claim}{Claim}
\newtheorem{remark}[theorem]{Remark}
\numberwithin{equation}{section}

\newcommand{\A}{\mathbb A}
\newcommand{\F}{\mathbb F}
\newcommand{\K}{\mathbb K}

\newcommand{\Pp}{\mathbb P}

\newcommand{\fq}{\F_{\hskip-0.7mm q}}

\newcommand{\cfq}{\overline{\F}_{\hskip-0.7mm q}}
\def\ifm#1#2{\relax \ifmmode#1\else#2\fi}

\newcommand{\klk}    {\ifm {,\ldots,} {$,\ldots,$}}

\begin{document}

\title{Power Sum Varieties over Finite Fields}

\author[N. Giménez]{Nardo Giménez${}^{1,2}$}

\author[J. F.  Gottig]{Juan Francisco Gottig${}^{1,2}$}

\author[M. Palacios]{Miguel Palacios${}^{1,2}$}

\author[M. Pérez]{Mariana Pérez${}^{1,2}$}

\author[M.  Privitelli]{Melina Privitelli${}^{1,2}$}

\address{${}^{1}$Universidad Nacional de Hurlingham, Laboratorio de Investigación en Geometría y Álgebra (LIGA) \\ Av. Gdor. Vergara 2222 (B1688GEZ), Villa Tesei, Buenos Aires, Argentina}
\email{\nolinkurl{{nardo.gimenez,francisco.gottig,miguelangel.palacios,
mariana.perez,melina.privitelli}@unahur.edu.ar}}
\address{${}^{2}$ Consejo Nacional de Investigaciones Científicas y Técnicas (CONICET),
Ar\-gentina}

\thanks{The authors were partially supported by the grants
PIUNAHUR$2025$ $PIU2025TG_{}{20}$}

\keywords{Finite fields, power sums, diagonal equations, $\fq$--rational solutions, estimates, Waring's problem}
\maketitle

\section*{Abstract}
We study a family of affine algebraic varieties over a finite field $\mathbb{F}_q$
defined by systems of power sum polynomials. Our main goal is to investigate their
geometric and arithmetic properties, with particular emphasis on the structure and
dimension of their singular loci. Using tools from the theory of symmetric polynomials
and generalized Vandermonde determinants, we establish explicit conditions under which
these varieties are complete intersections and prove that, in a broad range of
parameters, their singular loci have codimension at least two. As a consequence, we
derive normality and irreducibility results for both the affine varieties and their
projective closures. Building on this geometric analysis, we obtain estimates for the
number of $\mathbb{F}_q$--rational points and deduce effective existence results for
solutions of systems of symmetric power sum equations over finite fields.
Finally, we apply  these results to the study of Waring's problem and to the analysis of the cost of computing a $\mathbb{F}_q$--rational point of such systems.

\section{Introduction}

 Let $\mathbb{F}_q$ denote the finite field with $q$ elements and $\mathbb{F}_q[X_1,\ldots,X_n]$ the polynomial ring in the indeterminates $X_1,\ldots,X_n$ over $\mathbb{F}_q$.  For a given positive integer $d$ the \emph{(symmetric)  power sum polynomial} $p_{d}^{(n)}$ of order $d$ in $\mathbb{F}_q[X_1,\ldots,X_n]$ is defined by $p_{d}^{(n)}:=\sum_{i=1}^{n} X_i^{d}$. In this paper we study polynomial systems defined by power sum polynomials with coefficients in a finite field. More precisely, we consider systems of the form
\begin{equation}\label{intro: power sum eq}
  p_{d_1}^{(n)} = b_1, \ldots ,p_{d_m}^{(n)}  =b_m,
\end{equation}
with positive orders $d_1 < d_2 < \cdots < d_m$ and $b_1, \ldots, b_m$ any elements of  $\mathbb{F}_q$.

Systems defined by power sum polynomials have attracted considerable
attention in recent years, mainly in connection with problems on regular sequences, symmetric
polynomial ideals, and complete intersections. For systems defined over
algebraically closed fields of characteristic zero, significant progress has
been achieved by Conca et al.~\cite{CSS2024}, who established sharp conditions
under which families of power sum polynomials define complete intersections,
normal domains, and unique factorization domains.
However, in  positive characteristic, systems defined by power
sum polynomials has been less studied. Although several works address special cases like systems consisting of one or two equations,
or low-dimensional settings, results on geometric and arithmetic properties of systems involving
three or more power sum equations over finite fields remain limited.
For instance,
in \cite{Cao2016} the authors study systems of power sum equations in the square
case, when the number of equations equals the number of variables. From an
arithmetic perspective, explicit existence results of $\mathbb{F}_q$-rational points and estimates of their number are known only in a few specific situations,
the most classical being Fermat varieties, corresponding to diagonal
hypersurfaces, which have been extensively studied using character sum
techniques (see, for example, \cite{Weil1949, IrelandRosen1990}).

The first objective of this paper is to extend some of the known results for power sum systems
established in characteristic zero to the finite field setting.

Our approach relies on tools from the theory of symmetric polynomials and generalized
Vandermonde determinants in combination with techniques from algebraic geometry.

Our first main result (see, Theorems \ref{teo: V es interseccion completa}, \ref{teo:props_clausura_infinito} and \ref{teo: pcl V interseccion completa}) provides explicit conditions under which collections of power sum polynomials form regular sequences and define complete
intersections.

\begin{theorem}\label{thm:intro-geom} Let $m, n$ be integers with $2\le m\le n$. Let $p_{d_1}^{(n)},\ldots,p_{d_m}^{(n)}$ be the symmetric power sum polynomials of positive orders $d_1 < \cdots < d_m$ and $b_1 \klk b_m$ any elements in $\fq$. Set $G:=d_m-d_1-m+1$. Assume that the characteristic $p$ of $\mathbb{F}_q$ does not divide any $d_i$ for $1\leq i \leq m$. Then:
\begin{enumerate}
\item[i)] If $n \ge 2m+G-1$, then $p_{d_1}^{(n)}-b_1,\ldots,p_{d_m}^{(n)}-b_m$ is a regular sequence in
$\mathbb{F}_q[X_1,\ldots,X_n]$.
\item[ii)] If $n \ge 2m+G$, then the ideal $(p_{d_1}^{(n)}-b_1,\ldots,p_{d_m}^{(n)}-b_m)$ is radical.
\item[iii)] {If further $p>n-m-G+2$, then for $m\geq 3$ with  $n \ge 2m+G$ the polynomials $p_{d_1}^{(n)}-b_1X_0^{d_1},\ldots,p_{d_m}^{(n)}-b_mX_0^{d_m}$ define a normal complete intersection variety of degree $d_1\cdots d_m$ in $\mathbb{P}^{n}$. Moreover, $p_{d_1}^{(n)},\ldots,p_{d_m}^{(n)}$ define a normal complete intersection variety in $\mathbb{P}^{n-1}$.}
\end{enumerate}
\end{theorem}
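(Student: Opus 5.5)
The plan is to study the singular locus of the affine variety $V:=V(p_{d_1}^{(n)}-b_1,\ldots,p_{d_m}^{(n)}-b_m)\subset\A^n$ through its Jacobian. Since $p\nmid d_i$, the Jacobian matrix is, up to the invertible diagonal factor $\mathrm{diag}(d_1,\ldots,d_m)$, the $m\times n$ matrix $(X_j^{d_i-1})_{i,j}$. Its $m\times m$ minors are generalized Vandermonde determinants in $X_{j_1},\ldots,X_{j_m}$. Each factors as $\prod_{k<l}(X_{j_l}-X_{j_k})$ times a power of the product of the variables, times a Schur polynomial $s_\lambda$. The partition $\lambda$ has parts bounded by $d_m-d_1-m+1=G$.

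\textbf{Step 1 (rank deficiency forces coincidences).} Let $x\in\cfq^{\,n}$ be a point where the Jacobian has rank $<m$. Group the coordinates into blocks of equal nonzero values, and note that zero coordinates contribute zero columns. For distinct nonzero values $a_1,\ldots,a_r$, the columns $(a_k^{d_i-1})_i$ are independent once $r\le m$ and the relevant Schur factor does not vanish. I would instead use the cleaner bound: the linear span of the columns $(a^{d_i-1})_i$ is controlled by a polynomial of the form $\sum_i c_i T^{d_i-1}$ with at most $m$ monomials. Such a polynomial has at most $d_m-d_1$ nonzero roots, but that bound is too crude. The sharper statement I would aim for is that rank $<m$ forces the number of distinct nonzero coordinate values to be at most $m-1+G$, counted with the help of the Schur factor. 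In particular, a singular point has many repeated coordinates. Hence the singular locus of $V$ is contained in a finite union of linear subspaces $L$ defined by at least $n-(m-1+G)-\cdots$ equalities $X_j=X_k$ or $X_j=0$, intersected with $V$.

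\textbf{Step 2 (dimension counts).} Restricted to such a subspace $L$ of dimension $t\le m-1+G$, the equations become weighted power sums in $t$ variables. A dimension argument (projection, or $V\cap L$ being defined by $m$ equations on a $t$-space) gives $\dim(\mathrm{Sing}\,V)\le m-1+G-\ldots$. First I need regularity, for i): for an ideal generated by $m$ elements, every component of $V$ has dimension $\ge n-m$. If the Jacobian has full rank on a dense subset of every component, then all components have dimension exactly $n-m$. Equality then follows from $\dim \mathrm{Sing}\le m-1+G<n-m$, i.e.\ $n\ge 2m+G$. For i) I would also consider homogeneous leading forms: the sequence is regular iff $p_{d_1},\ldots,p_{d_m}$ is, i.e.\ iff $V(p_{d_1},\ldots,p_{d_m})$ has dimension $n-m$. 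That cone is preserved by scaling, which saves one dimension and gives $n\ge 2m+G-1$. Part ii) then follows: a complete intersection that is generically smooth on every component (singular locus of codimension $\ge1$) is reduced by Serre's $R_0+S_1$ criterion, so the ideal is radical.

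\textbf{Step 3 (normality, part iii).} For the projective closure $\mathrm{pcl}(V)\subset\Pp^n$ given by the homogenized equations, I would show $\mathrm{codim}\,\mathrm{Sing}\ge2$ on both the affine part and at infinity. The part at infinity is $V(p_{d_1},\ldots,p_{d_m})\subset\Pp^{n-1}$, handled by the same analysis plus the cone reduction. The hypothesis $p>n-m-G+2$ is used here. In the coincidence analysis, a block of $k$ equal coordinates contributes the column multiplied by $k$. We need $k\not\equiv0\pmod p$ for all block sizes that can arise, so that blocks do not disappear in characteristic $p$ and the Step 1 count stays valid. The number of equations $m\ge3$ gives the extra codimension. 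Then Serre's criterion ($S_2$ from complete intersection, $R_1$ from codimension-two singular locus) yields normality. The degree $d_1\cdots d_m$ follows from Bézout, since the intersection has no component at infinity: its dimension there is $n-m-1$.

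\textbf{Main obstacle.} The hardest step is Step 1: controlling the vanishing of all maximal minors, specifically of the Schur factors $s_\lambda$ in positive characteristic. This is what yields the bound ``at most $m-1+G$ distinct nonzero values'' on the rank-deficient locus. I would first try induction on $m$ via Laplace expansion. Alternatively, I would use the observation that a vector in the left kernel gives a polynomial $\sum c_iT^{d_i-1}$ vanishing at all distinct coordinate values. Dividing by $T^{d_1-1}$ leaves degree $d_m-d_1=G+m-1$, which bounds the number of distinct nonzero roots by $G+m-1$ directly. This second route seems the most promising.
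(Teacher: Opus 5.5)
\textbf{Parts (i) and (ii): correct, by a different and simpler route.} Your ``second route'' in Step~1 replaces the paper's key lemma with a much more elementary argument. The paper proves Lemma~\ref{cada m+M} (among any $m+G$ nonzero coordinates of a point of $C$, two coincide) in two stages. First it factors the $m\times m$ minors by De Marchi's formula. Then it proves, by an induction driven by Lemma~\ref{lowering the degree}, that $\det\bigl(e^{(m-1)}_{\lambda'_i-i+j-1}\bigr)$ cannot vanish at every $(m-1)$-subsequence of $m+G$ distinct nonzero values.

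Your argument needs one line. Take a nonzero left-kernel vector $c$ of $\mathrm{J}(f_1,\ldots,f_m)(\bfs{x})$, divide by $a_j\neq 0$ and use $p\nmid d_i$. This gives a nonzero $Q(T)=\sum_i c_i d_i T^{d_i-d_1}$ of degree at most $d_m-d_1=m+G-1$ that vanishes at every nonzero coordinate. So your remark that this bound is ``too crude'' is mistaken: it is exactly the bound needed. Grouping coordinates into a zero block and at most $m+G-1$ blocks of equal nonzero values then gives Lemma~\ref{M=0} at once. The argument also works verbatim for arbitrary nonzero weights $a_i$.

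One correction for (i). The threshold $n\ge 2m+G-1$ does not come from ``scaling''. It comes from the dichotomy the paper uses: a component $V'$ of $V(p^{(n)}_{d_1},\ldots,p^{(n)}_{d_m})$ either contains a point of Jacobian rank $m$, so $\dim V'\le n-m$, or lies in the rank-deficient locus, so $\dim V'\le m+G-1\le n-m$. Only a non-strict inequality is needed. Lemmas~\ref{lemma: reg seq and complete int} and~\ref{lemma: comp hom de mayor grado suc reg} then finish; only the ``if'' direction of your ``iff'' holds, and only that direction is used.

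\textbf{Part (iii): a genuine gap.} At $n=2m+G$ one has $\dim\mathcal V=m+G$. So $\dim C\le m+G-1$ gives codimension only $1$, which is $R_0$ but not $R_1$. Step~3 never supplies the missing dimension.
\begin{itemize}
\item The hypothesis $p>n-m-G+2$ is not needed to keep your Step~1 count valid: that count is characteristic-free once $p\nmid d_i$.
\item Nor does the hypothesis make every block size that can arise prime to $p$. It only bounds block sizes on the pieces of maximal dimension $m+G-1$.
\item ``$m\ge 3$ gives the extra codimension'' is asserted without any mechanism.
\end{itemize}
The missing idea is to use the \emph{equations} $p^{(n)}_{d_j}=b_j$ on each linear piece, not only the rank condition.

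\textbf{How the paper fills this, and a lighter alternative.} The paper does this in Theorem~\ref{dimension singular V M=0}. On a piece $\mathcal T_\alpha$ the system becomes a weighted power-sum system in $m+G-1$ variables with weights $k\cdot 1$, where $1\le k\le n-m-G+2$. These weights are nonzero precisely because $p>n-m-G+2$. Applying part (i) for nonzero weights to the first $\lfloor m/2\rfloor$ equations gives $\dim C\le\lceil m/2\rceil+G-1\le\frac{m-1}{2}+G$. The hypothesis $m\ge3$ is what turns this into codimension $\ge 2$ in Theorems~\ref{teo:props_clausura_infinito} and~\ref{teo: pcl V interseccion completa}.

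For (iii) alone, a lighter form of the same idea suffices. Only pieces with exactly $t=m+G-1$ blocks $B_1,\ldots,B_t$ of equal nonzero coordinates (common values $y_1,\ldots,y_t$) are dangerous. On such a piece every block has size $|B_k|\le n-t+1=n-m-G+2<p$. Hence $f_1$ restricts to the nonconstant polynomial $\sum_k |B_k|\,y_k^{d_1}-b_1$, which cuts the piece down to dimension $\le m+G-2$.

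Let $C'$ denote the rank-deficient locus of $V_{\A^n}(p^{(n)}_{d_1},\ldots,p^{(n)}_{d_m})$. The same argument with $b=0$ gives $\dim C,\dim C'\le m+G-2\le n-m-2$. Two further facts then give codimension $\ge2$ for both $V_{\Pp^n}(f_1^h,\ldots,f_m^h)$ and $\mathcal V^\infty$:
\begin{itemize}
\item on $\{X_0\neq 0\}$, the Euler relation shows the rank of $\mathrm{J}(f_1^h,\ldots,f_m^h)$ equals that of $\mathrm{J}(f_1,\ldots,f_m)$;
\item the cone over the rank-deficient locus at infinity is contained in $\{0\}\times C'$.
\end{itemize}
This bound is weaker than the paper's $\frac{m-1}{2}+G$, and the paper's sharper bound is the one needed later for the point count.
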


As a consequence, under suitable conditions on the number of variables and the
degrees involved, power sum varieties over finite fields share structural
features analogous to those observed in characteristic zero, such as being
complete intersections whose singular loci have codimension at least two.

Furthermore, this geometric and algebraic information is also relevant from a computational perspective. Properties of a polynomial system, such as defining a complete intersection, generating a radical ideal and the absolute irreducibility of the underlying variety, provide suitable conditions for efficiently solving polynomial systems and searching for rational points using available algorithms (see, e.g, [1]).

The same geometric framework also underlies our arithmetic analysis of power-sum varieties. In particular, we derive explicit estimates for the number of $\mathbb{F}_q$--rational points, which lead to effective existence results for solutions over finite fields. See Theorem \ref{prop: existencia} for details.

\begin{theorem}\label{thm:intro-existence}
With hypotheses as in Theorem \ref{thm:intro-geom},
if the number $n$ of variables is sufficiently large with respect to a fixed \(m\) and fixed degrees \(d_1,\ldots,d_m\), then the existence of an \(\mathbb F_q\)--rational point is guaranteed whenever
\(
q>(3+md_m)^2.
\)
\end{theorem}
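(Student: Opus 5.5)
The plan is to combine the geometric information of Theorem \ref{thm:intro-geom} with an explicit point-counting estimate for normal complete intersections. The hypotheses guarantee that $p\nmid d_i$, that $n\ge 2m+G$, and that $p>n-m-G+2$. Let $V\subset\A^n$ be the affine variety defined by $p_{d_1}^{(n)}-b_1,\ldots,p_{d_m}^{(n)}-b_m$, and let $\mathrm{pcl}(V)\subset\Pp^n$ be its projective closure. By Theorem \ref{thm:intro-geom} iii), $\mathrm{pcl}(V)$ is a normal complete intersection of dimension $r:=n-m$ and degree $\delta:=d_1\cdots d_m$, defined over $\fq$, and its singular locus has codimension at least two. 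The hyperplane section at infinity $\mathrm{pcl}(V)\cap\{X_0=0\}$ is the variety in $\Pp^{n-1}$ defined by $p_{d_1}^{(n)},\ldots,p_{d_m}^{(n)}$, which is also a normal complete intersection, of dimension $r-1$. Normality together with the complete intersection property gives absolute irreducibility of both (Hartshorne's connectedness). Their singular loci have dimension at most $r-2$ and $r-3$, respectively.

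Next I will invoke the estimate of Cafure--Matera / Ghorpade--Lachaud type for a normal (hence absolutely irreducible) complete intersection $W\subset\Pp^N$ of dimension $r$, multidegree $(d_1,\ldots,d_m)$ and singular locus of dimension at most $s$:
\[
\bigl||W(\fq)|-p_r\bigr|\le \bigl(b'_{r-s-1}(N,\boldsymbol d)+C\bigr)q^{(r+s+1)/2},
\]
where $p_r=q^r+\cdots+1$ and $b'$ is a primitive Betti number bounded by an explicit expression in $\delta$, $d_m$ and $N$. I apply this bound to $\mathrm{pcl}(V)$ with $s=r-2$, and to the part at infinity with dimension $r-1$ and $s=r-3$. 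Subtracting the two counts gives
\[
\bigl||V(\fq)|-q^{r}\bigr|\le C_1\,q^{r-1/2}+C_2\,q^{r-1},
\]
with $C_1,C_2$ explicit in terms of $\delta$, $d_m$, $m$ and $n$. In the version with $s\le r-2$ the leading error constant is governed only by the Betti numbers in a fixed middle range. The key point is then that, when $n$ is large with respect to $m$ and $d_1,\ldots,d_m$, I can use the sharper bound of Cafure--Matera--Privitelli type, in which the constant $C_1$ is independent of $n$ and bounded by something like $(md_m+1)^{\cdots}$. I would aim for a final inequality of the shape
\[
\bigl||V(\fq)|-q^r\bigr|\le (3+md_m)\,q^{r-1/2}+\text{lower order}.
\]

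Positivity of $|V(\fq)|$ then follows once $q^{1/2}>3+md_m$, with lower order terms absorbed because $n$ is large. This is exactly $q>(3+md_m)^2$.

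The main obstacle is obtaining an error constant independent of $n$. Standard Ghorpade--Lachaud bounds involve $\binom{N+1}{r}$-type quantities and $(d_m+1)^N$ factors that grow with $n$. To get around this, I would first try a Lefschetz-type argument. Since the singular locus has codimension at least two and $\mathrm{pcl}(V)$ is a complete intersection, its cohomology agrees with that of projective space outside a small range of degrees. That leaves only the degrees $r$ and $r+1$, whose contributions are controlled by the local cohomology of the singularities and can be bounded via an explicit Katz-type bound. If this fails, I would fall back on restricting to a generic linear section of fixed dimension. Bertini preserves the normality and the complete intersection property, so I would reduce to a variety whose dimension depends only on $m$ and then apply the general estimate there. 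This accounts for the hypothesis that $n$ be sufficiently large: it makes the $q^{r-1}$ terms negligible relative to the leading deviation.
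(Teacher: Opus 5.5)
Your argument has a genuine gap. The only geometric input you use is that the singular loci have codimension at least two, and with that input the estimate \eqref{eq: estimacion de Ghorpade Lachaud} cannot yield the threshold $(3+md_m)^2$. Taking $s=r-2$ there gives $\bigl||\mathrm{pcl}(\mathcal V)(\fq)|-p_r\bigr|\le b_1'(m+1,\bfs d)\,q^{r-1/2}+C_{r-2}(\mathrm{pcl}(\mathcal V))\,q^{r-1}$.
\begin{itemize}
\item The first constant is independent of $n$. It is the first Betti number $2g$ of a smooth complete intersection curve of multidegree $\bfs d$ in $\Pp^{m+1}$, with $2g-2=\delta(\sum_i d_i-m-2)$ and $\delta=d_1\cdots d_m$, so it is not $3+md_m$.
\item The second constant is only controlled by \eqref{constante C}, i.e.\ by $9\cdot 2^{m}(md_m+3)^{n+1}$. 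Positivity therefore forces roughly $q>(md_m+3)^{n+1}$, which gets \emph{worse} as $n$ grows.
\item Your claim that large $n$ makes the $q^{r-1}$ terms negligible is backwards: the ratio $q^{r-1}/q^{r-1/2}=q^{-1/2}$ does not depend on $n$, while the constants grow exponentially in $n$.
\item The target inequality with $(3+md_m)\,q^{r-1/2}$ is reverse-engineered from the conclusion, and no estimate you cite supplies it. Cafure--Matera type bounds have leading constants such as $(\delta-1)(\delta-2)$, which give thresholds polynomial in $\delta$.
\item The Lefschetz step is wrong as stated. For a complete intersection with $\dim\mathrm{Sing}\le s$, the cohomology can differ from that of $\Pp^r$ in every degree $r\le i\le r+s+1$. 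For $s=r-2$ this is the whole range $r,\ldots,2r-1$, not just $r,r+1$.
\item A linear section of fixed dimension would give a threshold depending on that dimension and on $\bfs d$, after an $\fq$-rational Bertini argument. Again this is not $(3+md_m)^2$.
\end{itemize}

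The missing idea is that the singular locus has dimension bounded \emph{independently of $n$}. Theorems \ref{dimension singular V M=0}, \ref{teo:props_clausura_infinito} and \ref{teo: pcl V interseccion completa} give $\dim\mathrm{Sing}(\mathrm{pcl}(\mathcal V))\le s:=\lfloor\frac{m-1}{2}\rfloor+G$ and $\dim\mathrm{Sing}(\mathcal V^\infty)\le s-1$, so the codimension of the singular locus grows linearly in $n$. Plugging these into \eqref{eq: estimacion de Ghorpade Lachaud} for $\mathrm{pcl}(\mathcal V)$ and $\mathcal V^\infty$ and subtracting gives Theorem \ref{teo: estimacion ptos racionales V}: $\bigl||\mathcal V(\fq)|-q^{n-m}\bigr|\le 27\cdot 2^{m-1}(3+md_m)^{n+1}q^{(n-m+s+1)/2}$. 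Here $3+md_m$ is the base $(N-r)d+3$ of the exponential-in-$n$ constant in \eqref{constante C}. Crucially, the power saving $q^{(n-m-s-1)/2}$ over the main term is also exponential in $n$, so it can beat that constant. Hence $|\mathcal V(\fq)|>0$ once $q>\bigl(27\cdot 2^{m-1}(3+md_m)^{n+1}\bigr)^{2/(n-m-s-1)}$, and this threshold tends to $(3+md_m)^2$ as $n\to\infty$ with $m$ and $\bfs d$ fixed. That limit is the precise content of the statement. The threshold decreases to $(3+md_m)^2$ from above, so for a given $q>(3+md_m)^2$ the required $n$ depends on $q$.
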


These results complement and extend our previous work  \cite{GottigPerezPrivitelli2024}. In that paper, an estimate for the number of
rational solutions of certain systems of diagonal equations with
general exponents was developed, with applications to the moment subset sum
problem. The present work strengthens and extends that analysis in
three directions: we allow arbitrary, possibly nonconsecutive
exponents through the parameter $G$; we give a proof, based
on symmetric polynomials and generalized Vandermonde determinants,
that the associated varieties are normal complete intersections
under explicit hypotheses; and we improve the exponent of $q$ in the
point-counting estimate from $q^{n/2}$ to
$q^{n/2 - (m-1-2G)/4}$, yielding a genuine power saving whenever
$2G < m-1$.

More recently, Podestá and Videla \cite{PV2026} considered monic diagonal systems with arbitrary numbers of equations and variables and distinct exponents. Their approach is fundamentally different from ours: they translate the counting problem into graph-theoretic and character-sum problems, expressing the number of solutions in terms of walks, adjacency matrices, and spectra of associated diagonal generalized Paley graphs, as well as in terms of Weil sums. For general systems, these identities provide exact representations of the number of solutions, but not closed explicit formulas, since their evaluation still requires the computation of spectral data of the associated graphs or of the corresponding Weil sums, which may itself be difficult. Explicit counting formulas are obtained only for special families, in concrete, they study monic homogeneous Hermitian-form diagonal systems. Their Weil-sum estimate, moreover, requires the non-degeneracy of every polynomial \(f_\alpha(T)=\alpha_1T^{d_1}+\cdots+\alpha_mT^{d_m}\), a condition that may be nontrivial to verify in practice. Thus, while their work is primarily arithmetic and graph-theoretic, it does not address the algebraic-geometric structure of the varieties defined by these systems, which is the main focus of this paper.

The paper is organized as follows. In Section~2 we recall the basic notions and results
from algebraic geometry and symmetric polynomials that will be used throughout the
paper. In Section~3 we study the geometric properties of power sum varieties over finite
fields, including bounds on the dimension of their singular loci, the complete intersection
property, and the geometry of their projective closures. In Section~4 we combine these
geometric results with point-counting techniques to derive existence results for
$\mathbb{F}_q$--rational solutions of systems of power sum equations. In Section 5 we study the application to the Generalized
Waring's problem over finite fields. Finally, in Section 6 we discuss the cost of computing a rational solution of system \eqref{intro: power sum eq} by means of the algorithm of \cite{CaMa06b}.

\section{Basic notions and notations}

In this section we collect the basic definitions and facts of
algebraic geometry we need for our results. We use standard
notions and notations which can be found in, e.g., \cite{Kunz85} or
\cite{Shafarevich94}.

Let $\K$ be any of the fields $\fq$ or $\cfq$, the closure of $\fq$. We denote by $\A^n$
the affine $n$--dimensional space $\cfq{\!}^{n}$ and by $\Pp^n$ the
projective $n$--dimensional space over $\cfq$.

{A subset $V\subset \A^n$ is an {\em affine variety of
	$\A^n$ defined over} $\K$ (or an affine $\K$--variety for short) if it is the set of
common zeros in $\A^n$ of polynomials $F_1,\ldots, F_{m}$ in $\K[X_1,\ldots, X_n]$.
A subset $V\subset \Pp^n$ is a {\em projective variety defined over}
$\K$ (or a projective $\K$--variety for short) if it is the set of
common zeros in $\Pp^n$ of homogeneous polynomials $F_1,\ldots, F_m
$ in $\K[X_0 ,\ldots, X_n]$. In both cases we say that $V$ is \emph{defined} by the polynomials $F_1,\ldots, F_m$. We shall denote by
$\{F_1=0,\ldots, F_m=0\}$ or $V(F_1,\ldots,F_m)$ the affine or
projective $\K$--variety consisting of the common zeros of
$F_1,\ldots, F_m$.  Both spaces $\A^n$ and  $\Pp^n$  are endowed with their respective Zariski topologies over $\K$, for
which the closed sets are the affine or the projective $\K$--varieties respectively.  We think a projective or affine $\K$--variety
to be endowed with the induced Zariski topology.}

In the remaining part of this section, unless otherwise stated, all
results referring to varieties in general should be understood as
valid for both projective and affine varieties.

A $\K$--variety $V$ is $\K$--{\em irreducible} (or {\em irreducible} for short if it cannot be expressed
as a finite union of proper $\K$--subvarieties of $V$. Further, $V$
is {\em absolutely irreducible} if it is $\cfq$--irreducible as a
$\cfq$--variety. Any $\K$--variety $V$ can be expressed as an
irredundant union $V=\mathcal{C}_1\cup \cdots\cup\mathcal{C}_s$ of
irreducible $\K$--varieties, unique up to
reordering, called the {\em irreducible} $\K$--{\em components} of $V$.

For a $\K$--variety $V$ contained in $\Pp^n$ or $\A^n$, its {\em
	defining ideal} $I(V)$ is the set of polynomials of $\K[X_0,\ldots,
X_n]$, or of $\K[X_1,\ldots, X_n]$, vanishing on $V$. The {\em
	coordinate ring} $\K[V]$ of $V$ is the quotient ring
$\K[X_0,\ldots,X_n]/I(V)$ or $\K[X_1,\ldots,X_n]/I(V)$. The {\em
	dimension} $\dim V$ of $V$ is the length $r$ of a longest chain
$V_0\varsubsetneq V_1 \varsubsetneq\cdots \varsubsetneq V_{r}$ of
nonempty irreducible $\K$--varieties contained in $V$.
We say that $V$ has {\em pure dimension} $r$ if every irreducible
$\K$--component of $V$ has dimension $r$ furthermore, if $W$ is a subvariety of $V$, then the number $\dim V - \dim W$ is
called the {\it{codimension}} of $W$ in $V$.
A $\K$--variety of $\Pp^n$
or $\A^n$ of pure dimension $n-1$ is called a $\K$--{\em
	hypersurface}. A $\K$--hypersurface of $\Pp^n$ (or $\A^n$) can also
be described as the set of zeros of a single nonzero polynomial of
$\K[X_0,\ldots, X_n]$ (or of $\K[X_1,\ldots, X_n]$).

Consider the embedding of $\A^n$ into the projective space $\Pp^n$
which assigns to any $\bfs{x}:=(x_1,\ldots, x_n)\in\A^n$ the point
$(1:x_1:\dots:x_n)\in\Pp^n$. Then, if $V$ is an affine variety in $\A^n$, the projective closure
$\mathrm{pcl}(V)\subset\Pp^n$ of the image of $V$ under this
embedding in the Zariski topology of $\Pp^n$ is called the
projective closure of $V$. {It holds that the irreducible components of $\mathrm{pcl}(V)$ are precisely the closures of the irreducible components of $V$  and that $V$ and $\mathrm{pcl}(V)$ have the same dimension (see, e.g., \cite[Propositions
I.5.17 and II.4.1]{Kunz85}). The points of $\mathrm{pcl}(V)$ lying
in the hyperplane $\{X_0=0\}$ are called the points of
$V$ at infinity. The projective variety
\[
V^{\infty} := \mathrm{pcl}(V) \cap \{X_0 = 0\}  \subset \mathbb{P}^{n-1}
\]
 is called \emph{the locus of $V$ at infinity}. The \emph{affine cone} $\widetilde{W}$ of a projective variety $W\subset \Pp^n$ is the set of all $(x_0 \klk x_n)\in \A^{n+1}$ that occur as homogeneous coordinate systems of a point of $W$, together with the point $(0 \klk 0)\in \A^{n+1}$. $\widetilde{W}$ is an affine variety of $\A^{n+1}$ with $\dim \widetilde{W}=\dim W+1$ (see, e.g., \cite[Proposition II.4.4]{Kunz85}).}

Let $V\subset\A^n$ be a $\K$--variety, $I(V)\subset \K[X_1,\ldots,
X_n]$ its defining ideal and $\boldsymbol{x}$ a point of $V$. The {\em dimension}
$\dim_{\boldsymbol{x}}V$ {\em of} $V$ {\em at} $\boldsymbol{x}$ is the maximum of the dimensions
of the irreducible $\K$--components of $V$ containing $\boldsymbol{x}$. Let $\mathrm{J}(F_1\klk F_m)(\boldsymbol{x})$ denote the Jacobian matrix of $F_1,\ldots, F_m$
with respect to $X_1,\ldots, X_n$ at $\boldsymbol{x}$, namely the matrix $(\partial
F_i/\partial X_j)(\boldsymbol{x})_{1\le i\le m,1\le j\le n}$. If
$I(V)=(F_1,\ldots, F_m)$, the {\em tangent space} ${T}_xV$
to $V$ at $\boldsymbol{x}$ is the kernel of the Jacobian matrix $\mathrm{J}(F_1\klk F_m)(\boldsymbol{x})$. We have
$\dim{T}_{\boldsymbol{x}}V\ge \dim_{\boldsymbol{x}}V$ (see, e.g., \cite[page
94]{Shafarevich94}). The point $\boldsymbol{x}$ is {\em regular} if
$\dim{T}_{\boldsymbol{x}}V=\dim_{\boldsymbol{x}}V$; otherwise, $\boldsymbol{x}$ is called {\em
	singular}. The set of singular points of $V$, denoted $\mathrm{Sing}(V)$, is the {\em singular
	locus} of $V$; it is a closed $\K$--subvariety of
$V$. A variety is called {\em nonsingular} if its singular locus is
empty. For projective varieties, the concepts of tangent space,
regular and singular point can be defined by considering an affine
neighborhood of the point under consideration.

{The {\em degree} $\deg V$ of an irreducible $\K$--variety $V$ is the
maximum of $|V\cap L|$, considering all the linear spaces $L$ of
codimension $\dim V$ such that $|V\cap L|<\infty$. More generally,
following \cite{Heintz83} (see also \cite{Fulton84}), if
$V=\mathcal{C}_1\cup\cdots\cup \mathcal{C}_s$ is the decomposition
of $V$ into irreducible $\K$--components, we define the degree of
$V$ as
$$\deg V:=\sum_{i=1}^s\deg \mathcal{C}_i.$$
The degree of a $\K$--hypersurface $V$ is the degree of a polynomial
of minimal degree defining $V$. Furthermore, if $V$ is an affine variety in $\A^n$, its projective closure $\mathrm{pcl}
(V)$ has  degree equal to $\deg V$ (see, e.g.,
\cite[Proposition 1.11]{CaGaHe91}).}

{Finally, we have the following {\em B\'ezout inequality} (see
\cite{Fulton84, Heintz83, Vogel84}): if $V$ and $W$
are $\K$--varieties of the same ambient space, then
\begin{equation}\label{eq: Bezout}
\deg (V\cap W)\le \deg V \cdot \deg W.
\end{equation}

For a projective variety $V\subset\Pp^n$ or an affine variety
$V\subset\A^n$, we denote by $V(\fq)$ the set of $\fq$--rational
points of $V$, namely $V(\fq):=V\cap \Pp^n(\fq)$ in the projective
case and $V(\fq):=V\cap \A^n(\fq)$ in the affine case.

\subsection{Complete intersections}\label{subsec: complete intersections}
Let $V$ be a $d$-dimensional $\K$-variety in the affine space $\A^n$ or the projective space $\Pp^n$. $V$ is called an \emph{ideal–theoretic complete intersection} (complete intersection for short) if its vanishing ideal $I(V)$ is generated by $n-d$ polynomials in $\K[X_1, \ldots, X_n]$ in the affine case or $n-d$ homogeneous polynomials in $\mathbb{K}[X_0,\ldots,X_n]$ in the projective case. On the other hand, $V$ is called a \emph{set-theoretic complete intersection} if $V$ is the intersection of $n-d$ $\K$-hypersurfaces.

A sequence of polynomials $F_1,\ldots, F_m$ in $\mathbb{K}[X_1,\ldots,X_n]$ is called a \emph{regular sequence} if the ideal $(F_1, \ldots, F_m)$ they generate in $\K[X_1, \ldots,X_n]$ is a proper ideal, and for $1 \leq i \leq m$,  $F_i$ is  not a zero divisor in the quotient
ring $\mathbb{K}[X_1,\ldots,X_n]/ (F_1,\ldots,F_{i-1})$. If in addition, $(F_1,\ldots, F_s)$ is a radical ideal for all $1\le s \le m$, then $F_1,\ldots, F_m$ is called a \emph{reduced regular sequence}.

A regular sequence of polynomials $F_1,\ldots, F_m$ in $\mathbb{K}[X_1,\ldots,X_n]$ (or homogeneous polynomials in $\mathbb{K}[X_0, \ldots, X_n]$) define  a set-theoretic complete intersection of $\A^n$ (respectively of $\Pp^n$) of dimension $n-m$.

In what follows we
shall use the following result (see, e.g., \cite[Chapitre 3,
Remarque 2.2]{Lejeune84}).
\begin{lemma}\label{lemma: reg seq and complete int}
If $F_1 \klk F_m$ are  $m\le n$ homogeneous polynomials in $\K[X_1
\klk X_n]$, the following conditions  are equivalent:
   \begin{itemize}
     \item $F_1 \klk F_m$ define a set theoretic complete intersection of $\A^n$.
     \item $F_1 \klk F_m$  is a regular sequence in $\K[X_1 \klk X_n]$.
   \end{itemize}
\end{lemma}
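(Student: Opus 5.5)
The plan is to pass through the dimension theory of graded (homogeneous) ideals: for homogeneous $F_1,\ldots,F_m$, being a regular sequence and cutting out a variety of dimension $n-m$ will both be recast as a statement about the height of the ideal $I:=(F_1,\ldots,F_m)$ in $R:=\K[X_1,\ldots,X_n]$. Two preliminary points come first. Each $F_i$ is homogeneous of positive degree (a nonzero constant would make the ideal improper, and neither condition could then hold), so $I$ is contained in the irrelevant maximal ideal $\mathfrak m=(X_1,\ldots,X_n)$ and in particular is proper. By Krull's height theorem every minimal prime of $I$ has height at most $m$. Consequently $V(F_1,\ldots,F_m)\subset\A^n$ is nonempty, since it contains the origin, and each of its irreducible components has dimension at least $n-m$. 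So ``set-theoretic complete intersection'' here amounts to saying that $V(F_1,\ldots,F_m)$ has pure dimension $n-m$, or equivalently that every minimal prime of $I$ has height exactly $m$.

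For the direction regular sequence $\Rightarrow$ complete intersection, I would argue by induction on $i$ that every minimal prime of $(F_1,\ldots,F_i)$ has height $i$. Assume this for $i-1$. Since $F_i$ is a nonzerodivisor modulo $J:=(F_1,\ldots,F_{i-1})$, it lies in no associated prime of $R/J$, and in particular in no minimal prime of $J$. Any minimal prime $P$ of $(J,F_i)$ contains a minimal prime $Q$ of $J$ with $F_i\notin Q$, so $Q\subsetneq P$ and $\operatorname{ht}P\ge i$. Krull's theorem gives $\operatorname{ht}P\le i$. Passing from heights to dimensions via $\dim R/P=n-\operatorname{ht}P$, which holds because $R$ is a polynomial ring over a field, shows that $V(F_1,\ldots,F_m)$ has pure dimension $n-m$. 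This direction does not need homogeneity.

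For the converse, the key step is to localize at $\mathfrak m$. Because $I$ is homogeneous, all its associated primes are homogeneous and hence contained in $\mathfrak m$. Therefore $F_i$ is a zero divisor on $R/(F_1,\ldots,F_{i-1})$ if and only if it is one after localizing at $\mathfrak m$, and it suffices to prove that $F_1,\ldots,F_m$ is a regular sequence in the Cohen--Macaulay local ring $R_{\mathfrak m}$. Pure dimension $n-m$ of the variety says that every minimal prime of $I$ has height $m$, and all these primes lie inside $\mathfrak m$. Hence $\dim R_{\mathfrak m}/IR_{\mathfrak m}=n-m=\dim R_{\mathfrak m}-m$, and $F_1,\ldots,F_m$ is part of a system of parameters of $R_{\mathfrak m}$. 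In a Cohen--Macaulay local ring every part of a system of parameters is a regular sequence. Hence the $F_i$ form a regular sequence in $R_{\mathfrak m}$, and by the remark on associated primes also in $R$. Here homogeneity is essential: for non-homogeneous polynomials a component could avoid the origin, and the local argument at $\mathfrak m$ would not see it. I would add a remark that the order of the $F_i$ plays no role.

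The main obstacle is this last step, namely transporting regularity from the local ring $R_{\mathfrak m}$ back to $R$. The point to check carefully is that the associated primes of $R/(F_1,\ldots,F_{i-1})$ are homogeneous, hence contained in $\mathfrak m$. This is a standard fact for graded modules, and I would cite it, for instance from Bruns--Herzog, rather than reprove it.
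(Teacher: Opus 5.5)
The paper does not prove this lemma; it quotes it from Lejeune-Jalabert (Chapitre 3, Remarque 2.2). Your proposal therefore supplies what the paper leaves to the reference, and it is correct.

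The forward direction uses only Krull's height theorem and $\dim R/P=n-\operatorname{ht}P$, so it needs no homogeneity. This matches the unrestricted claim the paper makes just before the lemma.

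The converse is legitimately reduced to the Cohen--Macaulay local ring $R_{\mathfrak m}$, with $\mathfrak m=(X_1,\ldots,X_n)$. This works because the associated primes of $R/(F_1,\ldots,F_{i-1})$ are homogeneous and hence lie in $\mathfrak m$. There, $\dim R_{\mathfrak m}/IR_{\mathfrak m}=\dim R_{\mathfrak m}-m$ makes $F_1,\ldots,F_m$ part of a system of parameters, hence a regular sequence.

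Your explanation of why homogeneity is essential is slightly loose. The real obstruction in the non-homogeneous affine case is an associated prime of an intermediate quotient that lies in no maximal ideal containing $I$. For example, in $\K[x,y,z]$ the polynomials $xz,\ yz,\ z-1$ cut out the single point $(0,0,1)$. They are nevertheless not a regular sequence in this order, since $x\cdot yz\in(xz)$ while $x\notin(xz)$. The same example shows that your remark on the irrelevance of the order genuinely depends on homogeneity.

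Finally, when $\K=\fq$ you tacitly identify the $\fq$-irreducible components of $V(F_1,\ldots,F_m)\subset\A^n$ with the minimal primes of $I$ in $\fq[X_1,\ldots,X_n]$. This is fine, by the Nullstellensatz over $\cfq$ together with $I\,\cfq[X_1,\ldots,X_n]\cap\fq[X_1,\ldots,X_n]=I$. It deserves a line, because the paper defines dimension through chains of $\K$-varieties; with it, your height computations apply verbatim.
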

The following Lemma is a particular case of a more general result proved in \cite[Lemma 5.4]{MaPePr2019}.
\begin{lemma} \label{lemma: comp hom de mayor grado suc reg}
Let $F_1,\ldots,F_m \in \mathbb{K}[X_1,\ldots,X_n]$, and denote by
$F_i^{\deg F_i}$ the homogeneous component of $F_i$ of degree $\deg F_i$,
for $1\leq i\leq m$. If
$F_1^{\deg F_1},\ldots,F_m^{\deg F_m}$
form a regular sequence in $\mathbb{K}[X_1,\ldots,X_n]$, then
$F_1,\ldots,F_m$ form a regular sequence in
$\mathbb{K}[X_1,\ldots,X_n]$.
\end{lemma}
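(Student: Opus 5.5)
The plan is to reduce everything to dimension counts, using that $\K[X_1,\ldots,X_n]$ is Cohen--Macaulay. In a Cohen--Macaulay ring, a sequence $F_1,\ldots,F_m$ generating a proper ideal is regular if and only if $\mathrm{ht}(F_1,\ldots,F_i)=i$ for every $1\le i\le m$. In geometric terms, this means that for each $i$ the affine variety $V_i:=V(F_1,\ldots,F_i)\subset\A^n$ is nonempty and every irreducible component of $V_i$ has dimension exactly $n-i$. By Krull's principal ideal theorem, every component of a nonempty $V_i$ has dimension at least $n-i$. So two things remain to prove: an upper bound on the dimension of the components, and the nonemptiness of $V_m$.

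Write $F_i^{\mathrm{top}}:=F_i^{\deg F_i}$ and let $F_i^h:=X_0^{\deg F_i}F_i(X_1/X_0,\ldots,X_n/X_0)$ be the homogenizations. The key observation is $F_i^h(0,X_1,\ldots,X_n)=F_i^{\mathrm{top}}$. The hypothesis says $F_1^{\mathrm{top}},\ldots,F_i^{\mathrm{top}}$ is a regular sequence of homogeneous polynomials for every $i$. By Lemma \ref{lemma: reg seq and complete int}, the affine cone $V(F_1^{\mathrm{top}},\ldots,F_i^{\mathrm{top}})\subset\A^n$ then has dimension $n-i$. Hence the projective variety $W_i:=V(F_1^{\mathrm{top}},\ldots,F_i^{\mathrm{top}})\subset\Pp^{n-1}$ has dimension $n-1-i$, and is empty when $i=n$.

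For the upper bound, let $C$ be an irreducible component of $V_i$. Its projective closure $\mathrm{pcl}(C)\subset\Pp^n$ lies in $V(F_1^h,\ldots,F_i^h)$, so its points at infinity lie in $W_i$ (viewing $\{X_0=0\}\cong\Pp^{n-1}$). If $\dim C\ge 1$, then $\mathrm{pcl}(C)$ meets the hyperplane $\{X_0=0\}$ in a set of dimension $\dim C-1$. This gives $\dim C-1\le n-1-i$, that is, $\dim C\le n-i$. If $\dim C=0$, the bound is trivial, since $i\le m\le n$.

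For nonemptiness, the projective variety $V(F_1^h,\ldots,F_m^h)\subset\Pp^n$ is nonempty of dimension at least $n-m$, by the projective dimension theorem. Its intersection with $\{X_0=0\}$ is exactly $W_m$, which has dimension $n-1-m$ (or is empty if $m=n$). Therefore some component of $V(F_1^h,\ldots,F_m^h)$ is not contained in the hyperplane at infinity, and this component yields points with $X_0\neq0$, i.e.\ points of $V_m$. Hence each $V_i\supseteq V_m$ is nonempty, and $(F_1,\ldots,F_m)$ is proper.

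Combining the two bounds, every component of $V_i$ has dimension $n-i$, so $\mathrm{ht}(F_1,\ldots,F_i)=i$ for all $i$, and regularity follows from unmixedness in the Cohen--Macaulay ring. The step needing most care is the upper bound: one has to justify that the points at infinity of $\mathrm{pcl}(C)$ form a set of dimension exactly $\dim C-1$, which follows since a hyperplane section of an irreducible projective variety of positive dimension has pure codimension one and $\mathrm{pcl}(C)\not\subset\{X_0=0\}$. One must also make sure the argument applies over $\cfq$ even when $\K=\fq$; this is fine because regularity is preserved under the faithfully flat extension $\K\to\cfq$.
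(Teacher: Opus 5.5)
Your proof is correct. The paper gives no argument of its own for this lemma; it only cites it as a special case of \cite[Lemma 5.4]{MaPePr2019}. What you provide is therefore a self-contained proof where the paper has a reference.

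Your route is dimension-theoretic, and each step holds:
\begin{itemize}
\item Krull's theorem gives the lower bound $n-i$ on the components of $V_i$.
\item For the upper bound, you use $\mathrm{pcl}(C)\cap\{X_0=0\}\subset W_i$, together with the fact that a hyperplane not containing the irreducible variety $\mathrm{pcl}(C)$ cuts it in dimension $\dim C-1$.
\item The projective dimension theorem applied to $F_1^h,\ldots,F_m^h$ gives properness of $(F_1,\ldots,F_m)$.
\item Unmixedness in the Cohen--Macaulay ring $\cfq[X_1,\ldots,X_n]$, plus faithful flatness of $\K[X]\to\cfq[X]$, converts the height conditions into regularity over $\K$.
\end{itemize}
The edge case $i=n$ is handled correctly: there $W_n=\emptyset$ forces the components of $V_n$ to be points. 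The inequality $m\le n$ that you use is automatic, since the top components generate an ideal of height $m$. The mechanism is the same one the paper uses later in Theorems \ref{teo:props_clausura_infinito} and \ref{teo: pcl V interseccion completa}, where the locus at infinity controls the dimension of the affine variety.

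A more algebraic alternative avoids both $\cfq$ and dimension theory. It is in the spirit of the first paragraph of the proof of Theorem \ref{teo: pcl V interseccion completa}, which passes from the regular sequence $f_1^h,\ldots,f_m^h,X_0$ to $f_1^h,\ldots,f_m^h$. The steps are:
\begin{enumerate}
\item Since $X_0$ is a non-zero-divisor and $F_i^h\equiv F_i^{\mathrm{top}}\pmod{X_0}$, the sequence $X_0,F_1^h,\ldots,F_m^h$ is a homogeneous regular sequence. It may therefore be permuted to $F_1^h,\ldots,F_m^h,X_0$.
\item Localize at $X_0$. With $Y_j=X_j/X_0$, one has $\K[X_0,\ldots,X_n]_{X_0}=\K[Y_1,\ldots,Y_n][X_0^{\pm1}]$, which is faithfully flat over $\K[Y_1,\ldots,Y_n]$. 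In this ring $F_i^h$ is a unit times $F_i(Y_1,\ldots,Y_n)$, so the non-zero-divisor conditions descend to $F_1,\ldots,F_m$.
\item Properness also follows. A relation $1=\sum g_iF_i$ would homogenize to $X_0^N\in(F_1^h,\ldots,F_m^h)$. This is impossible, since $X_0$ is a non-zero-divisor modulo this proper homogeneous ideal.
\end{enumerate}
That version works verbatim over any field, and for leading components with respect to a weighted grading. Yours is more geometric and matches the style of Section 3, at the price of invoking the unmixedness theorem and a base change.
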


 We shall frequently use the following criterion to
prove that a variety is a complete intersection, which is a rephrase in geometric terms of
\cite[Theorem 18.15]{Eisenbud95}.

\begin{theorem}\label{theorem: eisenbud 18.15}
	Let $F_1,\ldots,F_m\in\mathbb{K}[X_1,\ldots,X_n]$ be polynomials
	which form a regular sequence and let
	$V:=V(F_1,\ldots,F_m)\subset\A^n$.  If the subvariety
	of $V$ defined by the set of common zeros of the maximal minors of
	$\mathrm{J}(F_1\klk F_m)$ has codimension at least one in $V$, then $F_1,\ldots,F_m$ define a radical ideal. In particular,
	$V$ is a complete intersection.
\end{theorem}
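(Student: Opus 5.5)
This statement is a geometric rephrasing of \cite[Theorem 18.15]{Eisenbud95}, so the plan is to translate our hypotheses into the algebraic ones of that theorem. Put $R:=\K[X_1,\ldots,X_n]$, $I:=(F_1,\ldots,F_m)$ and $A:=R/I$. Eisenbud's result says: if $I$ is generated by a regular sequence (so $A$ is Cohen--Macaulay of pure codimension $m$) and the ideal $J$ generated by the $m\times m$ minors of $\mathrm{J}(F_1,\ldots,F_m)$ satisfies $\operatorname{codim}_A (J+I)/I\ge 1$, then $A$ is reduced. Once this is in place, $I$ is radical, and the Nullstellensatz gives $I=I(V)$. Since $V$ has pure dimension $n-m$, $V$ is then a complete intersection in the ideal-theoretic sense of Section~\ref{subsec: complete intersections}.

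The steps, in order, are as follows.
\begin{enumerate}
\item[(1)] From regularity of $F_1,\ldots,F_m$, we get that $A$ is Cohen--Macaulay and every minimal prime of $I$ has height $m$. Equivalently, $V$ has pure dimension $n-m$, and $A$ has no embedded primes; this is unmixedness in a Cohen--Macaulay ring.
\item[(2)] We show that the geometric hypothesis, namely that $V(J)\cap V$ has codimension at least one in $V$, forces $J\not\subset \mathfrak p$ for each minimal prime $\mathfrak p$ of $I$. Indeed, if $J\subset\mathfrak p$, then the irreducible component $V(\mathfrak p)$, of dimension $n-m$, would lie inside $V(J)\cap V$, contradicting the codimension hypothesis. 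Over $\K=\fq$ we pass to $\cfq$: regularity, radicality and the minors are all preserved by the faithfully flat extension $\fq\to\cfq$, and components over $\fq$ are unions of Galois orbits of absolute components.
\item[(3)] For each minimal prime $\mathfrak p$, the Jacobian criterion applied to $R_{\mathfrak p}$ (which has the expected dimension) shows that $A_{\mathfrak p}$ is a regular local ring of dimension $0$, i.e.\ a field. Hence $A$ satisfies $R_0$, and since it is Cohen--Macaulay it also satisfies $S_1$. By Serre's criterion, $A$ is reduced.
\end{enumerate}

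The main obstacle is step (3). Because $\K$ may be the non-perfect-free base $\fq$, one must make sure the Jacobian criterion is applied in a form valid there. I would handle this by first base-changing to $\cfq$, where the criterion holds verbatim, and then noting that radicality of $I\cfq[X]$ implies radicality of $I$, since $R\to\cfq[X_1,\ldots,X_n]$ is faithfully flat and reducedness descends. Alternatively, one can observe that $\fq$ is perfect, so separability issues do not arise; in that case step (3) is just \cite[Theorem 18.15]{Eisenbud95} applied directly.
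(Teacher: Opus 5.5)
Your proposal is correct and follows essentially the paper's route: the paper gives no argument beyond citing \cite[Theorem 18.15]{Eisenbud95}, and your steps (1)--(3) simply unpack that theorem's hypotheses and proof (pure height $m$ and unmixedness from the regular sequence, the maximal minors avoiding every minimal prime of $I$, $R_0$ via the Jacobian criterion together with $S_1$ from Cohen--Macaulayness and Serre's criterion, then the Nullstellensatz for $I=I(V)$). The detour through $\cfq$ is unnecessary, since $\fq$ is itself perfect and Eisenbud's theorem applies directly for either choice of $\K$.
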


If $V\subset\Pp^n$ is a complete intersection defined over $\K$ of
dimension $n-m$, and $F_1 ,\ldots, F_m$ is a system of homogeneous
generators of $I(V)$,
of degrees $d_1,\ldots, d_m$ respectively. In this case, a stronger version of
\eqref{eq: Bezout} holds, called the {\em B\'ezout theorem} (see,
e.g., \cite[Section 2.5.6a]{Man2018})
\begin{equation*}\label{eq: Bezout theorem}
\deg V=d_1\cdots d_m.
\end{equation*}

A complete intersection $V$ is called {\em normal} if it is {\em
	regular in codimension 1}, that is, the singular locus
$\mathrm{Sing}(V)$ of $V$ has codimension at least $2$ in $V$,
namely $\dim V-\dim \mathrm{Sing}(V)\ge 2$ (actually, normality is a
general notion that agrees on complete intersections with the one we
define here). A fundamental result for projective complete
intersections is the Hartshorne connectedness theorem (see, e.g.,
\cite[Theorem VI.4.2]{Kunz85}): if $V\subset\Pp^n$ is a complete
intersection defined over $\K$ and $W\subset V$ is any
$\K$--subvariety of codimension at least 2, then $V\setminus W$ is
connected in the Zariski topology of $\Pp^n$ over $\K$. Applying the
Hartshorne connectedness theorem with $W:=\mathrm{Sing}(V)$, one
deduces the following result.

\begin{theorem}\label{theorem: normal complete int implies irred}
	If $V\subset\Pp^n$ is a normal complete intersection, then $V$ is
	absolutely irreducible.
\end{theorem}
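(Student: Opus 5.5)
The plan is to argue by contradiction, combining the Hartshorne connectedness theorem quoted just above with the elementary fact that two distinct irreducible components of a complete intersection meet in a set of codimension one. First, the reduction to a connectedness statement. Let $V\subset\Pp^n$ be a normal complete intersection of dimension $r=n-m$ defined over $\K$. Set $W:=\mathrm{Sing}(V)$; by normality, $\dim V-\dim W\ge 2$. Since $V$ is a complete intersection over $\K$, it is also one over $\cfq$: the generators of $I(V)$ remain generators after extending scalars, because $V$ is then defined by a radical ideal that stays radical over the perfect field $\fq$. The singular locus is likewise unchanged. We may therefore work over $\cfq$ and apply Hartshorne's connectedness theorem there. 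It gives that $V\setminus W$ is connected.

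Next, the contradiction. Suppose $V=\mathcal{C}_1\cup\cdots\cup\mathcal{C}_s$ with $s\ge 2$ absolutely irreducible components. Since $V$ is a complete intersection, it has pure dimension $r$; this follows from Krull's principal ideal theorem, or from Lemma \ref{lemma: reg seq and complete int} applied to the affine cone.

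The key point is that every point of $\mathcal{C}_i\cap\mathcal{C}_j$ with $i\ne j$ is singular. At such a point $\boldsymbol{x}$ the local ring of $V$ is not a domain, while a regular local ring is a domain. Equivalently, $\dim T_{\boldsymbol{x}}V>r$ whenever two components pass through $\boldsymbol{x}$, because a regular point lies on a unique component.

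Hence the open sets $U_i:=\mathcal{C}_i\setminus\big(W\cup\bigcup_{j\ne i}\mathcal{C}_j\big)=\mathcal{C}_i\setminus W$ are pairwise disjoint. Each $U_i$ is open in $V\setminus W$, since its complement there is the closed set $\bigcup_{j\ne i}\mathcal{C}_j\cap(V\setminus W)$. The sets $U_i$ cover $V\setminus W$. Moreover each $U_i$ is nonempty, because $\dim W\le r-2<r=\dim\mathcal{C}_i$. So $V\setminus W$ is a disjoint union of $s\ge2$ nonempty open subsets, contradicting connectedness. Therefore $s=1$, and $V$ is absolutely irreducible.

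The main obstacle is the step asserting that intersection points of distinct components are singular, in the tangent-space language of the paper. I would justify it via the standard fact that a regular point has a regular, hence integral, local ring, so that only one component passes through it. Alternatively, one can argue that $\dim T_{\boldsymbol{x}}V$ computed from the generators of $I(V)$ equals the embedding dimension of $\mathcal{O}_{V,\boldsymbol{x}}$, and that equality with $r$ forces $\mathcal{O}_{V,\boldsymbol{x}}$ to be regular. The base change to $\cfq$ is routine.
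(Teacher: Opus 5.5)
Your proposal is correct and takes the same route as the paper, which simply applies the Hartshorne connectedness theorem with $W:=\mathrm{Sing}(V)$ and leaves everything else implicit. You supply exactly the omitted details: passing to $\cfq$ (legitimate because $\fq$ is perfect, so $I(V)$ stays radical), pure dimensionality via Krull, and the fact that distinct components meet only at singular points, so that $V\setminus\mathrm{Sing}(V)$ would otherwise split into the disjoint nonempty open pieces $\mathcal{C}_i\setminus\mathrm{Sing}(V)$.
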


\subsection{Vandermonde determinants}

We shall use a result on generalized Vandermonde determinants due to S. De Marchi (see \cite[Theorem 3.2]{Marchi}).
Given the generalized Vandermonde-type determinant as follows:

\begin{equation} \label{eq:det5}
V_{m_{n-1},\ldots,m_1}(X) =
\begin{vmatrix}
1 & X_1^{m_1} & \cdots & X_1^{m_{n-1}} \\
1 & X_2^{m_1} & \cdots & X_2^{m_{n-1}} \\
\vdots & \vdots & & \vdots \\
1 & X_{n-1}^{m_1} & \cdots & X_{n-1}^{m_{n-1}} \\
1 & X^{m_1} & \cdots & X^{m_{n-1}}
\end{vmatrix},
\end{equation}
where $m_1<m_2<\cdots<m_{n-1}$ are positive integers. The following theorem provides an explicit formula for its determinant.
\begin{theorem}[{\cite[Theorem 3.2]{Marchi}}]\label{thm:main}
Let $\alpha = (m_{n-1}, m_{n-2}, \dots, m_1, 0)$ and
$\delta = (n-1, n-2, \dots, 1, 0)$ be the partitions of length $n$ associated with the determinant \eqref{eq:det5}. Let $\lambda = \alpha - \delta$ and let $\lambda'$ be its conjugate partition. Hence,
\begin{equation} \label{eq:det6}
V_{m_{n-1},\ldots,m_1}(X) = \operatorname{VDM}(X_1,\ldots,X_{n-1})
\left( \prod_{i=1}^{n-1}(X - X_i) \right) P_M(X),
\end{equation}
where $\operatorname{VDM}(X_1\klk X_{n-1})$ is the classical Vandermonde determinant, $M = m_{n-1} - n + 1$, and
\begin{equation} \label{eq:det7}
P_M(X) =
\begin{vmatrix}
(-1)^{\lambda'_1 - 1} e_{\lambda'_1 - 1}^{(n-1)} & (-1)^{\lambda'_1} e_{\lambda'_1}^{(n-1)} & \cdots & (-1)^{\lambda'_1 + M - 1} e_{\lambda'_1 + M - 1}^{(n-1)} \\
\vdots & \vdots & & \vdots \\
(-1)^{\lambda'_M - M} e_{\lambda'_M - M}^{(n-1)} & (-1)^{\lambda'_M - M + 1} e_{\lambda'_M - M + 1}^{(n-1)} & \cdots & (-1)^{\lambda'_M} e_{\lambda'_M}^{(n-1)} \\
1 & X & \cdots & X^M
\end{vmatrix}.
\end{equation}
\end{theorem}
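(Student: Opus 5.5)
We prove the factorization \eqref{eq:det6} by viewing $V_{m_{n-1},\ldots,m_1}(X)$ as a polynomial in the single variable $X$ over $\K[X_1,\ldots,X_{n-1}]$. The plan is to peel off the factors $\mathrm{VDM}(X_1,\ldots,X_{n-1})$ and $\prod_{i}(X-X_i)$, and then to identify the remaining cofactor with the determinant $P_M(X)$ of \eqref{eq:det7}.

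\textbf{Step 1: the linear factors.} Expanding along the last row shows that $V_{m_{n-1},\ldots,m_1}(X)$ is a linear combination of the monomials $1,X^{m_1},\ldots,X^{m_{n-1}}$. It vanishes at $X=X_i$ for each $i\le n-1$, since then two rows coincide. It is also alternating in $X_1,\ldots,X_{n-1}$. Hence it is divisible by $\prod_{i=1}^{n-1}(X-X_i)$ and by $\mathrm{VDM}(X_1,\ldots,X_{n-1})$. These two factors are coprime, so their product divides the determinant.

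\textbf{Step 2: the cofactor via Schur functions.} Set $x_n:=X$. The bialternant formula gives
\[
V_{m_{n-1},\ldots,m_1}=\pm\, a_{\alpha}(X_1,\ldots,X_{n-1},X)=\pm\, \mathrm{VDM}(X_1,\ldots,X_{n-1},X)\, s_\lambda(X_1,\ldots,X_{n-1},X),
\]
where $\lambda=\alpha-\delta$. Since $\mathrm{VDM}(X_1,\ldots,X_{n-1},X)$ equals $\pm\mathrm{VDM}(X_1,\ldots,X_{n-1})\prod_i(X-X_i)$, it remains to show that $s_\lambda(X_1,\ldots,X_{n-1},X)=P_M(X)$ up to the sign convention.

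For this we use the dual Jacobi--Trudi identity $s_\lambda=\det\big(e_{\lambda'_i-i+j}\big)$ in $n$ variables. The number of rows of this matrix is the length of $\lambda'$, namely $\lambda_1=m_{n-1}-n+1=M$. We then substitute
\[
e_k^{(n)}=e_k^{(n-1)}+X\,e_{k-1}^{(n-1)}.
\]
Equivalently, write $s_\lambda$ through the branching rule, $s_\lambda(X_1,\ldots,X_{n-1},X)=\sum_{\mu}s_\mu(X_1,\ldots,X_{n-1})X^{|\lambda|-|\mu|}$, summed over horizontal strips $\lambda/\mu$. The cleanest route is to recognize the $(M+1)\times(M+1)$ determinant $P_M(X)$ directly. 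Expand it along its last row $(1,X,\ldots,X^M)$. The coefficient of $X^j$ is then an $M\times M$ minor of the signed matrix $\big((-1)^{\lambda'_i-i+k}e^{(n-1)}_{\lambda'_i-i+k}\big)$. After factoring out the signs, each such minor is a Jacobi--Trudi determinant for a partition obtained from $\lambda'$ by removing a vertical strip, that is, from $\lambda$ by removing a horizontal strip. This reproduces the branching expansion, which yields the identity.

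The main obstacle is the bookkeeping in Step 2: matching the indices $\lambda'_i-i+k$ in the minors with horizontal strips, and tracking the signs $(-1)^{\cdot}$ together with the row reordering between \eqref{eq:det5} and the bialternant. We would first settle the sign globally by comparing leading monomials in $X$. The coefficient of $X^{M}$ in $P_M$ is $\det\big((-1)^{\lambda'_i-i+k}e_{\lambda'_i-i+k}\big)_{i,k\le M}$. Up to sign this is $s_{\bar\lambda}(X_1,\ldots,X_{n-1})$, where $\bar\lambda$ is $\lambda$ with its first part removed, and this matches the leading coefficient of $s_\lambda$ in $X$. With the global sign fixed this way, only the minor-to-strip correspondence needs to be checked.
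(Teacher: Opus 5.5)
The paper gives no proof of this statement (it is quoted from De Marchi), so your argument has to stand on its own. Your reduction is sound. With the increasing column order of \eqref{eq:det5}, the bialternant formula gives $V_{m_{n-1},\ldots,m_1}(X)=\mathrm{VDM}(X_1,\ldots,X_{n-1},X)\,s_\lambda(X_1,\ldots,X_{n-1},X)$ exactly. Since $\mathrm{VDM}(X_1,\ldots,X_{n-1},X)=\mathrm{VDM}(X_1,\ldots,X_{n-1})\prod_i(X-X_i)$, Step~1 is superfluous.

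\textbf{The gap is in the identification of $P_M$.} The cofactor of $X^j$ in \eqref{eq:det7} is not a straight Jacobi--Trudi determinant. Delete the column $k=j$ from $\big((-1)^{\lambda'_i-i+k}e^{(n-1)}_{\lambda'_i-i+k}\big)_{1\le i\le M,\,0\le k\le M}$ and renumber the remaining columns $c=1,\ldots,M$. After factoring row and column signs, the coefficient of $X^j$ becomes $(-1)^{M+|\lambda|}\det\big(e^{(n-1)}_{\lambda'_i-\mu'_c-i+c}\big)_{1\le i,c\le M}$ with $\mu'=(1^j)$. This is $(-1)^{M+|\lambda|}$ times the \emph{skew} Schur polynomial $s_{\lambda/(j)}(X_1,\ldots,X_{n-1})$. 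By the Pieri rule, $s_{\lambda/(j)}$ is the sum of $s_\mu$ over \emph{all} $\mu$ with $\lambda/\mu$ a horizontal strip of size $j$.

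For example, take $n=3$ and $(m_1,m_2)=(2,4)$, so $\lambda=(2,1,0)$ and $M=2$. Writing $e_k=e^{(2)}_k$, one computes $P_2=-\big(e_1e_2+e_1^2X+e_1X^2\big)$. The single coefficient $-e_1^2=-(s_{(2)}+s_{(1,1)})$ accounts for two strips. So the minor-to-strip correspondence you intend to check does not exist. What does hold is $s_\lambda(X_1,\ldots,X_{n-1},X)=\sum_{j=0}^{M}s_{\lambda/(j)}(X_1,\ldots,X_{n-1})\,X^j$, which finishes the coefficientwise route.

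A much shorter repair uses the identity $e^{(n)}_k=e^{(n-1)}_k+Xe^{(n-1)}_{k-1}$, which you wrote down but did not use. Apply the column operations $C_c\leftarrow C_c-XC_{c-1}$ for $c=M+1,M,\ldots,2$. They turn each entry $(-1)^te^{(n-1)}_t$ of columns $2,\ldots,M+1$ into $(-1)^te^{(n)}_t$, and they kill the last row except its leading $1$. Hence $P_M=(-1)^M\det\big((-1)^{\lambda'_i-i+k}e^{(n)}_{\lambda'_i-i+k}\big)_{1\le i,k\le M}=(-1)^{M+|\lambda|}s_\lambda(X_1,\ldots,X_{n-1},X)$ by dual Jacobi--Trudi.

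\textbf{The sign.} The same computation shows that the sign cannot be settled in favour of \eqref{eq:det6} as printed. The identity holds only up to $(-1)^{M+|\lambda|}$, where $|\lambda|=\lambda_1+\cdots+\lambda_n$. For $n=3$, the case $(m_1,m_2)=(1,2)$ has $M=0$ and $P_0=1$, and it forces $\mathrm{VDM}(X_1,X_2)=X_2-X_1$. With this convention, in the example above at $(X_1,X_2,X)=(0,1,2)$, the left side of \eqref{eq:det6} equals $12$ and the right side equals $-12$.

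Your leading-coefficient comparison will produce exactly this factor. Note that the coefficient of $X^M$ comes from the columns $k=0,\ldots,M-1$. The columns $1,\ldots,M$ you wrote give the constant term $\pm s_\lambda(X_1,\ldots,X_{n-1})$, not $s_{\bar\lambda}$. So the correct conclusion is \eqref{eq:det6} up to the sign $(-1)^{M+|\lambda|}$. This is harmless for the paper, which uses the factorization only to pass from the vanishing of the determinant to the vanishing of $P_G$.
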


\section{Geometric properties of the power sum  varieties}
Let $n,m$ and $d_1, d_2 \klk d_m$  positive integers with $2 \le m \le n$ and
$d_1 < d_2 < \cdots < d_m$.

For technical reasons in the following we consider polynomial systems defined by polynomials of the form
\begin{equation}\label{eq:intro-system}
f_j := a_1 X_1^{d_j} + \cdots +a_n X_n^{d_j} - b_j,
\qquad 1 \le j \le m,
\end{equation}
with coefficients $a_i \in \fq \setminus\{0\}$ for $1 \leq i \leq n$ and $b_j \in \mathbb{F}_q$. We denote by $\mathcal{V}$ the corresponding affine variety  $\mathcal{V}:=V(f_1\klk f_m)\subset \A^n$.
 Observe that in the case $b_j=0$ for $1\leq j \leq m $ and $a_i=1$  for  $1\leq i \leq n$, we have $f_j=p_{d_j}^{(n)}$.

 Denote by $g_j$ the $j$-th difference $d_j - d_{j-1} - 1$, for $2 \leq j \leq m$. Note that  $g_j=0$ if and only if the exponents $d_j$ and $d_{j-1}$ are consecutive. We set $G = g_2 + \cdots + g_m=d_m-d_1-m+1$.

One of the purposes of this section is to obtain an upper bound for the dimension of $\mathrm{Sing}(\mathcal{V})$, the singular locus of $\mathcal{V}$. To do this, we analyze the following set
\begin{equation}\label{Conjunto C}
C=\left\{\boldsymbol{x}\in \mathcal{V}:\mathrm{rank}\, \mathrm{J}(f_1\klk f_m)(\boldsymbol{x}) <m\right\}.
\end{equation}

Since $\mathrm{Sing}(\mathcal{V})\subset C$, we begin by analyzing a key characterization of the points in $C$.

We shall use the following two technical lemmas.

Denote by $e^{(n)}_d$ $d$-th elementary symmetric polynomial of degree $d$ in $n$ variables $X_1, \ldots,X_n $ namely \[
    e^{(n)}_d=e_d(X_1, \dots, X_n) \;=\; \sum_{1 \le i_1 < \cdots < i_d \le n}
    X_{i_1} \cdots X_{i_d},\,\,\text{if } d\geq 1,
    \]
    $e_0^{(n)}=1$ and  $e_d^{(n)}=0$ if $d<0$ or $d> n$.
\begin{lemma}
\label{lowering the degree}
Let $n,m$ be positive integers and let
$y_{1},\ldots,y_{{n+m}}$ in any field $K$,
such that, $y_n\klk y_{n+m}$ are pairwise distinct. If
\[
\begin{vmatrix}
            e^{(n)}_{a_{11}} & \cdots & e^{(n)}_{a_{1m}}\\
            \vdots & & \vdots \\
            e^{(n)}_{a_{m1}} & \cdots & e^{(n)}_{a_{mm}}\\
        \end{vmatrix}(y_1\klk y_{n-1},y_l)=0
\quad
\text{for all } \ell=n,\ldots,n+m.\]
Then,
$$
\begin{vmatrix}
            e^{(n-1)}_{a_{11}-1} & \cdots & e^{(n-1)}_{a_{1m}-1}\\
            \vdots & & \vdots \\
            e^{(n-1)}_{a_{m1}-1} & \cdots & e^{(n-1)}_{a_{mm}-1}\\
        \end{vmatrix}(y_{1},\ldots,y_{n-1})=0.
$$
\end{lemma}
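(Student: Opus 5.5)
We use the standard recursion for elementary symmetric polynomials, isolating the last variable:
\[
e^{(n)}_d(y_1,\ldots,y_{n-1},t)=e^{(n-1)}_d(y_1,\ldots,y_{n-1})+t\,e^{(n-1)}_{d-1}(y_1,\ldots,y_{n-1}).
\]
Fix $y_1,\ldots,y_{n-1}$ and regard the last argument as an indeterminate $t$. Put $A:=\bigl(e^{(n-1)}_{a_{ij}}(y_1,\ldots,y_{n-1})\bigr)_{i,j}$ and $B:=\bigl(e^{(n-1)}_{a_{ij}-1}(y_1,\ldots,y_{n-1})\bigr)_{i,j}$, both $m\times m$ matrices with entries in $K$. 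The first determinant of the statement, evaluated at $(y_1,\ldots,y_{n-1},t)$, is then
\[
D(t):=\det(A+tB)\in K[t].
\]

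By multilinearity of the determinant in the columns, $D(t)$ is a polynomial in $t$ of degree at most $m$. Its coefficient of $t^m$ is exactly $\det B$, which is the second determinant of the statement.

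The hypothesis says that $D(y_\ell)=0$ for $\ell=n,\ldots,n+m$. These are $m+1$ pairwise distinct elements of $K$. A polynomial of degree at most $m$ over a field with $m+1$ distinct roots is identically zero, so every coefficient of $D$ vanishes. In particular the leading coefficient $\det B$ is zero, which is the claim.

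The only point to check carefully is the convention at the boundary cases. The recursion above must hold also when $d\le 0$ or $d>n$, with $e_0=1$ and $e_d=0$ for $d<0$ or $d$ beyond the number of variables. One verifies it directly: for $d=0$ it reads $1=1+t\cdot 0$; for $d<0$ both sides vanish; and for $d=n$ it reads $e^{(n)}_n=0+t\,e^{(n-1)}_{n-1}$, which is correct. With these checks there is no real obstacle; the argument is simply interpolation in the last variable.
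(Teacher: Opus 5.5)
Your proof is correct and follows the paper's argument: you expand the determinant using $e^{(n)}_d=e^{(n-1)}_d+X_n\,e^{(n-1)}_{d-1}$ into a polynomial of degree at most $m$ in the last variable whose top coefficient is the lowered determinant, and the $m+1$ distinct roots $y_n,\ldots,y_{n+m}$ then force that coefficient to vanish. Your explicit check of the recursion at the boundary indices is a small extra that the paper leaves implicit.
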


\begin{proof}
We will use the identity
\[
e^{(n)}_i = e^{(n-1)}_i + X_{n}\, e^{(n-1)}_{i-1}.
\]

Note that,

\begin{align*}
        \begin{vmatrix}
            e^{(n)}_{a_{11}} & \cdots & e^{(n)}_{a_{1m}}\\
            \vdots & & \vdots \\
            e^{(n)}_{a_{m1}} & \cdots & e^{(n)}_{a_{mm}}\\
        \end{vmatrix}
        &=
        \begin{vmatrix}
            e^{(n-1)}_{a_{11}} + X_{n} e^{(n-1)}_{a_{11}-1}
            & \cdots &
            e^{(n-1)}_{a_{1m}} + X_{n} e^{(n-1)}_{a_{1m}-1}\\
            \vdots & & \vdots \\
            e^{(n-1)}_{a_{m1}} + X_{n} e^{(n-1)}_{a_{m1}-1}
            & \cdots &
            e^{(n-1)}_{a_{mm}} + {X_n} e^{(n-1)}_{a_{mm}-1}\\
        \end{vmatrix} \\
        &=
        \underbrace{
        \begin{vmatrix}
            e^{(n-1)}_{a_{11}}& \cdots & e^{(n-1)}_{a_{1m}}\\
            \vdots & & \vdots \\
            e^{(n-1)}_{a_{m1}} & \cdots & e^{(n-1)}_{a_{mm}}\\
        \end{vmatrix}
        }_{B_0} + B_1 X_{n} + \cdots +
        \underbrace{
        \begin{vmatrix}
            e^{(n-1)}_{a_{11}-1} & \cdots & e^{(n-1)}_{a_{1m}-1}\\
            \vdots & & \vdots \\
            e^{(n-1)}_{a_{m1}-1} & \cdots & e^{(n-1)}_{a_{mm}-1}\\
        \end{vmatrix}
        }_{B_m}
        X_{n}^m \\
        &=
        B_0 + B_1 X_{_n} + \cdots + B_{m-1} X_{_n}^{m-1} + B_m X_{n}^m\\
        &=:D\in K[X_1\klk X_{n-1}][X_n]
\end{align*}

Let $\bfs{y}_{n-1}:=(y_1,\klk, y_{n-1})$.
Observe that $D(\bfs{y}_{n-1},X_n)\in K[X_n]$, a polynomial of degree at most $m$, has at least $m+1$ distinct roots, $y_n\klk y_{n+m}$, by hypothesis. Then $B_m(\bfs{y}_{n-1})=0$ follows.
\end{proof}

\begin{lemma}\label{cada m+M} Assume that the characteristic $p$ of $\fq$ does not divide any $d_j$ for $1\leq j \leq m$.
Then every subsequence of \( \boldsymbol{x} \in C\)   with \( m+G \) nonzero entries must contain at least two equal entries.

\end{lemma}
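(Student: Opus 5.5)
We argue by contradiction. Let $\boldsymbol{x}\in C$ and suppose there are indices $i_1<\cdots<i_{m+G}$ with $x_{i_1},\ldots,x_{i_{m+G}}$ nonzero and pairwise distinct. Set $N:=m+G$ and $y_k:=x_{i_k}$. The Jacobian of $f_1\klk f_m$ has entries $\partial f_j/\partial X_i=d_ja_iX_i^{d_j-1}$. Since $p\nmid d_j$ and $a_i\neq0$, we may factor $d_j$ out of row $j$ and $a_i$ out of column $i$ of any $m\times m$ minor. Hence $\mathrm{rank}\,\mathrm{J}<m$ implies that every $m\times m$ minor of the matrix $(y_k^{d_j-1})_{j\le m,\,k\le N}$ vanishes. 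The $y_k$ are nonzero, so we may also divide column $k$ by $y_k^{d_1-1}$. The conclusion is that every $m\times m$ minor of $(y_k^{e_j})$ vanishes, where
\[
e_j:=d_j-d_1,\qquad 0=e_1<e_2<\cdots<e_m=G+m-1 .
\]

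Next we express these minors with Theorem \ref{thm:main}. Choose any $m$ of the $y$'s, say $z_1,\ldots,z_m$. The minor is the generalized Vandermonde determinant with exponents $0<e_2<\cdots<e_m$. By \eqref{eq:det6} it equals $\operatorname{VDM}(z_1,\ldots,z_m)$ times a Schur-type polynomial $S_\lambda(z_1,\ldots,z_m)$, where $\lambda=(e_m-m+1,\ldots,e_2-1,0)$. The Vandermonde factor is nonzero because the $z_k$ are distinct, so $S_\lambda(z_1,\ldots,z_m)=0$ for every $m$-subset of $\{y_1,\ldots,y_N\}$.

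By the dual Jacobi--Trudi form used in \eqref{eq:det7}, $S_\lambda$ is a determinant in elementary symmetric polynomials $e^{(m)}_{\lambda'_r-r+s}$ of size $\lambda_1=e_m-m+1=G$. Its conjugate partition has at most $m-1$ nonzero parts. We now apply Lemma \ref{lowering the degree} repeatedly. Fix $z_1,\ldots,z_{m-1}$ and let the last variable range over $G+1$ further distinct values among the $y$'s. This needs $m-1+G+1=N$ values in total, which is exactly the supply. The lemma then gives the vanishing of the determinant with all indices lowered by one, in $m-1$ variables, and this lowered determinant is the Schur polynomial of the partition with its first column removed.

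The main obstacle is the bookkeeping of this induction. Each application uses up one variable, and we must check that enough distinct values remain at every stage: at stage $t$ we need $(m-t)+G'$ values, where $G'$ is the current size of the determinant. We also need to see that after about $m-1$ steps the determinant reduces to a single entry $e^{(\cdot)}_0=1$, or to a triangular determinant with unit diagonal, which is a nonzero constant. That gives the contradiction. If the counting turns out to be too tight, the fallback is a direct argument for the polynomial $D(z_1,\ldots,z_{m-1},X)$. In the notation of \eqref{eq:det6}, it is $\prod_{k<m}(X-z_k)P_G(X)$. By \eqref{eq:det7}, $P_G$ has degree $G$ with leading coefficient a minor in the $e^{(m-1)}$'s. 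Vanishing at $G+1$ further distinct values forces $P_G\equiv0$, and induction on $m$ then reduces to the case $m=1$, where the determinant is the constant $1$.
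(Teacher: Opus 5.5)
Your reduction is the paper's. You cancel $d_j$, $a_i$, $y_k^{d_1-1}$ and the Vandermonde factor, so the Schur factor vanishes on every $m$-subset of the $m+G$ values. One application of Lemma~\ref{lowering the degree} to its dual Jacobi--Trudi form then shows that $\det\bigl(e^{(m-1)}_{\lambda'_r-r+s-1}\bigr)_{1\le r,s\le G}$ vanishes at every $(m-1)$-subset. The paper does the same step by noting that $P_G(\boldsymbol{z},X)$ has degree $\le G$ and $G+1$ roots.

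What remains is the actual content of the lemma, and you only announce it as ``bookkeeping''. The paper proves it as a separate Claim. If $n\ge t_1\ge\cdots\ge t_N\ge 0$, then $\det\bigl(e^{(n)}_{t_i-i+j}\bigr)_{1\le i,j\le N}$ cannot vanish at all $n$-subsets of $n+N+1$ distinct nonzero elements. The proof is by induction on $N$:
if $t_N=0$ the last row is $(0,\ldots,0,1)$ and one passes to size $N-1$;
if $t_N=n$ the determinant is $(e^{(n)}_n)^N\neq 0$;
otherwise one lowers exactly $t_N$ times with Lemma~\ref{lowering the degree} and then passes to size $N-1$.

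You allude to a varying size $G'$, but never say when or why the size may drop, and the drop is not optional. If you keep lowering the same $G\times G$ determinant, after $\lambda'_G$ steps its last row is $(0,\ldots,0,1)$ and its degree in the last variable falls below $G$. The next application of Lemma~\ref{lowering the degree} then only tells you that a determinant with a zero row vanishes. So you need exactly such an induction, with a hypothesis on the number of available values that survives both lowering and truncation. Without it the proof stops where the work starts.

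The partial description you give also misdirects the bookkeeping.
\begin{itemize}
\item Lowering every index by one replaces $\lambda'$ by $\lambda'-(1,\ldots,1)$. This deletes the first \emph{row} of $\lambda$, not its first column: the coefficient of $X^{\lambda_1}$ in $s_\lambda(z_1,\ldots,z_{m-1},X)$ is $s_{(\lambda_2,\lambda_3,\ldots)}(z_1,\ldots,z_{m-1})$. For exponents $0,2,3$ one gets $e^{(3)}_2\mapsto e^{(2)}_1$, not $1$.
\item With your identification the chain would have length $\lambda_1=G$, which can exceed the number $m$ of variables. The correct length is $\ell(\lambda)=\lambda'_1\le m-1$. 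This bound, which comes from $e_1=0$, i.e.\ $\lambda_m=0$, is what guarantees you reach the constant $1$ with at least one variable left.
\item It is not true that $\lambda'$ has at most $m-1$ nonzero parts; it has exactly $G$.
\item Your fallback ``induction on $m$'' has the same hole. To recurse you must show that the leading coefficient $A_G$ is again a generalized Vandermonde quotient for $m-1$ points with gap parameter at most $G$. It is, being $\pm s_{(\lambda_2,\ldots,\lambda_m)}$, but you do not prove it. The paper avoids this by inducting on the determinants $\det(e^{(n)}_{t_i-i+j})$ themselves, a family closed under both lowering and truncation.
\end{itemize}
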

\begin{proof}
Let $\boldsymbol{x}=(x_1, \ldots, x_n)$.
Toward a contradiction, suppose that there is a subsequence $(x_{\alpha_1}, \ldots, x_{\alpha_{m+G}})$ of $\boldsymbol{x}$ with $m+G$ pairwise distinct nonzero entries.
Let $\boldsymbol{y}=(x_{\alpha_{k_1}}, \ldots, x_{\alpha_{k_m}})$ be any permutation of a subsequence of length $m$ of $(x_{\alpha_1}, \ldots, x_{\alpha_{m+G}})$.
We consider the  $(m\times m)-$minor of $\mathrm{J}(f_1\klk f_m)(\boldsymbol{x})$ involving the variables $X_{\alpha_{k_1}}, \ldots, X_{\alpha_{k_m}}$, which up to a sign is equal to the determinant

\begin{align*}
  \begin{vmatrix}
       a_{\alpha_{k_1}} d_1 x_{\alpha_{k_1}}^{d_1 - 1} & \cdots & a_{\alpha_{k_m}} d_1 x_{\alpha_{k_m}}^{d_1 - 1} \\
        \vdots & \ddots & \vdots \\
       a_{\alpha_{k_1}} d_ m  x_{\alpha_{k_1}}^{d_ m -1 } & \cdots & a_{\alpha_{k_m}} d_ m  x_{\alpha_{k_m}}^{d_ m -1 }
    \end{vmatrix}
    = \bigg(\prod_{i=1}^{m} a_{\alpha_{k_i}}d_ix_{\alpha_{k_i}}^{d_1 - 1}  \bigg)
    \begin{vmatrix}
        1 & \cdots & 1 \\
        x_{\alpha_{k_1}}^{d_2 - d_1} & \cdots & x_{\alpha_{k_m}}^{d_2 - d_1} \\
        \vdots & & \vdots \\
        x_{\alpha_{k_1}}^{d_m - d_1} & \cdots & x_{\alpha_{k_m}}^{d_m - d_1}
    \end{vmatrix}.
\end{align*}
As $\bfs{x}\in C$, this determinant vanishes. Since, by the hypothesis, $\prod_{j=1}^{m} a_{\alpha_{k_j}} d_jx_{\alpha_{k_j}}^{d_1-1}$
is a nonzero element of $\cfq$, it follows that
\begin{align*}
    \begin{vmatrix}
        1 & \cdots & 1 \\
        x_{\alpha_{k_1}}^{d_2 - d_1} & \cdots & x_{\alpha_{k_m}}^{d_2 - d_1} \\
        \vdots & & \vdots \\
        x_{\alpha_{k_1}}^{d_m-d_1} & \cdots & x_{\alpha_{k_m}}^{d_m-d_1}
    \end{vmatrix}
    = 0.
\end{align*}

By Theorem \ref{thm:main}, we have

\begin{align*}
    \begin{vmatrix}
        1 & \cdots & 1 \\
        x_{\alpha_{k_1}}^{d_2 - d_1} & \cdots & x_{\alpha_{k_m}}^{d_2 - d_1} \\
        \vdots & & \vdots \\
        x_{\alpha_{k_1}}^{d_m - d_1} & \cdots & x_{\alpha_{k_m}}^{d_m - d_1}
    \end{vmatrix}
    = \mathrm{VDM}(x_{\alpha_{k_1}}, \ldots, x_{\alpha_{k_m}})  P_G(x_{\alpha_{k_1}}, \ldots, x_{\alpha_{k_{m-1}}}, x_{\alpha_{k_m}}),
\end{align*}

where $P_G$ denotes the polynomial in $\fq[X_1, \ldots, X_{m-1}, X]$ defined in $\eqref{eq:det7}$ and $G=d_m-d_1-m +1$. Since $x_{\alpha_{k_1}}, \ldots, x_{\alpha_{k_m}}$ are pairwise distinct, $\mathrm{VDM}(x_{\alpha_{k_1}}, \ldots, x_{\alpha_{k_m}})\neq 0$ and
we conclude that $P_G(\boldsymbol{y}) = 0$ for any permutation $\boldsymbol{y}=(x_{\alpha_{k_1}}, \ldots, x_{\alpha_{k_m}})$ of a subsequence of length $m$ of $(x_{\alpha_1}, \ldots, x_{\alpha_{m+G}})$.

According to Theorem \ref{thm:main} we define the  vectors:
\begin{align*}
    \alpha&:=(m-1+G, d_{m-1}-d_1\klk d_2-d_1,0),\\
    \delta&:=(m-1,m-2\klk2,1,0),\\
    \lambda&=\alpha-\delta=(\underbrace{G\klk G}_{r_1},\underbrace{G-1\klk G-1}_{r_2}\klk\underbrace{1\klk 1}_{r_{G}}\underbrace{0\klk 0}_{r_{G+1}}),\\
    \lambda'&=\left(\sum_{j=1}^{G}r_j,\sum_{j=1}^{G-1}r_j\klk r_1+r_2,r_1\right):=(\lambda_1'\klk\lambda'_G)\end{align*}
where $\lambda'$ is the conjugate partition of $\lambda$, $\sum_{j=1}^{G+1}r_j=m$ and $r_1, r_{G+1}\neq 0$. Observe that $m\geq \lambda'_1 \geq \lambda'_2\geq \cdots \geq \lambda'_G \geq 1$.

We have
\begin{align}\label{def:PM}
P_G =&\begin{vmatrix}
    (-1)^{\lambda'_1 - 1}e_{\lambda'_1 - 1}^{(m-1)} & (-1)^{\lambda'_1}e_{\lambda'_1}^{(m-1)}  & \cdots & (-1)^{\lambda'_1 + G - 1}e_{\lambda'_1 + G - 1}^{(m-1)}  \\
    \vdots & \ddots & & \vdots \\
    (-1)^{\lambda'_G - G}e_{\lambda'_G - G}^{(m-1)}  & \cdots & & (-1)^{\lambda'_G}e_{\lambda'_G}^{(m-1)} \\
    1 & X & \cdots & X^G
\end{vmatrix}\\
\nonumber =& A_G X^G + A_{G-1} X^{G-1} + \cdots + A_0,
\end{align}
where
\begin{align}\label{def:AM}
A_G =&\begin{vmatrix}
    (-1)^{\lambda'_1 - 1}e_{\lambda'_1 - 1}^{(m-1)} & (-1)^{\lambda'_1}e_{\lambda'_1}^{(m-1)} & \cdots & (-1)^{\lambda'_1 + G - 2}e_{\lambda'_1 + G - 2}^{(m-1)} \\
    \vdots & \ddots & & \vdots \\
    (-1)^{\lambda'_G - G}e_{\lambda'_G - G}^{(m-1)} & \cdots & & (-1)^{\lambda'_G - 1}e_{\lambda'_G - 1}^{(m-1)}
\end{vmatrix}\\
\nonumber =&(-1)^{\lambda'_1 - 1} \cdots (-1)^{\lambda'_G - G} \begin{vmatrix}
    e_{\lambda'_1 - 1}^{(m-1)} & -e_{\lambda'_1}^{(m-1)}& \cdots & (-1)^{ G - 1}e_{\lambda'_1 + G - 2}^{(m-1)} \\
    \vdots & \ddots & & \vdots \\
    e_{\lambda'_G - G}^{(m-1)} & \cdots & & (-1)^{G-1}e_{\lambda'_G - 1}^{(m-1)}
\end{vmatrix}.
\end{align}

 Note that if $G=0$, then $P_G=1$ and we have a contradiction. Then, we shall assume that $G>0$.  Let $\boldsymbol{z}=(z_1, \ldots, z_{m-1})$ be any permutation of a subsequence of length $m-1$ of $(x_{\alpha_{1}}, \ldots, x_{\alpha_{m+G}})$. Let $\{t_1, \ldots t_{G+1}\}:=\{x_{\alpha_{1}}, \ldots, x_{\alpha_{m+G}}\}\setminus \{z_1, \ldots, z_{m-1}\}$ and set $\boldsymbol{y}_s:=(\boldsymbol{z}, t_{s})$ for $1\le s\le G+1$. Then $\boldsymbol{y}_1, \ldots, \boldsymbol{y}_{G+1}$ are permutations of subsequences of length $m$ of $(x_{\alpha_{1}}, \ldots, x_{\alpha_{m+G}})$ and by the conclusion above we have
\begin{equation}\label{PM1}
P_G(\boldsymbol{y}_{s}) = A_G(\boldsymbol{z}) {t_{s}}^G + A_{G-1}(\boldsymbol{z}) {t_{s}}^{G-1} + \cdots + A_0(\boldsymbol{z}) = 0
\end{equation}
for every $1\le s \le G+1$.

 Thus the polynomial $P_G(\boldsymbol{z}, X)$ of $\cfq[X]$, of degree at most $G$, has the $G+1$ distinct roots $t_1, \ldots, t_{G+1}$. It follows that $P_G(\boldsymbol{z}, X)$ is the zero polynomial. In particular $A_G(\boldsymbol{z})=0$.
By \eqref{def:AM}  we then have that
\begin{equation}\label{eq:det_1}
\det \big( e_{\lambda'_i-i+j-1}^{(m-1)}(\boldsymbol{z})\big)_{1\le i, j\le G}=\begin{vmatrix}
    e_{\lambda'_1 - 1}^{(m-1)} & e_{\lambda'_1}^{(m-1)} & \cdots & e_{\lambda'_1 + G - 2}^{(m-1)} \\
    \vdots & \ddots & & \vdots \\
    e_{\lambda'_G - G}^{(m-1)} & \cdots & & e_{\lambda'_G - 1}^{(m-1)}
\end{vmatrix}(\boldsymbol{z})=\pm A_G(\boldsymbol{z})=0
\end{equation}
for any permutation $\boldsymbol{z}$ of a subsequence of length $m-1$ of $(x_{\alpha_{1}}, \ldots, x_{\alpha_{m+G}})$. We assert that this is not possible.
Indeed, note that the determinant in \eqref{eq:det_1} is of the form
\begin{equation}\label{eq:det_claim}
\det \big( e_{t_i-i+j}^{(n)}\big)_{1\le i, j\le N}=\begin{vmatrix}
    e_{t_1}^{(n)} & e_{t_1+1}^{(n)} & \cdots & e_{t_1 + N - 1}^{(n)} \\
    \vdots & \ddots & & \vdots \\
    e_{t_N - (N-1)}^{(n)} & \cdots & & e_{t_N }^{(n)}
\end{vmatrix},
\end{equation}
where $n, N$ are positive integers and $n\geq t_1\geq \cdots t_N\geq 0$. Then our assertion and the lemma follows by the following claim.

\begin{claim}
Let $(y_1,\ldots,y_{n+N+1})$ be a sequence of $n+N+1$ pairwise
distinct nonzero elements in a field $K$. Then there exists an
$n$-subsequence $z=(z_1,\ldots,z_n)$ of
$(y_1,\ldots,y_{n+N+1})$ such that the determinant in (3.7)
does not vanish at $z$.
\end{claim}

\emph{Proof of Claim.} By induction on $N$.

Let $N=1$. Then the determinant in \eqref{eq:det_claim} is simply $e_{t_1}^{(n)}$. Let $\boldsymbol{z}=(z_1 \klk z_{n})$ be an $n$--subsequence of $(y_1 \klk y_{n+2})$. For $t_1=0$ we have that $e_{0}^{(n)}(\boldsymbol{z})=1\neq 0$. Let $t_1\geq 1$. If $n=t_1$, we have that $e_{n}^{(n)}(\boldsymbol{z})=z_1\cdot \ldots \cdot z_n\neq 0$. Now, let $n > t_1$ and toward a contradiction assume  that $e_{t_1}^{(n)}(\boldsymbol{z})=0$ for all $n$--subsequences $\boldsymbol{z}$  of $(y_1 \klk y_{n+2})$. Let $\boldsymbol{z}'=(z'_1 \klk z'_{n-1})$  be any $(n-1)$--subsequence of $(y_1 \klk y_{n+2})$.  Let $s_1,s_2$ be any two distinct elements of the set $\{y_1 \klk y_{n+1}\}\setminus \{z'_1 \klk z'_{n-1}\}$. Then $(\boldsymbol{z}', s_i)$ is a permutation of an $n$--subsequence of $(y_1 \klk y_{n+2})$ and by our assumption we have $e_{t_1}^{(n)}(\boldsymbol{z}', s_i)=0$  for $i=1,2$. By Lemma \ref{lowering the degree} it follows that $e_{t_1-1}^{(n-1)}(\boldsymbol{z}')=0$. We see that repeated application of  Lemma \ref{lowering the degree} leads to $e_{0}^{(n-t_1)}(\boldsymbol{w})=0$ with $\boldsymbol{w}$  a $(n-t_1)$--subsequence  of $(y_1 \klk y_{n+2})$. Since $e_{0}^{(n-t_1)}=1$, this is a contradiction. This  shows that the Claim holds for $N=1$.

Now, let $N> 1$ and assume that the Claim is true for $N-1$.

First suppose that $t_N=0$.
Then the last row of the determinant in \eqref{eq:det_claim} has a $1$ in its $N\times N$ entry and zeros elsewhere. Thus, the determinant in \eqref{eq:det_claim} is equal to the $(N-1) \times (N-1)$ determinant
$$
\det \big( e_{t_i-i+j}^{(n)}\big)_{1\le i, j\le N-1}=\begin{vmatrix}
    e_{t_1}^{(n)} & e_{t_1+1}^{(n)} & \cdots & e_{t_1 + N - 2}^{(n)} \\
    \vdots & \ddots & & \vdots \\
    e_{t_{N-1} - (N-2)}^{(n)} & \cdots & & e_{t_{N-1} }^{(n)}
\end{vmatrix}.
$$
By the inductive hypothesis this determinant does not vanish at all $n$-subsequences of $(y_1 \klk y_{n+N})$ and \emph{a fortiori} at all $n$-subsequences of $(y_1 \klk y_{n+N+1})$.

Next, suppose that $t_N\geq 1$.
First consider the case  $n=t_N$. Then $n=t_1=\cdots =t_N$ and the determinant in \eqref{eq:det_claim} is of the form
$$
\det \big( e_{n-i+j}^{(n)}\big)_{1\le i, j\le N}=\begin{vmatrix}
    e_{n}^{(n)} & e_{n+1}^{(n)} & \cdots & e_{n + N - 1}^{(n)} \\
    \vdots & \ddots & & \vdots \\
    e_{n - (N-1)}^{(n)} & \cdots & & e_{n }^{(n)}
\end{vmatrix}.
$$
This  determinant is lower triangular. Then it is equal to the product $(e_{n }^{(n)})^N$ of the elements in the main diagonal.
We have
$$
(e_{n }^{(n)})^N(\boldsymbol{z})=(z_1 \cdot \ldots \cdot z_n)^N\neq 0
$$
for any $n$--subsequence $\boldsymbol{z}$ of  $(y_1 \klk y_{n+N+1})$.

Then consider the case $n> t_N$. Toward a contradiction assume that the determinant in \eqref{eq:det_claim} vanish at all  $n$--subsequences $\boldsymbol{z}$  of  $(y_1 \klk y_{n+N+1})$.
Let $\boldsymbol{z}'=(z'_1, \ldots, z'_{n-1})$ be any  $(n-1)$--subsequence  of
$(y_1 \klk y_{n+N+1})$.  Let $s_1 \klk s_{N+1}$ be any $N+1$ elements of the set $\{y_1 \klk y_{n+N+1}\}\setminus \{z'_1, \ldots, z'_{n-1}\}$ and set  $\boldsymbol{z}_i=(\boldsymbol{z}',s_i)$ for $1\le i \le N+1$.  The $\boldsymbol{z}_i$ are permutations of $n$--subsequences of $(y_1 \klk y_{n+N+1})$. Thus, by our assumtion the determinant in \eqref{eq:det_claim} vanish at $\boldsymbol{z}_1 \klk \boldsymbol{z}_{N+1}$. By Lemma \ref{lowering the degree} it follows that
$$
\det \big( e_{(t_i-1)-i+j}^{(n-1)}\big)_{1\le i, j\le N}(\boldsymbol{z}')=\begin{vmatrix}
    e_{t_1-1}^{(n-1)} & e_{(t_1-1)+1}^{(n-1)} & \cdots & e_{(t_1-1) + N - 1}^{(n-1)} \\
    \vdots & \ddots & & \vdots \\
    e_{(t_N-1) - (N-1)}^{(n-1)} & \cdots & & e_{t_N-1 }^{(n-1)}
\end{vmatrix}(\boldsymbol{z}')=0
$$
for any  $(n-1)$--subsequence $\boldsymbol{z}'$  of
$(y_1 \klk y_{n+N+1})$.

Continuing in this way, by repeated application of Lemma \ref{lowering the degree}, we finally obtain that
$$
\det \big( e_{t_i-t_N-i+j}^{(n-t_N)}\big)_{1\le i, j\le N}(\boldsymbol{w})=\begin{vmatrix}
    e_{t_1-t_N}^{(n-t_N)} & e_{t_1-t_N+1}^{(n-t_N)} & \cdots & e_{t_1-t_N + N - 1}^{(n-t_N)} \\
    \vdots & \ddots & & \vdots \\
    e_{- (N-1)}^{(n-t_N)} & \cdots & & e_{0}^{(n-t_N)}
\end{vmatrix} (\boldsymbol{w})=0
$$
for any $(n-t_N)$--subsequence $\boldsymbol{w}$  of $(y_1 \klk y_{n+N+1})$. The last row of the previous determinant has a $1$ in its $N\times N$ entry and zeros elsewhere.  It follows  that
$$
\det \big( e_{t_i-t_N-i+j}^{(n-t_N)}\big)_{1\le i, j\le N-1}(\boldsymbol{w})=\begin{vmatrix}
    e_{t_1-t_N}^{(n-t_N)} & e_{t_1-t_N+1}^{(n-t_N)} & \cdots & e_{t_1-t_N + N - 1}^{(n-t_N)} \\
    \vdots & \ddots & & \vdots \\
    e_{t_{N-1}-t_{N}-N+2}^{(n-t_N)} & \cdots & & e_{t_{N-1}-t_N}^{(n-t_N)}
\end{vmatrix} (\boldsymbol{w})=0
$$
for any $(n-t_N)$--subsequence $\boldsymbol{w}$ of  $(y_1 \klk y_{n+N+1})$. Since $n-t_N$ is a positive integer and $n-t_N\geq t_1-t_N\geq \ldots \geq t_{N-1}-t_N\geq 0$, we have a contradiction with our inductive hypothesis and the Claim follows.
\end{proof}

We start by obtaining an upper bound for the dimension of set $C$ of \eqref{Conjunto C}.

\begin{lemma} \label{M=0}  Let the hypotheses as in Lemma \ref{cada m+M}.
 Then the set $C$ of \eqref{Conjunto C}  has dimension at most $m+G-1$. In particular, the singular locus of $\mathcal{V}$ has dimension at most $m+G-1$.
\end{lemma}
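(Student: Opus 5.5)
By Lemma \ref{cada m+M}, every point $\boldsymbol{x}\in C$ has at most $m+G-1$ pairwise distinct nonzero coordinates. Indeed, if there were $m+G$ distinct nonzero values among the entries of $\boldsymbol{x}$, choosing one coordinate carrying each value would give a subsequence of length $m+G$ with nonzero, pairwise distinct entries, contradicting the lemma. The plan is to cover $C$ by finitely many linear subspaces of $\A^n$, each of dimension at most $m+G-1$, and conclude from the fact that the dimension of a finite union of varieties is the maximum of their dimensions.

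For a point $\boldsymbol{x}\in C$ consider the partition of $\{1,\ldots,n\}$ into the blocks $Z:=\{i: x_i=0\}$ and the classes $I_1,\ldots,I_k$ of indices with equal nonzero value, where $k\le m+G-1$. Given any set partition $\pi=\{Z,I_1,\ldots,I_k\}$ of $\{1,\ldots,n\}$ with $Z$ possibly empty and $k\le m+G-1$, let
\[
L_\pi:=\{\boldsymbol{y}\in\A^n:\ y_i=0 \text{ for } i\in Z,\ y_i=y_j \text{ for } i,j \text{ in the same } I_l\}.
\]
This is a linear subspace of dimension $k\le m+G-1$, parametrized by one free coordinate per block $I_l$. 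Each $\boldsymbol{x}\in C$ lies in $L_\pi$ for its associated $\pi$, and there are finitely many such partitions. Hence $C\subset\bigcup_\pi (C\cap L_\pi)$, which is a finite union of closed subvarieties each of dimension at most $m+G-1$. Therefore $\dim C\le m+G-1$.

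For the final assertion, I will show that $\mathrm{Sing}(\mathcal V)\subset C$, as noted before the lemma. Let $\boldsymbol{x}\in\mathcal V\setminus C$. Since the Jacobian has rank $m$ at $\boldsymbol{x}$, the Jacobian criterion makes $\mathcal V$ smooth of dimension $n-m$ at $\boldsymbol{x}$. It follows that $\dim \mathrm{Sing}(\mathcal V)\le\dim C\le m+G-1$.

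I do not expect a real obstacle. The only point needing care is the pigeonhole translation of Lemma \ref{cada m+M}, which is phrased in terms of subsequences, into the bound on the number of distinct nonzero values. If $C$ is empty the bound holds trivially.
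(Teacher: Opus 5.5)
Your proof is correct, but it is organized differently from the paper's.

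The paper splits $C$ according to the number $i$ and the positions $\bfs{\beta}$ of the zero coordinates. When $i\ge n-m-G+1$ it simply uses $\dim\mathcal{L}_{\bfs\beta}=n-i$. When $i\le n-m-G$ it applies Lemma~\ref{cada m+M} repeatedly to a sliding window of $m+G$ nonzero coordinates, discarding one index of each equal pair it finds. This produces $n-i-m-G+1$ equations $X_{\alpha_{2j-1}}=X_{\alpha_{2j}}$, which together with the $i$ zero equations are independent by construction and cut out linear varieties of dimension $m+G-1$.

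You instead convert the lemma once, by pigeonhole, into the statement that a point of $C$ takes at most $m+G-1$ distinct nonzero values. You then cover $C$ by the partition subspaces $L_\pi$, whose dimension is just the number of nonzero blocks. This removes the case distinction and the index bookkeeping needed to see that the equations are independent.

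What the paper's version buys is an explicit family of linear varieties $\mathcal{T}_\alpha$. These are reused in Case~3 of Theorem~\ref{dimension singular V M=0}, where substituting the equalities must yield coefficients $k\cdot 1$ with $1\le k\le n-i-m-G+2$. Your cover would serve equally well there. When $n-i\ge m+G-1$, any $L_\pi$ is contained in one with exactly $m+G-1$ nonzero blocks, since refining blocks only enlarges the subspace. Each block then has size at most $n-i-m-G+2$, giving the same coefficient bound.

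Your justification of $\mathrm{Sing}(\mathcal V)\subset C$, which the paper only asserts, is also fine. It amounts to $T_{\bfs x}\mathcal V\subseteq\ker\mathrm{J}(f_1,\ldots,f_m)(\bfs x)$ together with $\dim_{\bfs x}\mathcal V\ge n-m$.
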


\begin{proof}
We consider the partition
$
C= \bigcup_{i=0}^n C_i,
$
where  $C_i$ is the set of points $\bfs{x}\in C $  having  exactly $i$ zero coordinates.
For $0\le i\le n$ we write
\[
C_i=\bigcup_{\substack{\bfs{\beta}=(\beta_1,\ldots,\beta_i)\\
1\le \beta_\ell\le n\\
\beta_\ell\neq \beta_h\ (\ell\neq h)}}
\bigl(C_i\cap \mathcal{L}_{\bfs{\beta}}\bigr),
\qquad
\mathcal{L}_{\bfs{\beta}}:=\{X_{\beta_1}=\cdots=X_{\beta_i}=0\}.
\]

We shall prove that $\dim C_i\cap \mathcal{L}_{\bfs{\beta}}\leq m+G-1$ for all $i$ and $\bfs{\beta}$, from which the lemma follows.

We consider the following two cases:

\medskip

\begin{enumerate}[
    label=\textbf{Case \arabic*:},
    leftmargin=0pt,
    labelsep=0pt,
    itemsep=1em,
    align=left,
    wide
]
\item \textbf{ $n-m-G+1 \leq i$.}
Then
\[
\dim C_i\cap \mathcal{L}_{\bfs{\beta}} \le \dim \mathcal{L}_{\bfs{\beta}} = n-i \le m+G-1.
\]

\medskip
\item \textbf{ $0\le i \le n-m-G$.}

Without loss of generality, assume that $\bfs{\beta}=(n-i+1,\ldots,n)$. Then the first $n-i$ coordinates $x_1, \ldots, x_{n-i}$ of any point $\bfs{x}\in C_i \cap \mathcal{L}_{\bfs{\beta}}$ are nonzero. Since $n-i\geq m+G$, the coordinates $x_1,\dots,x_{m+G}$ of $\bfs{x}$ are nonzero.
By Lemma \ref{cada m+M}, there exist distinct indexes $\alpha_1,\alpha_2\in \{1,\ldots, m+G\}$  such that $x_{\alpha_1}=x_{\alpha_2}$.
Next, if $n-i\geq m+G+1$, we exclude $x_{\alpha_1}$ and consider the $m+G$ nonzero coordinates $x_1,\dots, \widehat{x_{\alpha_1}},\ldots,x_{m+G+1}$ of $\bfs{x}$. By Lemma \ref{cada m+M}  there exist distinct   indexes $\alpha_3, \alpha_4 \in \{1,\ldots,{m+G+1}\}\setminus \{\alpha_1\}$ such that
$x_{\alpha_3} = x_{\alpha_4}$. After $n-i-m-G+1$ steps we obtain distinct indexes $\alpha_{2(n-i-m-G+1)-1}, \alpha_{2(n-i-m-G+1)}\in \{1,\ldots,n-i\}\setminus \{\alpha_1,\alpha_3,\ldots,\alpha_{2(n-i-m-G)-1}\}$ such that $$x_{\alpha_{2(n-i-m-G+1)-1}}=x_{\alpha_{2(n-i-m-G+1)}}.$$
This shows that $C_i\cap\mathcal{L}_{\bfs{\beta}}$ is contained in the (finite) union of linear varieties, each one of which is defined by a system of equations of the form
\begin{equation}\begin{cases}\label{ecuaciones iguales dos a dos}
    X_{\alpha_{2j-1}} = X_{\alpha_{2j}}, \quad\quad 1\leq j \leq n-i-m-G+1\\
    \\
    X_{n-i+1}=\cdots=X_n=0
\end{cases}.
\end{equation}
Here $\alpha_{2j-1}, \alpha_{2j}$, $1\leq j \leq n-i-m-G+1$, run over all possible choices of indexes in  $\{1, \ldots, n-i\}$ satisfying the conditions $\alpha_{2j-1}\neq \alpha_{2j}$ and $\alpha_{2j-1},\alpha_{2j}\in \{1, \ldots, m+G+j-1\}\setminus \{\alpha_{1}, \ldots, \alpha_{2j-3}\}$ for $1\leq j \leq n-i-m-G+1$.
Then by construction system \eqref{ecuaciones iguales dos a dos} is given by  $n-m-G+1$ independent equations and so it defines a variety of dimension $m+G-1$ in $\A^n$.  Hence $\dim(C_i\cap \mathcal{L}_{\bfs{\beta}})\le m+G-1$ and the lemma.
\end{enumerate}
\end{proof}

\begin{theorem} \label{teo: V es interseccion completa} Let the hypotheses as in Lemma \ref{cada m+M}.  If $2m+G-1\le n$ then the polynomials $f_1, \ldots, f_m$ form a regular sequence of $\fq[X_1,\ldots,X_n]$.  Further, if $2m+G\le n$ then $f_1, \ldots, f_m$ is a reduced regular sequence of $\fq[X_1,\ldots,X_n]$.
\end{theorem}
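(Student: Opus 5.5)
To prove the regularity statement I would pass to the homogeneous components of highest degree and use Lemma \ref{lemma: comp hom de mayor grado suc reg}. The leading forms of $f_1,\ldots,f_m$ are $F_j:=a_1X_1^{d_j}+\cdots+a_nX_n^{d_j}$, $1\le j\le m$. These are homogeneous, so by Lemma \ref{lemma: reg seq and complete int} it suffices to show that $W:=V(F_1,\ldots,F_m)\subset\A^n$ has dimension $n-m$. Each component of $W$ has dimension at least $n-m$, so only the upper bound $\dim W\le n-m$ is needed.

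To bound $\dim W$ I would use the Jacobian. The system $F_1,\ldots,F_m$ is of the form \eqref{eq:intro-system} with $b_j=0$, so Lemmas \ref{cada m+M} and \ref{M=0} apply to it. Hence the set $C_W$ of points of $W$ where the Jacobian has rank less than $m$ satisfies $\dim C_W\le m+G-1$. Now let $\mathcal{C}$ be an irreducible component of $W$, and suppose for contradiction that $\dim\mathcal{C}>n-m$.
\begin{itemize}
\item Since $m+G-1\le n-m$, which is exactly the hypothesis $2m+G-1\le n$, $\mathcal{C}$ is not contained in $C_W$.
\item So $\mathcal{C}$ contains a point $\boldsymbol{x}$ at which the Jacobian has full rank $m$. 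Then $\dim T_{\boldsymbol{x}}W\le n-m$ computed with the $F_j$. This tangent space contains the Zariski tangent space of $W$, which has dimension at least $\dim_{\boldsymbol{x}}W\ge\dim\mathcal{C}>n-m$, a contradiction.
\end{itemize}
Hence $W$ has pure dimension $n-m$, so $F_1,\ldots,F_m$ is a regular sequence, and by Lemma \ref{lemma: comp hom de mayor grado suc reg} so is $f_1,\ldots,f_m$. The same argument applies to every initial subsequence $f_1,\ldots,f_s$ with $s\le m$, using the corresponding $m$, $G$ (both smaller), and regularity of the whole sequence already gives this.

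For the reducedness claim, assume $2m+G\le n$. I would apply Theorem \ref{theorem: eisenbud 18.15} to each truncation $f_1,\ldots,f_s$, $1\le s\le m$. The truncation is regular by the first part (or because subsequences of a regular sequence are regular). Its degeneracy set $C^{(s)}$ has dimension at most $s+G_s-1$ by Lemma \ref{M=0}, where $G_s=d_s-d_1-s+1\le G$. The hypothesis gives $s+G_s-1\le n-s-1<n-s=\dim V(f_1,\ldots,f_s)$, since $s+G_s\le m+G$ and $2s+G_s\le 2m+G\le n$. So $C^{(s)}$ has codimension at least one in $V(f_1,\ldots,f_s)$, each ideal $(f_1,\ldots,f_s)$ is radical, and the sequence is reduced regular. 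For $s=1$ I would treat the case directly: $G_1=0$, and $C^{(1)}$ is contained in $\{\boldsymbol{x}=0\}$, of dimension $0<n-1$.

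The main subtlety is the tangent-space comparison when $W$ is not known to be reduced. It is handled by the inequality $\dim T_{\boldsymbol{x}}\ge\dim_{\boldsymbol{x}}$ applied to the radical of the ideal, whose tangent space is contained in the kernel of $\mathrm{J}(F_1,\ldots,F_m)(\boldsymbol{x})$. Checking that Lemma \ref{M=0} applies to the homogeneous and truncated systems is routine, since it holds for arbitrary $b_j$ and any increasing exponents not divisible by $p$.
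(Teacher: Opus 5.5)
Your proof is correct and follows the paper's argument essentially step by step. You pass to the leading forms, bound the rank-deficient locus by Lemma \ref{M=0}, use the tangent-space inequality to get pure dimension $n-m$, lift regularity via Lemma \ref{lemma: comp hom de mayor grado suc reg}, and then apply Theorem \ref{theorem: eisenbud 18.15} to each truncation using $2s+G_s\le n$. The only deviation is at $s=1$: you apply the Jacobian criterion directly, since the degeneracy locus lies in $\{\boldsymbol{0}\}$, whereas the paper proves $f_1$ square-free; both are valid.
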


\begin{proof} Let $h_1, \ldots, h_m$ be the homogeneous components of higher degree $h_j=a_1X_1^{d_j}+ \cdots +a_nX_n^{d_j}$, $1\le j \le m$, of $f_1, \ldots, f_m$. We claim that $h_1, \ldots, h_m$ is a regular sequence of $\fq[X_1,\ldots,X_n]$. By Lemma \ref {lemma: reg seq and complete int} it is enough to show that the affine variety $V(h_1, \ldots, h_m)$ is a set--theoretic complete intersection of $\A^n$. To this aim consider any irreducible component  $V'$ of $V(h_1, \ldots, h_m)$. We have $\dim V'\geq n-m$ (see, e.g, \cite[Chapter 5, Theorem 3.4]{Kunz85}). To prove that $\dim V'\le n-m$ consider the set
$$
C':=\left\{\boldsymbol{x}\in V':\mathrm{rank}\, \mathrm{J}(h_1\klk h_m)(\boldsymbol{x})<m\right\}.
$$
Since the sequence $h_1 \klk h_m$ has the same degree pattern $(d_1 \klk d_m)$ as $f_1 \klk f_m$, by Lemma \ref{M=0} we deduce that $\dim C'\leq  m+G-1$.

Let $n\geq 2m+G-1$. If $C' \subsetneq V'$, then there exists some $\boldsymbol{p}\in V'$ with $\mathrm{rank}\, \mathrm{J}(h_1\klk h_m)(\boldsymbol{p})=m$.
As $T_{\boldsymbol{p}} V(h_1, \ldots, h_m)\subset \mathrm{ker}\left(\mathrm{J}(h_1\klk h_m)(\boldsymbol{p})\right)$
we deduce that $\dim T_{\boldsymbol{p}} V(h_1, \ldots, h_m)\leq n-m$.
Then $\dim V'\leq \dim_{\boldsymbol{p}}V(h_1, \ldots, h_m) \leq n-m$.  On the other hand, if $C' = V'$, then $\dim V'=\dim C'\leq m+G-1\leq n-m$. Thus we have  $\dim V'\le n-m$ in any case.  We conclude that, for $2m+G-1\leq n$, the affine variety $V(h_1, \ldots, h_m)$ is of pure dimension $n-m$, which proves our claim. By  Lemma \ref{lemma: comp hom de mayor grado suc reg} this implies  that $f_1,\ldots,f_m$ also is a regular sequence. This proves the first assertion of the theorem.

 Finally, let $2m+G\leq n$. It remains to prove that $(f_1\klk f_s)$ is a radical ideal for $1\le s \le m$. First, $(f_1)$ is a radical ideal because $f_1$ is square-free. Indeed if $f_1=g^2h$ with $g$ and $h$ in $\fq[X_1 \klk X_n]$ and $g$ irreducible, then $g$ divides both $\partial f_1/\partial X_1=d_1X_1^{d_1-1}$ and $\partial f_1/\partial X_2=d_1X_2^{d_1-1}$. This is impossible, since $d_1 \neq 0$ in $\fq$ by the hypothesis $p \nmid d_1$. Now, fix $s$ with $2\le s \le m$. In order to prove that $(f_1,\ldots,f_s)$ is a radical ideal, consider the subset  $C_s$ of the affine variety $V(f_1,\ldots,f_s)\subset \A^n$ given by
 $$
 C_s:=\left\{\boldsymbol{x}\in V(f_1,\ldots,f_s):\mathrm{rank}\, \mathrm{J}(f_1\klk f_s)(\boldsymbol{x})<s\right\}.
 $$
 Recall that $G=\sum_{j=2}^{m}g_j$, where $g_j=d_j-d_{j-1}-1\geq 0$ for $2\le j \le m$. Set $G_s:=\sum_{j=2}^{s}g_j$. As $G_s\le G$  the hypothesis $2m+G\le n$ implies that $2s+G_s\le n$. Thus, by Lemma \ref{M=0} we deduce that $\dim C_s\leq  s+G_s-1$.
 Therefore, for any irreducible component $W$ of the affine variety $V(f_1,\ldots,f_s)\subset \A^n$, we have that
$$
    \dim W-\dim C_s\cap W\geq \dim W-\dim C_s\geq n-s-(s+G_s-1)\geq 1.
$$
 In other words, $C_s$ has positive codimension in each irreducible component of $V(f_1,\ldots,f_s)$.
 By Theorem \ref{theorem: eisenbud 18.15} this implies  that $(f_1,\ldots,f_s)$  is a radical ideal, which concludes the proof.
\end{proof}

\begin{theorem} \label{dimension singular V M=0} Let the hypotheses as in Lemma \ref{cada m+M}. Assume further that in the polynomials of \eqref{eq:intro-system} we have  $a_i=1$ for $1\le i \le n$ and let $p> n-m-G+2$. Then the dimension of $C$ is at most $\frac{m-1}{2} +G$. In particular, the dimension of $\mathrm{Sing}(\mathcal{V})$, the singular locus of $\mathcal{V}$,
is at most $\frac{m-1}{2} +G$.
\end{theorem}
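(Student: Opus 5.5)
The plan is to stratify $C$ by the \emph{value pattern} of its points and bound the dimension of each stratum separately; this works only if the multiplicities stay nonzero modulo $p$, which is the step I am least sure of. Let $\boldsymbol{x}\in C$. By Lemma \ref{cada m+M}, the nonzero coordinates of $\boldsymbol{x}$ take at most $m+G-1$ distinct values. So $\boldsymbol{x}$ is determined by:
\begin{enumerate}
\item[(a)] a set $\beta$ of zero positions;
\item[(b)] a partition of the remaining positions into $k\le m+G-1$ blocks, with multiplicities $n_1,\ldots,n_k$;
\item[(c)] pairwise distinct nonzero values $v_1,\ldots,v_k$, one per block.
\end{enumerate}
There are finitely many choices in (a) and (b). For each one, the corresponding piece of $C$ is the image under a linear injection $\A^k\to\A^n$ of
\[
S:=\Big\{\boldsymbol{v}\in\A^k:\ F_j(\boldsymbol{v}):=\sum_{l=1}^k n_l v_l^{d_j}=b_j\ (1\le j\le m),\ \mathrm{rank}\,\mathrm{J}(f_1,\ldots,f_m)(\boldsymbol{x}(\boldsymbol{v}))<m\Big\}.
\]
Here we use $a_i=1$, so that coordinates sharing a value can be merged. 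It then suffices to show $\dim S\le \frac{m-1}{2}+G$ for each pattern. If $k\le \frac{m-1}{2}+G$ this is trivial, so assume $k>\frac{m-1}{2}+G$.

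For such $k$, I would bound $\dim_{\boldsymbol{v}}S$ by the dimension of the Zariski tangent space of the fibre $F^{-1}(\boldsymbol{b})$ at $\boldsymbol{v}$, which is contained in $\ker \mathrm{J}(F_1,\ldots,F_m)(\boldsymbol{v})$. The $l$-th column of $\mathrm{J}(F)(\boldsymbol{v})$ is $n_l\,(d_jv_l^{d_j-1})_j$, i.e.\ $n_l$ times the column of $\mathrm{J}(f)(\boldsymbol{x})$ attached to the value $v_l$. Granting that every $n_l\not\equiv 0 \pmod p$, the rank $r$ of $\mathrm{J}(F)(\boldsymbol{v})$ equals the rank of the $m\times k$ matrix $\big(v_l^{d_j-d_1}\big)_{j,l}$, since $d_j\ne0$ in $\fq$ and $v_l\neq 0$.

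I would lower-bound $r$ by a sparse-polynomial argument. The row relations of this matrix are the coefficient vectors of polynomials $\sum_j c_jX^{d_j-d_1}$ vanishing at $v_1,\ldots,v_k$, and they form a space of dimension $s=m-r$. Gaussian elimination on the monomial basis produces a nonzero element of this space whose lowest exponent is at least the $s$-th smallest of the $d_j-d_1$. Its degree is at most $m+G-1$, so after dividing by a power of $X$ it has at most $m+G-s$ nonzero roots. Hence $k\le m+G-s$, i.e.\ $r\ge k-G$, and therefore $\dim S\le k-r\le G\le\frac{m-1}{2}+G$.

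The main obstacle is justifying $n_l\not\equiv 0\pmod p$; this is where $p>n-m-G+2$ must enter. If some $n_l\ge p>n-m-G+2$, then the other blocks together with the zeros occupy fewer than $m+G-2$ positions. I would treat this case directly: the coordinates in the large block contribute the constant column $0$, so that block drops out of the equations and of the Jacobian. The remaining free values then number at most $n-p<m+G-2$, and the count in the previous paragraph would have to be redone with the relevant block removed, choosing the case split on $k$ so that the bound $\frac{m-1}{2}+G$ comes out. I expect the factor $\frac{m-1}{2}$ to arise precisely from this balancing between the number of distinct values and the rank deficiency $<m$ imposed by membership in $C$. If that balancing fails, the fallback is a pairing argument in the spirit of Lemma \ref{M=0}: each repeated value forces an extra linear equation $X_{\alpha}=X_{\alpha'}$, and one counts at least $\frac{m-1}{2}$ additional independent such equations beyond those used in Lemma \ref{M=0}.
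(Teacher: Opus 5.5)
Your main case is sound, and it is stronger than needed. If no block multiplicity $n_l$ is divisible by $p$, the sparse-polynomial bound gives $r\ge k-G$; when $s=0$ one has $r=m$, and this still suffices because $k\le m+G-1$. Hence $\dim S\le G$ on every such stratum.

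The gap is the complementary case. That is exactly where the hypothesis $p>n-m-G+2$ and the term $\frac{m-1}{2}$ enter, and there you only state an expectation plus a fallback, which is not an argument. Moreover, your sketch speaks of ``the relevant block'', but several blocks can have multiplicity divisible by $p$ under the hypotheses. For example, take $p=3$, $d=(1,2,7)$ (so $m=3$, $G=4$), $n=6$, $b=0$. The points $\boldsymbol{x}=(u,u,u,w,w,w)$ form a $2$-dimensional stratum of $C$ on which both blocks drop out. Removing one block and reusing your rank bound naively gives only $1+G$. That is not $\le\frac{m-1}{2}+G$ when $m=2$, and it does not address $t\ge 2$ vanishing blocks at all. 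As written, this case is unproved.

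The balancing you anticipate does close the gap, using only your own tools.
\begin{itemize}
\item Let $t\ge 1$ be the number of blocks with $p\mid n_l$, and $k'$ the number of remaining blocks.
\item Each vanishing block occupies at least $p$ positions, and $p\ge n-m-G+3$. So $tp+k'\le n\le p+m+G-3$, whence $k'\le m+G-3-(t-1)p\le m+G-1-2t$.
\item The $t$ vanishing values are free. On the remaining $k'$ coordinates all multiplicities are nonzero in $\fq$, so your rank argument bounds the local fibre dimension by $\min(k',G)$; when $s=0$ one has $k'-m<G$.
\item Hence $\dim S\le t+\min(k',G)\le\min(t+G,\,m+G-1-t)\le\frac{m-1}{2}+G$.
\end{itemize}

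With this added, your proof is complete and genuinely different from the paper's. The paper stratifies only by zero positions and imposes just $n-i-m-G+1$ pairings, so every merged coefficient is at most $n-i-m-G+2<p$; this is how it sidesteps your multiplicity issue. It then obtains $\frac{m-1}{2}$ by keeping only the first $\lfloor m/2\rfloor$ equations of the merged system and invoking Theorem \ref{teo: V es interseccion completa}. Your route avoids that theorem, and it shows $\dim C\le G$ whenever $p>n$.
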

\begin{proof}  As in the proof of Lemma \ref{M=0} we consider the decomposition
\[
C = \bigcup_{i=0}^n \bigcup_{\substack{{\bfs{\beta}}=(\beta_1,\ldots,\beta_i)\\
1\le \beta_\ell\le n\\
\beta_\ell\neq \beta_h\ (\ell\neq h)}}
\bigl(C_i\cap \mathcal{L}_{\bfs{\beta}}\bigr),
\]
and prove that $\dim C_i\cap \mathcal{L}_{\beta}\leq \frac{m-1}{2} +G$ for all $i$ and ${\bfs{\beta}}$.

Fix $0\le i \le n$ and, without loss of generality, assume that ${\bfs{\beta}}=(n-i+1,\ldots, n)$.

Note that, as ${\mathcal{L}}_{{\bfs{\beta}}}=\{X_{n-i+1}=0, \ldots, X_{n}=0\}$, $C_i\cap {\mathcal{L}}_{{\bfs{\beta}}}$ is embedded  in the variety  $W \subset \A^{n-i}$ defined by the system:
\begin{equation} \label{sistema M=0}
\left\{
        \begin{array}{l}
          X_{1}^{d_1} + X_{2}^{d_1} + \cdots + X_{{n-i}}^{d_1} = b_1 \\
          \hspace{40pt} \vdots \\
              X_{1}^{d_m} + X_{2}^{d_m} + \cdots + X_{n-i}^{d_m}  = b_m.
        \end{array} \right.
\end{equation}

Observe that when $i=n$ or $i=n-1$, $\mathcal{V}\cap \mathcal{L}_\beta$ and a fortiori $C_i \cap \mathcal{L}_\beta$ is empty or zero-dimensional.

We consider the following three cases:
\begin{enumerate}[
    label=\textbf{Case \arabic*:},
    leftmargin=0pt,
    labelsep=0pt,
    itemsep=1em,
    align=left,
    wide
]

\item \leavevmode \,    $n-G\le i \le n-2$.

Since $W$ is a proper subvariety of $\A^{n-i}$, we have $$\dim C_i \cap \mathcal{L}_\beta \leq \dim W \leq n-i-1 \le G-1.$$

\item \leavevmode \,    $n-m-G+1\le i \le n-G-1$.

 Let $W'\subset \A^{n-i}$ be the variety defined by the first $\ell:=\lfloor \frac{n-i-G+1}{2} \rfloor$  equations of system \eqref{sistema M=0}.
 If $\ell=1$ then $W'$ is an hypersurface of $\A^{n-i}$ so, its dimension is $n-i-1=n-i-\ell$. Assume that $\ell \geq 2$.
  Set $G':=d_{\ell}-d_1-\ell+1$. Since $G'\leq G$, we have
   $$
   2\ell+G'-1\leq 2\ell +G-1\le n-i.
   $$
   By Theorem \ref{teo: V es interseccion completa}, it follows that $\dim W'=n-i-\ell$. Hence,
  \begin{alignat*}{3}
    \dim W'& = n-i- \left\lfloor\frac{n-i-G+1}{2}\right \rfloor \\
     & =\left \lceil\frac{n-i-G+1}{2}\right \rceil +G-1 \\
      & \leq \left \lceil\frac{m}{2}\right \rceil +G-1 \\
       & \leq \frac{m-1}{2} +G
  \end{alignat*}
Thus $$\dim C_i\cap {L} _{\beta}\le \dim W \le \dim W' \le \frac{m-1}{2} +G.$$

\item \leavevmode \,  $0\leq i \leq n-m-G$.

As in the previous case, we identify $C_i \cap {\mathcal{L}}_\beta$  with a subset of the variety $W \subset \A^{n-i}$ defined by \eqref{sistema M=0}. Thus, by \textbf{Case 2} of the proof of Lemma $\ref{M=0}$, $C _i \cap {\mathcal{L}}_\beta$ is included in the union of the  linear varieties of $\A^{n-i}$ of the form  $\mathcal{T}_{\alpha}=\{X_{\alpha_{2j-1}} -X_{\alpha_{2j}}=0,  1 \leq j \leq n - i - m - G + 1\}$,  with $\alpha_{2j-1}, \alpha_{2j}, 1 \leq j \leq n - i - m-G + 1$ any choice of indexes as in \eqref{ecuaciones iguales dos a dos}. Thus, it is enough to prove that $\dim C_i \cap {\mathcal{L}}_\beta \cap \mathcal{T}_{\alpha}\le \frac{m-1}{2}+G$ for any such variety $\mathcal{T}_{\alpha}$. Since $C_i \cap {\mathcal{L}}_\beta \cap \mathcal{T}_{\alpha}\subseteq W \cap \mathcal{T}_{\alpha}$ we shall show that $\dim W \cap \mathcal{T}_{\alpha} \le \frac{m-1}{2}+G$. For this purpose, we  substitute $X_{\alpha_{2j-1}}$ for $X_{\alpha_{2j}}$ in the equations of \eqref{sistema M=0} successively for all $1 \leq j \leq n - i - m-G + 1$.
Note that performing a substitution in an intermediate system has the effect of eliminating one variable and of adding $k\cdot 1$, for some $1\leq k\leq n-i-m-G+2$, to a coefficient of the system.

Since all coefficients in  system \eqref{sistema M=0} are equal to $1$ after performing these $n-i-m-G+1$ substitutions we obtain a system  of the form
\begin{equation} \label{sistema M=0 bis}
\left\{
        \begin{array}{l}
             c_1X_{\beta_1}^{d_1} + \cdots + c_{m+G-1}X_{\beta_{m+G-1}}^{d_1} = b_1 \\
             \hspace{40pt} \vdots \\
             c_1X_{\beta_1}^{d_m}+ \cdots + c_{m+G-1}X_{\beta_{m+G-1}}^{d_m}  = b_m,  \end{array} \right.
    \end{equation}
where $\beta_1, \ldots, \beta_{m+G-1}$ are distinct indexes of $\{1, \ldots, n - i\}$ and all coefficients $c_1, \ldots, c_{m+G-1}$ are integral multiples of $1$  of the form  $c_j=k_j \cdot 1$ with  $1\le k_j \le n-i-m-G+2$ for all $j$.  Since by hypothesis the characteristic $p$ of $\fq$ satisfies $p>n-m-G+2\geq n-i-m-G+2$, we deduce $c_j\neq 0$ for all $1\le j \le m+G-1$. Further note that by construction for each variable $X_j$, $1\le j \le n-i$, there exists a variable $X_{\beta_l}$ appearing in system \eqref{sistema M=0 bis} such that $X_j=X_{\beta_l}$ on $W \cap \mathcal{T}_{\alpha}$. This implies that $W \cap \mathcal{T}_{\alpha}$ is isomorphic to the variety $W'\subset \A^{m+G-1}$ defined by system \eqref{sistema M=0 bis}.
 We consider the variety $W''\subset \A^{m+G-1}$ defined by the first $\ell:=\left\lfloor \frac{m}{2} \right\rfloor$ equations of system \eqref{sistema M=0 bis}. If $\ell=1$, then $W''$ is a hypersurface of $\mathbb A^{m+G-1}$, and the desired bound follows. Hence, we may assume that $\ell\geq2$.
  Set $G'':=d_{\ell}-d_1-\ell +1$. Since $G''\leq G$, we have
 $$
 2\ell+G''-1\leq m+G-1.
 $$
  By Theorem \ref{teo: V es interseccion completa}, we deduce that
  $$\dim W''= (m+G-1)- \left\lfloor \frac{m}{2}\right\rfloor= \left\lceil \frac{m}{2}\right\rceil + G-1 \le  \frac{m-1}{2}+G.$$
  It follows that  $$\dim  W \cap \mathcal{T}_{\alpha} \leq  \dim W' \le \dim W'' \leq \frac{m-1}{2}+G,$$ which concludes the proof.

\end{enumerate}

\end{proof}

\begin{remark} \label{obeservacion 3. 6}
Under the hypotheses of Theorem \ref{dimension singular V M=0}, if moreover
$m \ge 2G+1,$
then
$
\dim C \le m-1.
$
Indeed, by Theorem \ref{dimension singular V M=0},
$
\dim C \le \frac{m-1}{2}+G \le m-1.$
In particular,
$
\dim \mathrm{Sing}(V) \le m-1.$
\end{remark}
\subsection{The geometry of the projective closure}\label{geo proyectiva}

In the following we analyze the geometric properties of the locus of $\mathrm{pcl}(\mathcal{V})$ at infinity, that is, of the  projective variety
\[
\mathcal{V}^{\infty} = \mathrm{pcl}(\mathcal{V}) \cap \{X_0 = 0\} \subset \mathbb{P}^{n-1}.
\]

Throughout this section  we assume that $a_i=1$ for $1\le i \le n$ in the polynomials $f_1 \klk f_m$ of \eqref{eq:intro-system} which define $\mathcal{V}$.
Thus the homogenizations  $f^{h}_1 \klk f^{h}_m$ of  $f_1 \klk f_m$ with respect to to a new variable $X_0$  are given by
\[
f_j^h := p^{(n)}_{d_j} - b_j X_0^{d_j}, \quad 1 \leq j \leq m,
\]
where
$
p^{(n)}_{d_j} = X_1^{d_j} + \cdots + X_n^{d_j}
$
is the symmetric power sum polynomial of order $d_j$.  We denote by $\mathrm{Sing} (\mathcal{V}^{\infty})$ the singular locus of $\mathcal{V}^{\infty}$.

  \begin{theorem}\label{teo:props_clausura_infinito}
Assume that $a_i=1$ for $1\le i \le n$ in the polynomials of \eqref{eq:intro-system}.  Further, suppose that the following hypotheses hold: the characteristic $p$ of $\fq$ does not divide any $d_i$ for $1\leq i \leq m$,  $p>n-m-G+2$ and $2m+G\leq n  $ if $m\geq 3$ and $n\geq 5+G$ if $m=2$. Then \( \mathcal{V}^{\infty}= \mathcal{V}_{\mathbb{P}^{n-1}}(p^{(n)}_{d_1},\ldots, p^{(n)}_{d_m}) \). Moreover $\mathcal{V}^{\infty}$ is an absolutely irreducible normal complete intersection of dimension $n - m - 1$ and degree $d_1 \cdots d_m$. Further, $\dim \mathrm{Sing} (\mathcal{V}^{\infty})\leq \frac{m-1}{2}+G-1.$
\end{theorem}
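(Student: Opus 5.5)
The plan is to work with the homogeneous system $p^{(n)}_{d_1},\ldots,p^{(n)}_{d_m}$ directly. First, by Theorem \ref{teo: V es interseccion completa} applied with $b_j=0$ and $a_i=1$, these polynomials form a regular sequence in $\fq[X_1,\ldots,X_n]$; here $2m+G-1\le n$ holds in both regimes, and for $m=2$ we have $n\ge 5+G\ge 2m+G$. Second, the same theorem gives that $(p^{(n)}_{d_1},\ldots,p^{(n)}_{d_m})$ is radical, since $2m+G\le n$ in both cases.

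Hence $W:=V_{\Pp^{n-1}}(p^{(n)}_{d_1},\ldots,p^{(n)}_{d_m})$ is a complete intersection of dimension $n-m-1$. By the Bézout theorem it has degree $d_1\cdots d_m$.

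Next we identify $\mathcal{V}^\infty$ with $W$. The inclusion $\mathcal{V}^\infty\subset W$ is immediate, because each $f_j^h$ vanishes on $\mathrm{pcl}(\mathcal{V})$ and $f_j^h(0,X_1,\ldots,X_n)=p^{(n)}_{d_j}$. For the reverse inclusion we use dimensions. By Theorem \ref{teo: V es interseccion completa}, $\mathcal{V}$ has pure dimension $n-m$, so every component of $\mathrm{pcl}(\mathcal{V})$ has dimension $n-m$. Its intersection with the hyperplane $\{X_0=0\}$ therefore has pure dimension $n-m-1$, and it is nonempty since $n-m\ge 1$.

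To obtain equality rather than just containment, I would show that $f_1^h,\ldots,f_m^h$ is a regular sequence in $\fq[X_0,\ldots,X_n]$ whose ideal is radical, so that $\mathrm{pcl}(\mathcal{V})=V(f_1^h,\ldots,f_m^h)$. The regular sequence property follows because $X_0,f_1^h,\ldots,f_m^h$ is regular: modulo $X_0$ we recover the regular sequence $p^{(n)}_{d_j}$, and the sequence is homogeneous, so it can be permuted. Radicality comes from the standard fact that homogenizations of generators with regular leading forms generate the homogenized ideal. Then $\mathcal{V}^\infty=V(X_0,f_1^h,\ldots,f_m^h)=W$. Alternatively, it suffices to note that $W$ is irreducible (shown below), so the pure $(n-m-1)$-dimensional subvariety $\mathcal{V}^\infty$ of $W$ must equal $W$. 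This second argument avoids the radicality issue and is the one I would use.

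It remains to bound the singular locus and deduce normality. Since $W$ is defined by a radical complete intersection ideal, a point $\boldsymbol{x}\in W$ is singular exactly when the Jacobian of $p^{(n)}_{d_1},\ldots,p^{(n)}_{d_m}$ has rank less than $m$ at $\boldsymbol{x}$. Thus the affine cone of $\mathrm{Sing}(W)$ is contained in the set $C$ of \eqref{Conjunto C} for the system with $b_j=0$. By Theorem \ref{dimension singular V M=0}, which applies since $p>n-m-G+2$ and $a_i=1$, this set has dimension at most $\frac{m-1}{2}+G$. Passing to projective space gives $\dim\mathrm{Sing}(W)\le \frac{m-1}{2}+G-1$.

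Normality then requires $(n-m-1)-(\frac{m-1}{2}+G-1)\ge 2$, that is, $n\ge \frac{3m+3}{2}+G$.
\begin{itemize}
\item For $m\ge 3$ this follows from $n\ge 2m+G$.
\item For $m=2$ the requirement reads $n\ge 4.5+G$, which is exactly why the hypothesis $n\ge 5+G$ is imposed.
\end{itemize}
Theorem \ref{theorem: normal complete int implies irred} then gives absolute irreducibility of $W$, and the identification $\mathcal{V}^\infty=W$ follows as above.

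The main obstacle is checking that Theorem \ref{dimension singular V M=0} applies for $m=2$ under $n\ge 5+G$ rather than $2m+G$. Its proof uses Theorem \ref{teo: V es interseccion completa} only for subsystems with $\ell\ge2$ equations, and for $m=2$ those cases reduce to hypersurfaces, so the argument should go through. The other point needing care is the equality $\mathcal{V}^\infty=W$, which I resolve via the irreducibility of $W$.
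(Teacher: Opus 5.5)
Your proposal is correct and follows essentially the same route as the paper: the complete intersection property comes from Theorem \ref{teo: V es interseccion completa} with $b_j=0$, the bound on the rank-deficient cone from Theorem \ref{dimension singular V M=0}, normality and absolute irreducibility from Theorem \ref{theorem: normal complete int implies irred}, and finally $\mathcal{V}^\infty=W$ by comparing dimensions inside the irreducible $W$. The obstacle you flag for $m=2$ does not arise, since Theorem \ref{dimension singular V M=0} imposes no hypothesis on $n$, only $p\nmid d_j$, $a_i=1$ and $p>n-m-G+2$.
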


\begin{proof}
First note that $\mathcal{V}^{\infty}$ is embedded into $V_{\mathbb{P}^{n-1}}(p^{(n)}_{d_1},\ldots, p^{(n)}_{d_m})$ via the obvious isomorphism. We write $\mathcal{V}^{\infty}\subset V^*:=V_{\mathbb{P}^{n-1}}(p^{(n)}_{d_1},\ldots, p^{(n)}_{d_m})$. Let $\mathrm{J}(p^{(n)}_{d_1},\ldots, p^{(n)}_{d_m})$ denote the Jacobian matrix of $p^{(n)}_{d_1},\ldots, p^{(n)}_{d_m}$ with respect to the variables $X_1, \ldots, X_n$. Then, the singular locus $\mathrm{Sing}(V^*)$ of $V^*$ is contained in the subset $C^*$ of $V^*$ defined by
$$
C^*:=\{\boldsymbol{x}\in V^*: \mathrm{rank}\,  \mathrm{J}(p^{(n)}_{d_1},\ldots, p^{(n)}_{d_m})(\boldsymbol{x})< m\}
$$
and whose affine cone is the subset $C'$ of $V_{\A^{n}}(p^{(n)}_{d_1},\ldots, p^{(n)}_{d_m})$ defined by
$$
C'=\{\boldsymbol{x}\in V_{\A^{n}}(p^{(n)}_{d_1},\ldots, p^{(n)}_{d_m}): \mathrm{rank}\,   \mathrm{J}(p^{(n)}_{d_1},\ldots, p^{(n)}_{d_m})(\boldsymbol{x})< m\}.
$$
 By Theorem \ref{teo: V es interseccion completa} , we have that $V_{\A^{n}}(p^{(n)}_{d_1},\ldots, p^{(n)}_{d_m})$ is a complete intersection of dimension $n-m$. Thus $V^*$ is a complete intersection of dimension $n-m-1$. Then we have $\dim C'\leq   \frac{m-1}{2}+G$  by Theorem \ref{dimension singular V M=0}. Thus, $\dim C^*\le \frac{m-1}{2}+G-1$. It follows that $\mathrm{Sing}(V^*)$ has codimension at least $2$ in $V^*$. Thus $V^*$ is a normal variety. Then by Theorem  \ref{theorem: normal complete int implies irred}, $V^*$ is absolutely irreducible. On the other hand, by Theorem \ref{teo: V es interseccion completa}, $\mathcal{V}=V_{\A^{n}}(f_1, \ldots, f_m)$ has pure dimension $n-m$. Thus $\mathrm{pcl} (\mathcal{V})$ has positive dimension  $n-m$. It follows that $\mathcal{V}^{\infty}=\mathrm{pcl} (\mathcal{V}) \cap \{X_0=0\}$ has dimension at least $n-m-1$ (\cite[Chapter 9, Proposition 7]{CLO92}). Since $\mathcal{V}^{\infty}\subset V^*$ we deduce that $\dim \mathcal{V}^{\infty}=\dim V^*$. As $V^*$ is irreducible by Theorem \ref{theorem: normal complete int implies irred} we conclude that $\mathcal{V}^{\infty}=V^*$. Finally, by \cite[Section 2.5.6a]{Man2018} we have $\deg(\mathcal{V}^{\infty})=d_1\cdots d_m$.
\end{proof}

We call the set  $\Sigma^{\infty}:=\mathrm{Sing}(\mathrm{pcl}(\mathcal{V}))\cap \{X_0=0\}$ the singular locus of $\mathcal{V}$ at infinity.

\begin{lemma}\label{dim sing(pcl(V)) leq m+M divido 2}
    With the same hypotheses as in Theorem \ref{teo:props_clausura_infinito}, the singular locus $\Sigma^{\infty}$ of $\mathcal{V}$ at infinity has dimension at most $\frac{m-1}{2}+G-1.$
\end{lemma}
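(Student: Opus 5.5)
The plan is to show that every point of $\Sigma^{\infty}$ lies in the set $C^*$ of the proof of Theorem \ref{teo:props_clausura_infinito}, and then use the bound $\dim C^*\le \frac{m-1}{2}+G-1$ obtained there.

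First we fix generators of the ideal. By Theorem \ref{teo: V es interseccion completa}, $f_1,\ldots,f_m$ is a reduced regular sequence. By Theorem \ref{teo:props_clausura_infinito}, $\mathcal{V}^{\infty}=V(X_0,p^{(n)}_{d_1},\ldots,p^{(n)}_{d_m})$ has dimension $n-m-1$. Hence $\mathrm{pcl}(\mathcal{V})\subset V(f_1^h,\ldots,f_m^h)=:\mathcal{W}$. Moreover, $\mathcal{W}\cap\{X_0=0\}=\mathcal{V}^{\infty}$ has dimension $n-m-1$, so $\mathcal{W}$ has no component contained in the hyperplane at infinity. It follows that $\mathcal{W}=\mathrm{pcl}(\mathcal{V})$, and $\mathrm{pcl}(\mathcal{V})$ has pure dimension $n-m$. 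I expect the ideal of $\mathrm{pcl}(\mathcal{V})$ to be $(f_1^h,\ldots,f_m^h)$. The reason is that the leading forms $p^{(n)}_{d_j}$ form a regular sequence by Theorem \ref{teo: V es interseccion completa}, so the $f_j^h$ generate the homogenization of the radical ideal $(f_1,\ldots,f_m)$. In fact, for the singularity argument it is enough to use the Jacobian criterion with respect to these $m$ equations: at a point where their Jacobian has rank $m$, the variety $\mathcal{W}$ has dimension $n-m$, and so the point is regular.

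Next we compute the Jacobian at infinity. Take $\boldsymbol{x}=(0:x_1:\cdots:x_n)\in\Sigma^{\infty}$. The Jacobian of $f_1^h,\ldots,f_m^h$ with respect to $X_0,\ldots,X_n$ has first column $\partial f_j^h/\partial X_0=-d_jb_jX_0^{d_j-1}$. This column vanishes at $X_0=0$ because $d_j\ge 1$. The only exception is $d_1=1$, where $\partial f_1^h/\partial X_0=-b_1$ is a constant. In the generic case the remaining columns form exactly $\mathrm{J}(p^{(n)}_{d_1},\ldots,p^{(n)}_{d_m})(x_1,\ldots,x_n)$. If its rank is $m$, then the point is regular on $\mathrm{pcl}(\mathcal{V})$, as follows from the previous paragraph after dehomogenizing in an affine chart $X_k\neq 0$ and using the Euler relation to pass between homogeneous and affine Jacobians. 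Hence $\Sigma^{\infty}\subset C^*$, where $C^*$ is the set of points of $\mathcal{V}^{\infty}=V^*$ at which that rank is less than $m$.

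Finally, the proof of Theorem \ref{teo:props_clausura_infinito} gives $\dim C^*\le \frac{m-1}{2}+G-1$. This bound comes from $\dim C'\le\frac{m-1}{2}+G$ for the affine cone, via Theorem \ref{dimension singular V M=0} with all $b_j=0$. This yields the claim.

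The main obstacle is the justification that a full-rank Jacobian at a point at infinity implies regularity of $\mathrm{pcl}(\mathcal{V})$ there. This needs the pure dimensionality $n-m$ of $\mathcal{W}$ together with the identification $\mathcal{W}=\mathrm{pcl}(\mathcal{V})$, so that $\dim_{\boldsymbol{x}}\mathrm{pcl}(\mathcal{V})=n-m\ge \dim T_{\boldsymbol{x}}$. A smaller point is the case $d_1=1$. There the extra column $(-b_1,0,\ldots,0)^T$ only increases the rank, so the inclusion $\Sigma^{\infty}\subset C^*$ still holds.
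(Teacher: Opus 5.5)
Your proof is correct, but it takes a different route from the paper. The paper cites \cite[Lemma 1.1]{GhLa02a}, a general fact about hyperplane sections, to get $\Sigma^{\infty}\subset\mathrm{Sing}(\mathcal{V}^{\infty})$. It then quotes the bound on $\dim\mathrm{Sing}(\mathcal{V}^{\infty})$ from Theorem~\ref{teo:props_clausura_infinito}, a bound which is itself obtained through $\mathrm{Sing}(V^*)\subset C^*$.

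You skip the intermediate $\mathrm{Sing}(\mathcal{V}^{\infty})$ and prove $\Sigma^{\infty}\subset C^*$ directly:
\begin{itemize}
\item You identify $\mathrm{pcl}(\mathcal{V})=V(f_1^h,\ldots,f_m^h)$, pure of dimension $n-m$, using Theorems~\ref{teo: V es interseccion completa} and~\ref{teo:props_clausura_infinito}.
\item At $X_0=0$ the homogeneous Jacobian contains $\mathrm{J}(p^{(n)}_{d_1},\ldots,p^{(n)}_{d_m})$ as a submatrix. The $X_0$-column vanishes, or for $d_1=1$ can only raise the rank.
\item Euler's relation transfers full rank to an affine chart. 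There $T_{\boldsymbol{x}}$ lies in the kernel of the chart Jacobian, and pure dimensionality forces regularity.
\end{itemize}
You also correctly note that this does not require $I(\mathrm{pcl}(\mathcal{V}))=(f_1^h,\ldots,f_m^h)$.

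This is essentially the argument the paper itself uses later, in the proof of Theorem~\ref{teo: pcl V interseccion completa}, to bound $\widetilde{C}\cap\{X_0=0\}$ via $\mathcal{C}\subset\{0\}\times C'$.

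The trade-off:
\begin{itemize}
\item Your version is self-contained and lands directly in the rank-deficiency locus $C^*$. It depends on the specific defining equations and needs the identification of $\mathrm{pcl}(\mathcal{V})$ with $V(f_1^h,\ldots,f_m^h)$ together with its pure dimension.
\item The cited lemma is shorter and intrinsic, independent of any choice of generators.
\end{itemize}
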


\begin{proof}
     By \cite[Lemma 1.1]{GhLa02a}, we have  $\Sigma^{\infty}\subset \mathrm{Sing} (\mathcal{V}^{\infty})$. Then the lemma follows by Theorem \ref{teo:props_clausura_infinito}.
\end{proof}

\begin{theorem} \label{teo: pcl V interseccion completa}
Let hypotheses as in Theorem \ref{teo:props_clausura_infinito}. Then
$\mathrm{pcl}(\mathcal{V})=V_{\Pp^n}(f_1^h,\ldots,f_m^h)$. Moreover,  $\mathrm{pcl}(\mathcal{V})$ is an absolutely irreducible, normal complete intersection  of dimension $n-m$ and degree $d_1\cdots d_m$.  Further,  $\dim \mathrm{Sing} (\mathrm{pcl}(\mathcal{V}))\leq\frac{m-1}{2}+G$.
\end{theorem}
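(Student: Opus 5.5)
The plan is to follow the standard pattern used for the locus at infinity. We work with the projective variety $V^h:=V_{\Pp^n}(f_1^h,\ldots,f_m^h)$ and establish, in order, its dimension, its singular locus, normality, irreducibility, the equality with $\mathrm{pcl}(\mathcal{V})$, and finally its degree.

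\textbf{Dimension.} First we show that $V^h$ has pure dimension $n-m$. Every irreducible component of $V^h$ has dimension at least $n-m$. Split $V^h$ into its affine part $V^h\cap\{X_0\neq 0\}\cong\mathcal{V}$ and its part at infinity $V^h\cap\{X_0=0\}=V_{\Pp^{n-1}}(p^{(n)}_{d_1},\ldots,p^{(n)}_{d_m})$. By Theorem \ref{teo: V es interseccion completa}, $\mathcal{V}$ has pure dimension $n-m$. By Theorem \ref{teo:props_clausura_infinito}, the part at infinity equals $\mathcal{V}^\infty$ and has dimension $n-m-1$. Hence no component of $V^h$ can lie inside $\{X_0=0\}$, and $V^h$ has pure dimension $n-m$. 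The affine cone of $V^h$ is then a set-theoretic complete intersection defined by the homogeneous polynomials $f_j^h$, so Lemma \ref{lemma: reg seq and complete int} shows that $f_1^h,\ldots,f_m^h$ is a regular sequence.

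\textbf{Singular locus.} Let $C^h$ be the set of points of $V^h$ where the Jacobian of $f_1^h,\ldots,f_m^h$ has rank less than $m$. On the affine chart $\{X_0\neq 0\}$, dehomogenizing gives exactly the set $C$ of \eqref{Conjunto C}, so this part has dimension at most $\frac{m-1}{2}+G$ by Theorem \ref{dimension singular V M=0}. At a point with $X_0=0$, the Jacobian with respect to $X_0,\ldots,X_n$ has a column $\partial f_j^h/\partial X_0=-d_jb_jX_0^{d_j-1}$, which vanishes there because $d_j\geq 1$. The only exception is $d_1=1$, which still leaves the $X_1,\ldots,X_n$ block unchanged. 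So rank deficiency at infinity means $\boldsymbol{x}\in C^*$, and this part has dimension at most $\frac{m-1}{2}+G-1$ by the proof of Theorem \ref{teo:props_clausura_infinito}. Altogether $\dim C^h\le\frac{m-1}{2}+G$.

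\textbf{Radicality, normality and irreducibility.} The hypothesis $2m+G\le n$ gives $n-m-(\frac{m-1}{2}+G)\ge 2$. In the case $m=2$, the hypothesis $n\ge 5+G$ gives the same inequality. Hence $C^h$ has codimension at least $2$. Applying Theorem \ref{theorem: eisenbud 18.15} to the affine cone, the ideal $(f_1^h,\ldots,f_m^h)$ is radical, so $V^h$ is an ideal-theoretic complete intersection. Since $\mathrm{Sing}(V^h)\subset C^h$, the variety $V^h$ is normal, and Theorem \ref{theorem: normal complete int implies irred} gives absolute irreducibility.

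\textbf{Equality with the projective closure and degree.} We have $\mathrm{pcl}(\mathcal{V})\subset V^h$, and both have dimension $n-m$. Since $V^h$ is irreducible, $\mathrm{pcl}(\mathcal{V})=V^h$. The B\'ezout theorem for complete intersections then gives degree $d_1\cdots d_m$.

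The main obstacle is the singular-locus bound at infinity. There one must carefully justify that the rank of the full projective Jacobian at points with $x_0=0$ is governed by the $X_1,\ldots,X_n$ block, and use homogeneity and the Euler relation to pass between affine cones and projective dimensions. The rest is bookkeeping of earlier results.
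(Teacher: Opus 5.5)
Your proposal is correct and follows essentially the paper's proof. Like the paper, you bound the rank-deficient set of $\mathrm{J}(f_1^h,\ldots,f_m^h)$ separately on $\{X_0\neq 0\}$ (via $C$ and Theorem \ref{dimension singular V M=0}) and on $\{X_0=0\}$, then apply Theorem \ref{theorem: eisenbud 18.15} to the affine cone, Theorem \ref{theorem: normal complete int implies irred}, and the dimension comparison with $\mathrm{pcl}(\mathcal{V})$. At infinity only the fact that the $X_1,\ldots,X_n$ block is a submatrix matters, so the vanishing of the $X_0$-column is irrelevant; it would in any case need $d_j\ge 2$, not $d_j\ge 1$.

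The one minor difference is how you get regularity of $f_1^h,\ldots,f_m^h$. The paper shows that $f_1^h,\ldots,f_m^h,X_0$ is a regular sequence, using the set-theoretic complete intersection at infinity, whereas you obtain pure dimensionality directly from the affine part and the part at infinity; both arguments work.
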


\begin{proof}
Set $W:=V_{\Pp^n}(f_1^h,\ldots,f_m^h)$. We then have an isomorphism of projective varieties  $W\cap \{X_0=0\}\cong V_{\Pp^{n-1}}(p^{(n)}_{d_1},\ldots, p^{(n)}_{d_m})$.  By Theorem \ref{teo:props_clausura_infinito}, $V^{\infty}=V_{\Pp^{n-1}}(p^{(n)}_{d_1},\ldots, p^{(n)}_{d_m})$, and therefore $W\cap \{X_0=0\}$, has pure dimension $n-m-1$. Since $W\cap \{X_0=0\}$ is defined by the $m+1$ polynomials $f_1^h,\ldots,f_m^h, X_0$, it follows that $W\cap \{X_0=0\}$ is a set-theoretic complete intersection of dimension $n-m-1$. By Lemma \ref{lemma: reg seq and complete int} we conclude that $f_1^h,\ldots,f_m^h, X_0$ is a regular sequence in $\fq[X_0, \ldots, X_n]$. Thus $f_1^h,\ldots,f_m^h$ is a regular sequence in $\fq[X_0, \ldots, X_n]$ and by Lemma \ref{lemma: reg seq and complete int} we conclude that $W$ is a set-theoretic complete intersection of dimension $n-m$.

 Next, let $\mathrm{J}(f_1^h,\ldots,f_m^h)$ be the Jacobian matrix of $f_1^h,\ldots,f_m^h$ with respect to the variables $X_0, \ldots, X_n$.
 Then $\mathrm{Sing} (W)$ is contained in the subset $\widetilde{C}$  of $W$ defined by
$$
\widetilde{C}:=\{\boldsymbol{x}\in W: \mathrm{rank}\,   \mathrm{J}(f_1^h,\ldots,f_m^h)(\boldsymbol{x})< m\}.
$$

We claim that $\dim \widetilde{C} \leq \frac{m-1}{2}+G$. We will show that $\dim \widetilde{C}\cap \{X_0\neq 0\}\leq \frac{m-1}{2}+G$ and $\dim \widetilde{C}\cap \{X_0= 0\}\leq \frac{m-1}{2}+G-1$, from which our claim follows. For this purpose, consider the Jacobian matrix $\mathrm{J}(f_1,\ldots,f_m)$. By using the Euler's Homogeneous Function Theorem it is easy to see that for any $\boldsymbol{x}=(1:x_1:\,\dots\, :x_n)\in W\cap \{X_0\neq 0\}$ it holds $\mathrm{rank}\,   \mathrm{J}(f_1^h,\ldots,f_m^h)(\boldsymbol{x})=\mathrm{rank}\,   \mathrm{J}(f_1,\ldots,f_m)(x_1, \ldots, x_n)$. If $C:=\{(x_1, \ldots, x_n)\in V_{\A^n}(f_1,\ldots,f_m): \mathrm{rank}\,   \mathrm{J}(f_1,\ldots,f_m)(\boldsymbol{x})< m\}$, then we have an isomorphism of quasi projective varieties $\widetilde{C}\cap  \{X_0\neq 0\} \cong C$. Thus $\dim \widetilde{C}\cap \{X_0\neq 0\}\leq \frac{m-1}{2}+G$ by Theorem \ref{dimension singular V M=0}. Then, consider the subset $C'$ of $V_{\A^{n}}(p^{(n)}_{d_1},\ldots, p^{(n)}_{d_m})$ defined by
$$
C'=\{(x_1, \ldots, x_n)\in V_{\A^{n}}(p^{(n)}_{d_1},\ldots, p^{(n)}_{d_m}): \mathrm{rank}\,   \mathrm{J}(p^{(n)}_{d_1},\ldots, p^{(n)}_{d_m})(x_1, \ldots, x_n)< m\},
$$
where $\mathrm{J}(p^{(n)}_{d_1},\ldots, p^{(n)}_{d_m})$ denote the Jacobian matrix of $p^{(n)}_{d_1},\ldots, p^{(n)}_{d_m}$ with respect to to the variables $X_1, \ldots, X_n$.
Then for any point $\boldsymbol{x}=(x_0:x_1:\, \dots\, :x_n)\in W$ it holds $\mathrm{rank}\,   \mathrm{J}(p^{(n)}_{d_1},\ldots, p^{(n)}_{d_m})(x_1, \ldots, x_n)\leq \mathrm{rank}\,   \mathrm{J}(f_1^h,\ldots,f_m^h)(\boldsymbol{x})$. Thus, the affine cone $\mathcal{C}\subset \A^{n+1}$ of the projective variety $\widetilde{C}\cap \{X_0= 0\}$ satisfies $\mathcal{C}\subset \{0\}\times C'$. Since $\dim C' \leq \frac{m-1}{2}+G$ by Theorem \ref{dimension singular V M=0}, we conclude that $\dim \widetilde{C}\cap \{X_0= 0\}=\dim \mathcal{C}-1\leq \frac{m-1}{2}+G-1$. This proves our claim.

The previous claim implies that $\widetilde{C}$ has codimension $\geq 2$ in $W$.
Now,  consider the subset $C^a$ of the affine variety $V_{\A^{n+1}}(f_1^h,\ldots,f_m^h)$ defined by
$$
C^a:=\{\boldsymbol{x}\in V_{\A^{n+1}}(f_1^h,\ldots,f_m^h): \mathrm{rank}\,   \mathrm{J}(f_1^h,\ldots,f_m^h)(\boldsymbol{x})< m\}.
$$
Since  $V_{\A^{n+1}}(f_1^h,\ldots,f_m^h)$ and $C^a$ are the affine cones of $V_{\Pp^n}(f_1^h,\ldots,f_m^h)$ and $\widetilde{C}$ respectively, we deduce that $C^a$ has codimension $\geq 2$ in $V_{\A^{n+1}}(f_1^h,\ldots,f_m^h)$. By Theorem \ref{theorem: eisenbud 18.15} it follows that $(f_1^h,\ldots,f_m^h)$ is a radical ideal. This completes the proof that  the sequence  $f_1^h,\ldots,f_m^h$ defines a complete intersection in $\Pp^n$. Further since $\mathrm{Sing} (W)$ has codimension $\geq 2$ in $W$ it follows that  $W$ is a normal variety and, by Theorem \ref{theorem: normal complete int implies irred}, that $W$ is an absolutely irreducible variety of dimension $n-m$. Since $\mathrm{pcl}(\mathcal{V})\subseteq W= V_{\Pp^n}(f_1^h,\ldots,f_m^h)$ and $\dim \mathrm{pcl}(\mathcal{V})=\dim \mathcal{V}=n-m$ we conclude that $\mathrm{pcl}(\mathcal{V})= V_{\Pp^n}(f_1^h,\ldots,f_m^h)$. Finally, by \cite[Theorem 18.3]{Harris92} we have $\deg(\mathrm{pcl}(\mathcal{V}))=d_1\cdots d_m$.
\end{proof}
\begin{remark} \label{obeservacion 3.10}
Under the hypotheses of Theorems \ref{teo:props_clausura_infinito}  and \ref{teo: pcl V interseccion completa}, if moreover $
m\ge 2G+1,$
then the corresponding bounds for the dimensions of the singular loci can be improved to
$
\dim \mathrm{Sing}(\mathcal{V}^\infty)\le m-1$ and
$\dim \mathrm{Sing}(\operatorname{pcl}(\mathcal{V}))\le m-1.$
Similarly, under the hypotheses of Lemma \ref{dim sing(pcl(V)) leq m+M divido 2},
$
\dim \Sigma^{\infty}\le m-1.$
Indeed, these inequalities follow immediately from the improved estimate
$
\dim C\le m-1$
obtained after Theorem \ref{dimension singular V M=0}.
\end{remark}

Notice that, setting $a_i=1$ for $1\leq i\leq n$
in the polynomials of \eqref{eq:intro-system}, we obtain systems
defined by the polynomials
$p_{d_1}^{(n)}-b_1,\ldots,p_{d_m}^{(n)}-b_m
\in\mathbb{F}_q[X_1,\ldots,X_n]$, where
\[
p_{d_j}^{(n)}=X_1^{d_j}+\cdots+X_n^{d_j},
\qquad 1\leq j\leq m.
\]
Therefore, Theorems \ref{teo: V es interseccion completa}, \ref{teo:props_clausura_infinito} and \ref{teo: pcl V interseccion completa} yield,
as a particular case, the regularity, radicality,
and normal complete intersection properties
stated in Theorem \ref{thm:intro-geom}.

Furthermore, assume that $m\geq 2G+1$ and
$p>n-m-G+2$. Then, by Remarks \ref{obeservacion 3. 6} and \ref{obeservacion 3.10}, the
corresponding bounds for the dimensions of the
singular loci can be improved to $m-1$.
By incorporating these improved bounds into the proofs
of Theorems \ref{teo: V es interseccion completa}, \ref{teo:props_clausura_infinito} and \ref{teo: pcl V interseccion completa}, the conditions on the
number of variables $n$ can be relaxed accordingly.
More precisely,
one obtains the same numerical conditions on $n$ as
in the case $G=0$. Namely, $n\geq 2m-1$,
$n\geq 2m$, and $n\geq 2m+1$ yield, respectively,
the regular sequence, radicality, and normal projective
complete intersection conclusions.

These results may be compared with those of Conca
et al.~\cite{CSS2024}, who studied ideals generated by
 polynomials $p_{d_j}^{(n)}$, $1\leq j \leq n$, over $\mathbb{C}$. They proved
that, for distinct exponents $d_1,\ldots,d_m$, the
corresponding power sums generate a complete intersection ideal of codimension \(m\)
when $n\geq 2m-1$, define a radical ideal when
$n\geq 2m$, and a normal domain when $n\geq 2m+1$.

Our results establish analogous properties over finite
fields. In particular, under our hypotheses on the
characteristic $p$ and on the exponents
$d_1,\ldots,d_m$, we obtain the same thresholds for
the corresponding regular sequence, radicality,
and normality properties. The main difference is that,
over finite fields, additional assumptions are required
on $p$ and on the parameter $G=d_m-d_1-m+1$.
Thus, our results give finite-field analogues of the
results of Conca et al. for the families of power sums
satisfying these explicit hypotheses.

\section{Estimates on the number of $\fq$--rational points  of power sums varieties}

The geometric properties established in the previous sections provide the structural framework needed
for the arithmetic analysis that follows. In particular, regularity, complete intersection
and normality allow us to apply point-counting estimates over finite fields to the varieties
defined by symmetric power sum systems. In this section, we exploit these properties to
derive explicit bounds for the number of $\mathbb{F}_q$--rational points and effective
existence results.

In what follows, we shall use an estimate on the number of
$\fq$--rational points of a projective complete intersection due to S. Ghorpade
and G. Lachaud (\cite{GhLa02a}; see also \cite{GhLa02}). Let $V\subset \Pp^N$ an $\fq$--irreducible complete intersection of dimension $r$. Let $I(V)=(f_1\klk f_{N-r})$ and $d_i=deg(f_i)$ for $1\le i\le n-r$. Note that the degrees $d_1,\ldots, d_{N-r}$ depend only on
$V$ and not on the system of generators. Arranging the $d_i$ in such
a way that $d_1\geq d_2 \geq \cdots \geq d_{N-r}$, we call $(d_1,\ldots,
d_{N-r})$ the {\em multidegree} of
$V$.

In \cite[Theorem 6.1]{GhLa02a}, the authors prove that,
for an irreducible $\fq$--complete intersection $V\subset \Pp^N$
of dimension $r$, multidegree ${\bfs{d}}=(d_1,\ldots,d_{N-r})$ and singular locus of dimension at most $s$ with $0\leq s\leq r-1$, the number $|V(\fq)|$ of $\fq$--rational points of $V$ satisfies the estimate.

\begin{equation}\label{eq: estimacion de Ghorpade Lachaud}
\big||V(\fq)|-p_r\big|\leq b_{r-s-1}'(N-s-1,{\bfs{d}})\, \, q^{\frac{r+s+1}{2}}+C_s(V)q^{\frac{r+s}{2}},
\end{equation}
where $p_r:=q^r+q^{r-1}+\cdots+1$,  $b_{r-s-1}'(N-s-1,{\bfs{d}})$ is the  $(r-s-1)$--th primitive Betti of a nonsingular complete intersection in $\Pp^N$
of dimension $r-s-1$ and  multidegree ${\bfs{d}}$,
and $C_s(V):=\sum_{i=r}^{r+s}b_{i,\ell}(V)+\varepsilon_i$, where
$b_{i,\ell}(V)$ denotes the $i$--th $\ell$--adic Betti number of $V$ for a prime $\ell$ different from $p:=\mathrm{char}(\fq)$ and
$\varepsilon_i:=1$ for even $i$ and $\varepsilon_i:=0$ for odd $i$.
From \cite[Proposition 4.2]{GhLa02a}
\begin{equation}\label{cota de Betti}
b_{r-s-1}'(N-s-1,{\bfs{d}})\leq \binom{N-s}{r-s-1}\cdot (d+1)^{N-s-1},
\end{equation}
where $d:=\max\{d_1,\ldots,d_{N-r}\}$.
On the other hand, from \cite[Theorem 6.1]{GhLa02a}, we have
\begin{equation}\label{constante C}
C_s(V)\leq 9\cdot 2^{N-r}\cdot ((N-r)d+3)^{N+1}.
\end{equation}

Now we consider $\mathcal{V}$, the variety defined by $f_j=X_1^{d_j}+\cdots+X_n^{d_j}-b_j$ for $1\le j\le m$ with $b_j\in \F_q$ and recall that $G=d_m-d_1-m+1$. Assume that $G>0$ when $m=2$. 

Denote by  $\mathrm{pcl}(\mathcal{V})(\fq)$ the set of $\fq$--rational points of $\mathrm{pcl}(\mathcal{V})$.
We also define:
\begin{equation}
s:=\left\lfloor\frac{m-1}{2}\right\rfloor+G.
\end{equation}

Let $s^\infty=s-1$. It's easy to check that with the hypothesis of Theorem \ref{teo:props_clausura_infinito} $s\le \dim \mathrm{pcl}(\mathcal{V})-1$ and $s^\infty\le \dim \mathcal{V}^\infty-1$ so that the hypothesis of S. Ghorpade and G. Lachaud are satisfied. From Theorem \ref{teo: pcl V interseccion completa}, the estimate \eqref{eq: estimacion de Ghorpade Lachaud}, the bounds \eqref{cota de Betti} and \eqref{constante C}, we deduce that
\begin{equation}\label{estimation pcl V}
    \big||\mathrm{pcl}(\mathcal{V})(\mathbb{F}_q)|-p_{n-m}\big|\leq{n-s\choose n-s-m-1}{(d_m+1)}^{n-s-1}q^{\frac{n-m+s+1}{2}}+9\cdot 2^{m}(md_m+3)^{n+1}q^{\frac{n-m+s}{2}}.
\end{equation}

Now we estimate the number of $\fq$--rational points of $\mathcal{V}^{\infty}\subset \Pp^{n-1}$. From Theorem \ref{teo:props_clausura_infinito}, the bounds \eqref{cota de Betti} and \eqref{constante C},  we obtain
\begin{equation} \label{estimacion pcl(V) infinito}
    \big||\mathrm{\mathcal{V} ^{\infty}}(\fq)|-p_{n-m-1}\big|\leq{n-s\choose n-s-m-1}{(d_m+1)}^{n-s-1}q^{\frac{n-m+s-1}{2}}+9\cdot 2^{m}(md_m+3)^{n}q^{\frac{n-m+s-2}{2}}.
\end{equation}
With these results proven, we can now state our main theorem.
\begin{theorem}\label{teo: estimacion ptos racionales V}.
With the same hypotheses as Theorem \ref{teo:props_clausura_infinito}. Let $|\mathcal{V}(\mathbb{F}_q)|$ be the number of $\mathbb{F}_q$--rational points of the variety $\mathcal{V}$. Then
\begin{equation*}
\left|\,|\mathcal{V}(\mathbb{F}_q)| - q^{n - m}\,\right| \leq 27 \cdot 2^{m - 1} (3 + d_m m)^{n + 1} \cdot q^{\frac{n-m+s+1}{2}}.
\end{equation*}
\end{theorem}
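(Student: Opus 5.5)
The plan is to use the decomposition $|\mathcal{V}(\fq)| = |\mathrm{pcl}(\mathcal{V})(\fq)| - |\mathcal{V}^{\infty}(\fq)|$. This identity holds because, by Theorem \ref{teo: pcl V interseccion completa}, $\mathrm{pcl}(\mathcal{V})=V_{\Pp^n}(f_1^h,\ldots,f_m^h)$. Hence its $\fq$--points with $X_0\neq 0$ are in bijection with $\mathcal{V}(\fq)$. Its points with $X_0=0$ are exactly $\mathcal{V}^{\infty}(\fq)$, where $\mathcal{V}^{\infty}=V_{\Pp^{n-1}}(p^{(n)}_{d_1},\ldots,p^{(n)}_{d_m})$ by Theorem \ref{teo:props_clausura_infinito}. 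Since $p_{n-m}-p_{n-m-1}=q^{n-m}$, the triangle inequality gives
\[
\big||\mathcal{V}(\fq)|-q^{n-m}\big|\le \big||\mathrm{pcl}(\mathcal{V})(\fq)|-p_{n-m}\big|+\big||\mathcal{V}^{\infty}(\fq)|-p_{n-m-1}\big|.
\]
Both terms on the right are already controlled by the estimates \eqref{estimation pcl V} and \eqref{estimacion pcl(V) infinito}. These rest on Ghorpade--Lachaud applied to the normal, absolutely irreducible complete intersections of Theorems \ref{teo: pcl V interseccion completa} and \ref{teo:props_clausura_infinito}, with singular loci of dimension at most $s$ and $s-1$ respectively.

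It then remains to bound the resulting four summands by the single expression $27\cdot 2^{m-1}(3+md_m)^{n+1}q^{(n-m+s+1)/2}$. Each power of $q$ is at most $q^{(n-m+s+1)/2}$, so we can bound all of them by this power. For the Betti-number coefficient ${n-s\choose n-s-m-1}(d_m+1)^{n-s-1}$, use ${n-s\choose n-s-m-1}\le 2^{n-s}$. Then $2^{n-s}(d_m+1)^{n-s-1}\le (2d_m+2)^{n-s}\le (md_m+3)^{n+1}$, using $m\ge 2$ and $2d_m+2\le md_m+3$. This coefficient occurs twice, so the two Betti terms together give at most $2(md_m+3)^{n+1}$. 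The two constant terms contribute at most $9\cdot 2^m(md_m+3)^{n+1}$ and $9\cdot 2^m(md_m+3)^{n}$.

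Summing gives at most $(2+9\cdot 2^m+9\cdot 2^m)(md_m+3)^{n+1}q^{(n-m+s+1)/2}$. It remains to check that $2+18\cdot 2^m\le 27\cdot 2^{m-1}$. This reads $2\le 27\cdot 2^{m-1}-36\cdot 2^{m-1}$, which is false, so the loose bounds above are not enough.

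I therefore expect the main obstacle to be this constant bookkeeping, and I would sharpen the estimates as follows. First, the constant term from the infinity estimate carries $q^{(n-m+s-2)/2}$. This is smaller by a factor $q^{3/2}\ge 2$ than $q^{(n-m+s+1)/2}$, and it also carries $(md_m+3)^n$ rather than the $(n+1)$-st power. Second, the constant term for $\mathrm{pcl}(\mathcal{V})$ carries $q^{(n-m+s)/2}$, which saves a factor $q^{1/2}$. The two Betti terms can each be bounded by $2^{n-s}(d_m+1)^{n-s-1}\le \tfrac12 (md_m+3)^{n+1}$, since $md_m+3\ge 2d_m+3$ gives extra slack. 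With these refinements the total should fall below $\tfrac{27}{2}2^m(md_m+3)^{n+1}=27\cdot 2^{m-1}(md_m+3)^{n+1}$. Concretely, the intended accounting is $9\cdot 2^m+\tfrac{9}{2}2^m+(\text{Betti})\le \tfrac{27}{2}2^m$, which is where the stated constant $27\cdot 2^{m-1}$ comes from. The Betti contribution would be absorbed by the gain $(md_m+3)^{-1}$ in the infinity constant term, or by a factor of $q^{-1/2}$.
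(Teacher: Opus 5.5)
Your strategy is the paper's: write $|\mathcal{V}(\fq)|=|\mathrm{pcl}(\mathcal{V})(\fq)|-|\mathcal{V}^{\infty}(\fq)|$, apply the triangle inequality with $p_{n-m}-p_{n-m-1}=q^{n-m}$, and use \eqref{estimation pcl V} and \eqref{estimacion pcl(V) infinito}. Since those estimates are quoted, the proof consists of the final constant bookkeeping, and that step is unfinished. Your displayed tally, $9\cdot 2^m+\frac92 2^m+(\text{Betti})\le\frac{27}{2}2^m$, is false as written: $9+\frac92=\frac{27}{2}$ exactly, and the Betti terms are positive. So the claim that the total ``should fall below'' the bound is not yet justified. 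You have to actually use one of the savings you list.

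It closes easily, along the lines of your last sentence. Put $K:=(md_m+3)^{n+1}$ and $Q:=q^{\frac{n-m+s+1}{2}}$. Since $s\ge 0$ and $md_m+3\ge 2d_m+3\ge 5$, you have $K\ge 25(2d_m+2)^{n-s-1}$. The four terms are then bounded as follows:
\begin{itemize}
\item the two Betti terms together are at most $2(2d_m+2)^{n-s-1}(1+q^{-1})Q\le\frac{3}{25}KQ$;
\item the constant term at infinity is $9\cdot 2^m K(md_m+3)^{-1}q^{-3/2}Q\le\frac{9}{10\sqrt2}2^mKQ<2^mKQ$;
\item the constant term for $\mathrm{pcl}(\mathcal{V})$ is at most $9\cdot 2^mKQ$.
\end{itemize}
The sum is below $(10\cdot 2^m+1)KQ\le\frac{27}{2}2^mKQ$.

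The paper organizes the computation differently. It pairs the two Betti terms and the two constant terms into factors $(1+q)$. It then absorbs the Betti pair into the constant pair using $2(2d_m+2)^{n-s-1}\le 2K$ and one factor $q^{1/2}$, and concludes with $\bigl(1+\frac{q^{1/2}}{9\cdot 2^{m-1}}\bigr)(1+q)\le\frac32 q^{3/2}$. The constant $27\cdot 2^{m-1}=\frac32\cdot 9\cdot 2^m$ comes from that factor $\frac32$, not from a split $9+\frac92$.
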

\begin{proof}
From \eqref{estimation pcl V} and \eqref{estimacion pcl(V) infinito} and the fact that $|\mathrm{pcl}(\mathcal{V})(\fq) \setminus \mathcal{V}^{\infty}(\fq)|=|\mathcal{V}(\fq)|$, we obtain that
\begin{align*}
    \big||\mathcal{V}(\fq)|-q^{n-m}\big|&\leq
    {n-s\choose n-s-m-1}{(d_m+1)}^{n-s-1}q^{\frac{n-m+s+1}{2}}+9\cdot 2^{m}(md_m+3)^{n+1}q^{\frac{n-m+s}{2}}\\
    &+ {n-s\choose n-s-m-1}{(d_m+1)}^{n-s-1}q^{\frac{n-m+s-1}{2}}+9\cdot 2^{m}(md_m+3)^{n}q^{\frac{n-m+s-2}{2}}\\\\
\end{align*}
Taking into account that $\binom{a}{b} \leq 2^a$, we obtain
\begin{align*}
    \big||\mathcal{V}(\fq)|-q^{n-m}\big|&\leq2^{n-s}{(d_m+1)}^{n-s-1}q^{\frac{n-m+s+1}{2}}+9\cdot 2^{m}(md_m+3)^{n+1}q^{\frac{n-m+s}{2}}\\
    &+ 2^{n-s}{(d_m+1)}^{n-s-1}q^{\frac{n-m+s-1}{2}}+9\cdot 2^{m}(md_m+3)^{n}q^{\frac{n-m+s-2}{2}}\\
    & \leq 2^{n-s}(d_m+1)^{n-s-1}q^{\frac{n-m+s-1}{2}}(1+q)\\
    &+9\cdot2^{m}(md_m+3)^{n+1}q^{\frac{n-m+s-2}{2}}(1+q)\\
    & = 2 (2d_m+2)^{n-s-1}q^{\frac{n-m+s-1}{2}}(1+q)\\
    &+9\cdot2^{m}(md_m+3)^{n+1}q^{\frac{n-m+s-2}{2}}(1+q)\\
    &\leq 9\cdot2^{m}(md_m+3)^{n+1}q^{\frac{n-m+s-2}{2}}\Big(1+\frac{q^{\frac{1}{2}}}{9 \cdot 2^{m-1}}\Big)(1+q)\\
    &\leq 9\cdot2^{m}(md_m+3)^{n+1}q^{\frac{n-m+s-2}{2}}\frac{3}{2}q^{\frac{3}{2}}\\
    &\leq 27\cdot2^{m-1}(md_m+3)^{n+1}q^{\frac{n-m+s+1}{2}},
\end{align*}

which concludes our main result.

\end{proof}

\begin{remark}
    In the particular case that $m=2$ and $G=0$ the variety $\mathcal{V}^{\infty}$ is nonsingular (see Theorem \ref{teo:props_clausura_infinito}) and we got the following estimation due to Deligne \cite{Del74},
  \begin{align*}
    \big||\mathcal{V}^\infty(\fq)|-p_{n-m-1}\big|&\le b'_{n-m-1}(n-1,\boldsymbol{d}) q^{\frac{n-m-1}{2}}\\
    &\le {n\choose n-m-1}(d_m+1)^{n-1} q^{\frac{n-m-1}{2}}.
    \end{align*}
\end{remark}

\begin{remark}
We shall compare the estimate of Theorem \ref{teo: estimacion ptos racionales V} with the one obtained in \cite{GottigPerezPrivitelli2024} for the same family of power-sum systems. Under the hypotheses considered there, the authors prove
$
\bigl||\mathcal{V}(\mathbb F_q)|-q^{n-m}\bigr|
\leq
27\cdot 2^{m-1}(3+md_m)^{n+1}q^{n/2}.
$
In the present paper, Theorem~4.1 gives
$
\bigl||\mathcal{V}(\mathbb F_q)|-q^{n-m}\bigr|
\leq
27\cdot 2^{m-1}(3+md_m)^{n+1}
q^{(n-m+s+1)/2},
$
where $s=(m-1)/2+G$ and $G=d_m-d_1-m+1$. Hence,
$
\frac{n-m+s+1}{2}
=
\frac{n}{2}-\frac{m-1-2G}{4}.
$
Therefore, whenever $2G<m-1$, the estimate obtained here improves the exponent of $q$ in \cite{GottigPerezPrivitelli2024} by the factor
$
q^{-(m-1-2G)/4}.
$
In particular, when $G=0$, that is, when the exponents are consecutive, the error term improves from order $q^{n/2}$ to order
$
q^{\,n/2-(m-1)/4}.
$
If $2G=m-1$, both estimates have the same exponent of $q$, whereas if $2G>m-1$, the estimate of \cite{GottigPerezPrivitelli2024} has the better exponent. The improvement obtained here is a consequence of the sharper bound
$
\dim\operatorname{Sing}(\operatorname{pcl}(\mathcal{V}))\leq (m-1)/2+G,
$
compared with the bound $m-1$ used in \cite{GottigPerezPrivitelli2024}. We emphasize, however, that the present estimate is obtained under additional restrictions on the characteristic and on the parameters. Thus, the two results are complementary: the earlier estimate applies under weaker hypotheses, while the present one yields a genuine power saving in $q$ when the gaps between the exponents are sufficiently small.
\end{remark}

\begin{remark} \label{rem:comparison-podesta-videla}
It is useful to compare our estimate with the recent Weil-sum bound obtained by
Podestá and Videla in \cite[Corollary 4.2]{PV2026}.
For the system
$$
X_1^{d_i}+\cdots+X_n^{d_i}=b_i,
\qquad 1\leq i\leq m,
$$
their result gives
$\Big ||\mathcal{V}(\fq)|-q^{n-m}\Big|
\leq
(q^m-1)q^{\frac n2-m}(d_m-1)^n,$
provided that, for every
$\boldsymbol{\alpha}=(\alpha_1,\ldots,\alpha_m)\neq 0$, the polynomial
$f_{\boldsymbol{\alpha}}(T)
=
\alpha_1T^{d_1}+\cdots+\alpha_mT^{d_m}$
is non-degenerate, in the sense that it cannot be written as
$g^p-g$ for some $g\in\mathbb F_q[T]$.

According to the hypotheses of Theorem \ref{teo:props_clausura_infinito}, our estimate has the form
$$
\big||\mathcal{V}(\fq)|-q^{n-m}\big|
\leq
27\cdot 2^{m-1}(3+md_m)^{n+1}
q^{\frac{n-m+s+1}{2}}
=
27\cdot 2^{m-1}(3+md_m)^{n+1}
q^{\frac{n}{2}-\frac{m-1-2G}{4}}.
$$
A practical limitation of the Weil-sum estimate is that its applicability
requires the non-degeneracy of every polynomial
$f_{\boldsymbol{\alpha}}$, $\boldsymbol{\alpha}\neq \boldsymbol{0}$,
a condition which may be nontrivial to verify in general. In our setting,
the explicit assumption $p\nmid d_i$ for all $i$ guarantees these
non-degeneracy conditions simultaneously. In the other hand this condition is necessary for the radicality of the ideals generated by our power sum polynomials. Thus, while their hypothesis is
formally more general with respect to the characteristic, ours provides a
simple and readily checkable structural criterion.

On the other hand, whenever $2G<m-1,$ our geometric approach yields an error term of order $q^{{\frac{n}{2}-\frac{m-1-2G}{4}}}$ for fixed $m,n$ and degrees.
The Weil-sum estimate provides an error term of  order  $q^{\frac{n}{2}}$  our estimate yields a genuine power
saving in $q$.

The Weil-sum bound has a substantially better dependence on the number of variables
and on the degrees, through the factor $(d_m-1)^n$, whereas our constant contains
$(3+md_m)^{n+1}$.
This comparison highlights the different nature of the two approaches.
The character-sum argument estimates separately the one-variable polynomials
$f_{\boldsymbol{\alpha}}$, whereas our method exploits the global geometry of
the complete intersection, and in particular the dimension of its singular
locus. It is precisely this additional geometric information that produces
the improvement in the exponent of $q$.
\end{remark}

Combining the geometric properties established in Section 4 with the point-counting estimate of Theorem \ref{teo: estimacion ptos racionales V}, we arrive at the following existence result.

\begin{theorem}\label{prop: existencia}
With the same hypotheses as Theorem \ref{teo:props_clausura_infinito}.
If
\(
q>\Bigl(27\cdot 2^{m-1}\cdot (3+ md_m)^{n+1}\Bigr)^{\frac{2}{\,n-m-s-1\,}},
\)
then $|\mathcal{V}(\F_q)|>0$. In particular, for fixed \(m\) and fixed degrees \(d_1,\ldots,d_m\), if \(n\) is sufficiently large, then the existence of an \(\mathbb F_q\)--rational point is guaranteed whenever
\(
q>(3+md_m)^2
\)

Indeed, the threshold
\(
\left(
27\cdot 2^{m-1}(3+md_m)^{n+1}
\right)^{\frac{2}{n-m-s-1}}
\)
converges to
\(
(3+md_m)^2
\)
as \(n\to\infty\).

\end{theorem}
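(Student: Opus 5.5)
The plan is to read the existence statement off Theorem~\ref{teo: estimacion ptos racionales V}. Under the hypotheses of Theorem~\ref{teo:props_clausura_infinito}, that theorem gives
\[
|\mathcal{V}(\fq)|\ \ge\ q^{n-m}-27\cdot 2^{m-1}(3+md_m)^{n+1}q^{\frac{n-m+s+1}{2}}.
\]
So it suffices to show that
\[
q^{n-m}>27\cdot 2^{m-1}(3+md_m)^{n+1}q^{\frac{n-m+s+1}{2}}.
\]
Dividing by the power of $q$ on the right, this is
\[
q^{\frac{n-m-s-1}{2}}>27\cdot 2^{m-1}(3+md_m)^{n+1}.
\]

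Before raising to a power we must check that the exponent $n-m-s-1$ is positive. With $s=\lfloor (m-1)/2\rfloor+G$ and $n\ge 2m+G$, we get
\[
n-m-s-1\ \ge\ m-1-\lfloor (m-1)/2\rfloor\ \ge\ 1 .
\]
For $m=2$ the hypothesis $n\ge 5+G$ gives $n-m-s-1\ge 2$. These are the same inequalities already noted to make the Ghorpade--Lachaud estimate applicable. Raising both sides to the positive power $2/(n-m-s-1)$ then gives exactly the stated threshold on $q$, and $|\mathcal{V}(\fq)|>0$ follows.

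For the asymptotic claim, fix $m$ and $d_1,\ldots,d_m$, so that $s$ and $G$ are fixed as well. The threshold factors as
\[
\bigl(27\cdot 2^{m-1}\bigr)^{\frac{2}{n-m-s-1}}\,(3+md_m)^{\frac{2(n+1)}{n-m-s-1}}.
\]
As $n\to\infty$, the first factor tends to $1$ and the exponent $2(n+1)/(n-m-s-1)$ tends to $2$. Hence the threshold converges to $(3+md_m)^2$ from above.

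The main obstacle is making ``$n$ sufficiently large'' precise. Because the threshold decreases to its limit and never equals it, the literal bound $q>(3+md_m)^2$ is not enough for any fixed $n$. The honest conclusion is: for every $\varepsilon>0$ there is $n_0$ such that $q>(1+\varepsilon)(3+md_m)^2$ guarantees a rational point for all $n\ge n_0$. Equivalently, for a given $q>(3+md_m)^2$ one can take $n$ large depending on $q$. I would state the result in this form, choosing $n_0$ explicitly by bounding both factors of the threshold.

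The remaining hypotheses on $p$ ($p>n-m-G+2$, $p\nmid d_i$) are inherited unchanged from Theorem~\ref{teo:props_clausura_infinito}. I would also flag that the requirement $p>n-m-G+2$ grows with $n$, so the statement applies only in characteristic large relative to $n$.
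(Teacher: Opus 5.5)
Your proposal is correct and follows the paper's route. The existence criterion comes from rearranging the lower bound $|\mathcal{V}(\fq)|\ge q^{n-m}-27\cdot 2^{m-1}(3+md_m)^{n+1}q^{\frac{n-m+s+1}{2}}$ of Theorem~\ref{teo: estimacion ptos racionales V} (the paper repeats this computation in Section~5), and the asymptotic clause is just the limit of the threshold. Your positivity check on $n-m-s-1$ and your caveats are sound refinements that the paper leaves implicit: the threshold strictly exceeds $(3+md_m)^2$ for every $n$, and $p>n-m-G+2$ caps $n$ for a given $q$.
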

\begin{remark}
   Proposition \cite[Proposition 3.9]{GottigPerezPrivitelli2024}
provides the sufficient condition
$
q>
\left(
27\cdot 2^{m-1}(3+md_m)^{n+1}
\right)^{\frac{2}{n-2m}}
$
for the existence of an $\mathbb F_q$--rational point. In the present paper, Theorem \ref{prop: existencia} gives the condition
$
q>
\left(
27\cdot 2^{m-1}(3+md_m)^{n+1}
\right)^{\frac{2}{n-m-s-1}},
$
where $s=(m-1)/2+G$ and $G=d_m-d_1-m+1$. Since
$
n-m-s-1
=
n-2m+\frac{m-1-2G}{2},
$
whenever $2G<m-1$ we have
$
n-m-s-1>n-2m,
$
and hence the existence condition obtained here is strictly weaker than the one in \cite{GottigPerezPrivitelli2024}. Thus, in this range, the present result guarantees the existence of $\mathbb F_q$-rational solutions for smaller values of $q$. In the case $2G=m-1$, both criteria have the same exponent, while for $2G>m-1$ the earlier result gives the better exponent. We emphasize that the present criterion requires the additional hypotheses of Theorem \ref{teo:props_clausura_infinito}, so the two results are complementary. Moreover, for fixed $m$ and fixed degrees, both criteria have the same asymptotic threshold as $n\to\infty$.
\end{remark}

\section{A generalized Waring problem over finite fields}

Waring's problem over finite fields has been extensively studied for a single diagonal equation, where the main question is to determine the minimum number of variables required to represent every element of the field as a sum of $d$--th powers. Generalizations to systems of diagonal equations have also been considered, although the available results mainly concern a small number of equations or particular families of exponents. For instance, Castro et al. \cite{CRGF08} study a two-equation version of the problem and determine, in several special cases, the minimum number of variables ensuring solvability for every pair of prescribed right-hand sides. In contrast, we consider systems of an arbitrary number $m$ of power-sum equations with distinct, not necessarily consecutive, exponents $d_1<\cdots<d_m$.   More
precisely, given positive integers which are not divisible by $p$,
$1\leq d_1<\cdots<d_m,$ $m\geq 3$,
we consider the system
\begin{equation} \label{eq:waring}
\begin{cases}
X_1^{d_1}+\cdots+X_n^{d_1}=b_1,\\
\hspace{1.4cm}\vdots\\
X_1^{d_m}+\cdots+X_n^{d_m}=b_m,
\end{cases}
\end{equation}
where $(b_1,\ldots,b_m)\in\mathbb{F}_q^m$. In this setting, we define $\gamma(d_1,\ldots,d_m,q)$ as the least integer $n$
such that system \eqref{eq:waring} admits an $\mathbb F_q$-rational solution for every
$(b_1,\ldots,b_m)\in\mathbb F_q^m$, whenever such an integer exists. We now consider values of $n$ satisfying the hypotheses of Theorem \ref{teo: estimacion ptos racionales V} in order
to obtain an explicit upper bound for $\gamma(d_1,\ldots,d_m,q)$.
Since the estimate of Theorem \ref{teo: estimacion ptos racionales V} is uniform in the choice of
$(b_1,\ldots,b_m)$, it yields an explicit criterion ensuring that system \eqref{eq:waring}
is solvable for every $(b_1,\ldots,b_m)\in\mathbb F_q^m$.

Let $N$  be the number of $\fq$--rational solutions of  \eqref{eq:waring}.
From Theorem \ref{teo: estimacion ptos racionales V} we have that
\ \[
    \left|N-q^{n-m}\right|
    \leq
    27\cdot 2^{m-1}(3+d_m m)^{n+1}
    q^{\frac{n-m+s+1}{2}},
    \]
    where $s=\frac{m-1}{2}+G.$

Hence, we have that
$$
N\geq q^{n-m}
-
27\cdot 2^{m-1}(3+md_m)^{n+1}
q^{\frac{n-m+s+1}{2}}.
$$
Therefore, $N>0$ provided that
\[
q^{\frac{n-m-s-1}{2}}
>
27\cdot 2^{m-1}(3+md_m)^{n+1}.
\]

Since
$27\cdot 2^{m-1}(3+md_m)^{n+1}
\leq
\left(\frac{7}{2}md_m\right)^{n+1},$
it is enough to require
$$q^{\frac{n-m-s-1}{2}}
>
\left(\frac{7}{2}md_m\right)^{n+1}.$$
Assume now that
$q>\left(\frac{7}{2}md_m\right)^2.$
Taking logarithms, the previous inequality is equivalent to
\[
\frac{n-m-s-1}{2}\log q
>
(n+1)\log\left(\frac{7}{2}md_m\right),
\]
and hence
\[
n
\left(
\frac{1}{2}\log q
-
\log\left(\frac{7}{2}md_m\right)
\right)
>
\log\left(\frac{7}{2}md_m\right)
+
\frac{m+s+1}{2}\log q.
\]
Thus,
\[
n>
\frac{
\log\left(
\frac{7}{2}md_m\,
q^{\frac{m+s+1}{2}}
\right)}
{
\log\left(
\frac{\sqrt q}{\frac{7}{2}md_m}
\right)
}.
\]
Consequently, for every integer $2m+G \leq n< p+G+ m-2$  satisfying the hypotheses of
Theorem \ref{teo: estimacion ptos racionales V} and the above inequality, system \eqref{eq:waring} admits an
$\mathbb{F}_q$--rational solution for every
$(b_1,\ldots,b_m)\in\mathbb{F}_q^m$. In particular,
\[
\gamma(d_1,\ldots,d_m,q)
\leq L:=\max \Bigg\{ 2m+G ,
\left\lfloor
\frac{
\log\left(
\frac{7}{2}md_m\,
q^{\frac{m+s+1}{2}}
\right)}
{
\log\left(
\frac{\sqrt q}{\frac{7}{2}md_m}
\right)
}
\right\rfloor+1\Bigg\}
\]
provided that
$L<p+m+G-2.$
In other words, $\gamma(d_1,\ldots,d_m,q)$ quantifies how many variables
are sufficient to ensure that every prescribed tuple in $\mathbb{F}_q^m$
can be simultaneously represented by power sums of degrees
$d_1,\ldots,d_m$.

\section{Computation of a rational point of a diagonal system}
 We consider the problem of computing  a rational solution of system \eqref{eq:waring}. For this purpose we can apply the algorithm described in \cite{CaMa06b} for computing an $\fq$--rational point of a \emph{general} absolutely irreducible affine complete intersection  $\fq$--variety $V\subset \A^n$ of positive dimension. This algorithm is based on the so--called geometric resolutions or Kronecker representations of varieties and requires that the input variety $V$ to be given by a reduced regular sequence $f_1 \klk f_m$  in $\fq[X_1 \klk X_n]$ that defines $V$. The complexity of this algorithm is dominated by the number $\mathcal{T}$ of arithmetic operations in $\fq$ required to evaluate the polynomials $f_1 \klk f_m$, and the maximum $\delta$  of the degrees of the intermediate varieties $V(f_1 \klk f_s)$, $1\le s \le m$. In these terms and omitting polylogarithmic factors, we have that the algorithm  computes an $\fq$--point of $V$  in time roughly $\mathcal{O}^{\sim}(\mathcal{T}\delta^2\log^2 q)$ with high probability of success.  Here $\mathcal{T}$ denotes the number of arithmetic operations in $\fq$ required to evaluate the polynomials $f_1 \klk f_m$, and $\delta$ is the maximum of the degrees of the varieties $V(f_1 \klk f_s)$, $1\le s \le m$ (see \cite[Theorem and Corollary 6.5]{CaMa06b} for details).
Thus, assume that we are given the polynomials $f_j=p_{d_j}^{(n)}-b_j, 1\leq j \leq m$,
defining system \eqref{eq:waring}. Further, assume that the hypotheses of Theorems \ref{teo: V es interseccion completa}, \ref{teo:props_clausura_infinito} and  \ref{teo: pcl V interseccion completa}   hold. Then, $f_1 \klk f_m$ is a reduced regular sequence in $\fq[X_1 \klk X_n]$ that defines an the  absolutely irreducible complete intersection affine variety $\mathcal{V}=V(f_1 \klk f_m)$ of degree $d_1\cdots d_m$. Assume that $q> 8n^2d_m(d_1\cdots d_m)^4$ as required in \cite[Corollary 6.5]{CaMa06b}. Note that our  polynomials $f_1 \klk f_m$ can be evaluated with $\mathcal{T}:= \mathcal{O}(n\log (d_1 \cdot \cdots \cdot d_m))$ arithmetic operations in $\fq$. Thus, by \cite[Corollary 6.5]{CaMa06b} we can compute an $\fq$-point of $\mathcal{V}$ in time roughly $\mathcal{O}^{\sim}((d_1\cdots d_m)^2\log^2 q)$ with probability of success at least $2/3>1/2$. Here, following the complexity convention of \cite{CaMa06b}, polynomial factors in \(n\) and \(d_m\) are omitted. Whether this complexity estimate can be sharpened for a structured system such as \eqref{eq:waring} is left for future work.

\end{document}